\numberwithin{equation}{section}
\numberwithin{table}{section}
\numberwithin{figure}{section}
\newtheorem{definition}{Definition}[section]
\newtheorem{assumption}{Assumption}[section]
\newtheorem{lemma}{Lemma}[section]
\newtheorem{proposition}{Proposition}[section]
\newtheorem{theorem}{Theorem}[section]
\newtheorem{corollary}{Corollary}[section]
\newcommand{\R}{\mathbb{R}} 
\newcommand{\e}{\mathbf{e}} 
\newcommand{\fl}[1]{\lfloor{#1}\rfloor} 
\newcommand{\cl}[1]{\lceil{#1}\rceil} 
\newcommand{\id}[1]{{\mathbf 1}_{\{{#1}\}}} 
\newcommand{\dist}{\rho} 
\newcommand{\up}[1]{{#1}^{\uparrow}} 
\newcommand{\down}[1]{{#1}^{\downarrow}} 
\newcommand{\pr}{\mathbb{P}} 
\newcommand{\E}{\mathbb{E}} 
\newcommand{\prob}{\mathbb{P}} 
\newcommand{\0}{\mathbf{0}} 
\newcommand{\spt}{\tau} 
\newcommand{\ts}{\sigma} 
\newcommand{\arp}{\mathcal{X}} 
\newcommand{\ja}{\xi} 
\newcommand{\lt}{\ell} 
\newcommand{\ulb}{\up w_\0(\lt)} 
\newcommand{\uub}{\down w_\0(\lt)} 
\title{Instantaneous Control of Brownian Motion \\
with a Positive Lead Time}
\author{Zhen Xu, Jiheng Zhang, and Rachel Q. Zhang \\
The Hong Kong University of Science and Technology \\
}
\begin{document}

\maketitle

\begin{abstract}
Consider a storage system where the content is driven by a Brownian motion absent  control.
At any time, one may increase or decrease the content at a cost proportional to the amount of adjustment.
A decrease of the content takes effect immediately, while an increase is realized after a fixed lead time $\lt$.
Holding costs are incurred continuously over time and are a convex function of the content. 
The objective is to find a control policy that minimizes the expected present value of the total costs.
Due to the positive lead time for upward adjustments, one needs to keep track of all the outstanding upward adjustments as well as the actual content at time $t$ as there may also be downward adjustments during $[t,t+\lt)$, i.e., the state of the system is a function on $[0,\ell]$.  
To the best of our knowledge, this is the first paper to study instantaneous control of  stochastic systems in such a functional setting.  
We first extend the concept of $L^\natural$-convexity to function spaces and establish the $L^\natural$-convexity of the optimal cost function. 
We then derive various properties of the cost function and identify the structure of the optimal policy as a state-dependent two-sided reflection mapping making the minimum amount of adjustment necessary to keep the system states within a certain region.  
\end{abstract}

\section{Introduction}
\label{sec:introduction}

Consider a storage system, such as an inventory or cash fund, whose content fluctuates according to a Brownian motion absent control. 
A convex holding cost of the content is incurred continuously.
At any time, a controller may initiate an upward adjustment to increase the content, which is realized after a lead time, and/or a downward adjustment to decrease the content, which takes effect immediately. 
Both upward and downward adjustments incur a variable cost. 
The objective is to find a control policy that minimizes the expected discounted cost over an infinite planning horizon. 
 
Absent the lead time, the state of the problem is one dimensional, and \cite{HarrisonTaksar1978,HarrisonTaksar1983} show that an optimal control policy can be characterized by two closed-form control limits. 
The method used to analyze the problem is referred to as a lower-bound approach by \cite{DaiYao2013a} and involves three steps. 
(1) Based on the optimality equations, heuristically derive some differential inequalities of the optimal cost function, with at least one equation being tight. 
This is know as the Hamilton-Jacobi-Bellman (HJB) equation.
(2) For a control limit policy, first obtain a set of ordinary differential equations (ODEs) of the cost function and then solve those equations. 
(3) Find the control limits under which the cost function is continuously differentiable and hence optimal.

The problem becomes much more complicated, however, when there is a positive lead time $\lt$ for upward adjustments. 
This is because the on-hand inventory at $t+\lt$ cannot be predicted solely from the inventory position  at any time $t$  as there may be downward adjustments in $[t,t+\lt)$. 
One needs to keep track of the amount and timing of each outstanding upward adjustment as well as the content on-hand at any time, or the state of the system is a function on $[0,\lt]$.
Thus, step (2) of the lower bound approach  will only result in partial differential equations (PDEs) with an uncountable number of unknown boundary conditions, which are almost impossible to solve.

To derive and prove the structure of the optimal control policy in the presence of a positive lead time, we follow step (1) to heuristically derive an HJB equation based on two optimality conditions, optimizing the timing and amounts of adjustments, respectively. 
The similarity between our analysis and the lower bound approach in \cite{HarrisonTaksar1983} stops here and we proceed with the following steps, each of which involving challenging and deep mathematical analysis.  
(2) Extend the concept of  $L^\natural$-convexity defined on finite dimensional spaces and introduced by \cite{Murota2005} to a function space, and show that the optimal cost function is the limit of the costs of a series of periodic review systems and hence is $L^\natural$-convex in our state space.
This is one of the key steps in our analysis and a fundamental building block.
(3) Derive some properties of the optimal cost function using the $L^\natural$-convexity of the cost function, and identify two sets of states in which an upward and a downward adjustment are needed, respectively. 
These two sets also reveal the boundaries of the PDEs for the HJB equation.
(4) Construct a state dependent two-sided reflection policy making the minimum amount of upward or downward adjustment necessary to prevent the state from entering into the two sets and prove it is optimal. 
Such a policy is much more complicated than that in \cite{HarrisonTaksar1983} and the proof of its optimality  requires the establishment of properties such as the monotonicity, Lipschtiz continuity, and complementarity of the policy.

To the best of our knowledge, this is the first paper to consider instantaneous control of stochastic systems where the state is a function on a continuous time interval.
Existing methods can only deal with systems with single dimensional states, e.g., zero lead time for both upward and downward adjustments in our problem. 
For periodic control problems, except for those with states of one or two dimensions, the common approach is to establish the $L^\natural$-convexity of the optimal cost function, with which a threshold policy can be easily shown to be optimal.  
Such an approach cannot be applied directly to problems with instantaneous control as $L^\natural$-convexity is only defined on finite dimensional spaces. 
As one can see, identifying the optimal policy is nontrivial even after extending and applying the concept of $L^\natural$-convexity to a function space (i.e., step (2)), and requires additional challenging steps, i.e, steps (3) - (4) mentioned above. 
     
The remainder of this paper is organized as follows.
In $\S 2$, we provide a brief summary of relevant literature. 
In $\S 3$, we present a precise mathematical formulation of the Brownian control problem. 
We then derive two optimality conditions and provide a heuristic derivation of an HJB equation.  
In $\S 4$, we extend the concept of $L^\natural$-convexity to a function space, and show that the optimal cost function is the limit of the costs of a series of periodic review systems and hence is $L^\natural$-convex.
In $\S 5$, we provide various properties of the optimal cost function, which lead to the optimal control being a state-dependent two-sided reflection policy in $\S 6$. 
We discuss the general case with positive lead times for both upward and downward adjustments in $\S 7$. 

\section{Literature Review}
Research on the stochastic control of Brownian motion dates back to \cite{Bather1966} and the early work was aimed at minimizing the total expected discounted costs. 
\cite{ConstantinidesRichard1978} show that a control band policy is optimal when there is a fixed cost for upward and downward adjustments and \cite{HST1983} develop a method to find the optimal bands. 
\cite{Davis1993} and \cite{OksendalSulem2009} show the equivalence of this control problem to a sequence of optimal stopping problems. 
All of these papers assume that the holding cost is linear. \cite{DaiYao2013b} extend this work to a general convex holding cost function. 
\cite{HarrisonTaksar1978,HarrisonTaksar1983} prove that a control limit policy is optimal absent fixed costs under linear and convex holding costs, respectively, and the latter also provides a procedure for computing the optimal limits. 
The methodology used in these papers is the three-step approach described in the Introduction.  
Later, these policies are shown to be optimal also under the average cost criteria by  \cite{ODV2008} and \cite{DaiYao2013a} with fixed costs when the holding cost is linear and convex, respectively,  and by \cite{Taksar1985} without a fixed cost.
 
Note that all of the abovementioned work assumes away a positive lead time for upward or downward adjustments, except \cite{OksendalSulem2009} which show that, with some additional assumptions which will be discussed in Section~\ref{sec:discussion}, the problem where the lead times for upward and downward adjustments are the same can be reduced to one with  zero lead times.      
 
Since the state in our problem is on a function space, the literature on  $L^\natural$-convexity which extends convexity to multiple dimensions is also relevant. 
We refer to \cite{Zipkin2008} for an excellent summary of the development of the concept and its application in inventory management.
By establishing the  $L^\natural$-convexity of the optimal cost function, \cite{Zipkin2008}  develops a new approach to the structural analysis of the standard, single-item, lost-sales inventory system with a linear ordering cost and a positive replenishment lead time. 
This concept is also used in the structural analysis of problems where the state is of a finite dimension, e.g., inventory-pricing control with lead times (\cite{PCF2012}) and perishable inventory systems (\cite{CPP2014}). 
In our paper, we will extend $L^\natural$-convexity to a function space.  

The two-sided reflection policy shown to be optimal for our problem is inspired by  \cite{Skorokhod1961} and \cite{Skorokhod1962} which solve the stochastic differential equation for a reflecting Brownian motion.
The idea of the reflection mapping is widely used in the study of queueing systems. 
For example, \cite{HarrisonReiman1981} and \cite{Reiman1984} obtain the heavy-traffic limits for some open queueing network using multidimensional reflection mappings. 
We refer to \cite{ChenYao2001} and \cite{Whitt2002} for more in-depth knowledge about reflection mappings.

\section{Model Description}
In this section, we formulate the problem mathematically and heuristically derive the Hamilton-Jacobi-Bellman(HJB) equation. 

\subsection{Problem Formulation}
\label{sec:model-formulation}
 \subsubsection{Modeling Details}
Let $\Omega$ be the set of all continuous functions $\omega:[0,\infty) \rightarrow \R$, and $W_t:\Omega \rightarrow \R$ be the coordinate projection map $W_t(\omega) = \omega(t)$ for $t\geq0$. 
Denote by $\mathscr{F} =\sigma (W_t,t\geq 0)$ the smallest $\sigma$-field such that  $W_t$ is $\mathscr{F}$-measurable and $\mathscr{F}_t= \sigma(W_s, 0\leq s\leq t)$ for each $t\geq0$. 
Also let $\pr$ be the unique probability measure on $(\Omega,\mathscr{F})$ such that $W_t$ is a Brownian motion with drift $\mu$ and  variance $\sigma^2$, and $\E$ be the associated expectation operator.

Now consider a storage system, such as an inventory or bank account, whose content $H_t$, $t\geq 0$, fluctuates according to a Brownian motion $W_t$ with drift $\mu$ and variance $\sigma^2$, without any control.
Holding costs are incurred continuously at the rate $h(H_t)$ where $h$ is convex with $h(0)=0$. At any time, we may take an action to cause the storage level to jump by a positive amount after a fixed lead time $\lt$ or by a negative amount which takes effect immediately. 
An upward adjustment incurs a variable cost $\up k$, while a downward adjustment incurs a variable cost $\down k$. 
Thus, the cost for an upward $\up \xi$ and/or downward $\down \xi$ adjustment at any given time is given by 
\begin{equation}
  \label{eq:control-cost}
  \phi(\up \ja,\down \xi)= \up k \up \xi + \down k \down \xi.
\end{equation}

When $\lt=0$, the problem reduces to that in \cite{HarrisonTaksar1983}. 
With a positive lead time for upward adjustments, the problem becomes much more complicated for the following reasons. 
(i) As instantaneous downward adjustments can occur at any time, by itself the {\it inventory position} at any time $t$ cannot predict the content on-hand and hence the expected holding cost at time $t+\lt$.  
One needs to keep track of all the upward adjustments that will be realized in $[t,t+\ell)$, or a profile of outstanding upward adjustments. 
(ii) With continuous time, such a  profile is a function on $[0,\ell]$. 
Dynamic control with infinite dimensional state variables is well known to be extremely challenging and there has been little work in the literature. Next, we define the state and decision variables, and provide the system dynamics of the problem.   

\begin{enumerate}
\item The state variables: Let $\arp_t(u) \in \R$ be the content of the system plus the total amount of outstanding upward adjustments at time $t$ that will be realized by $t+u$. 
Then, $\arp_t(0)$ is simply the content of the system at time $t$. 
For technical purposes, we include $\arp_t(u)=\arp_t(\lt)$ for $u>\lt$ in our state. 
Thus, $\arp_t(u), u \geq 0$, is right-continuous, non-decreasing and constant for $u \geq \lt$.  

Let $\arp_t=\{\arp_t(u),u\ge 0\}$ be the state of the system at time $t$ and $\mathbb{D}$  be the set of all possible states. 
That is, $\mathbb{D}$ is the set of all functions on $\R_+$ with the following properties: (1) right-continuous
on $[0,\infty)$ with  left limits in $(0,\infty)$, and (2) non-decreasing. 
For convenience, we denote $\mathcal I=\{ \mathcal I(u)=1, u\geq 0\} \in \mathbb D$ and $\arp+a=\{\arp(u)+a,u \geq 0\} \in \mathbb D$ for $a \in \R$.  

\item The decision variables: Let $\up Y(t)$ and $\down Y(t)$ be stochastic processes adapted to the filtration $\mathscr{F}_t$ for all $t \ge 0$, representing the cumulative upward and downward adjustments up to time $t$, respectively.  
Thus, $\up Y$ and $\down Y$ are non-decreasing functions.
For convenience, let $\pi=(\up{Y},\down{Y})=\{(\up Y(t),\down Y(t)): t \geq 0\}$ represent a control policy over the planning horizon such  that any control at time $t$
is based on information that has been revealed up to $t$. 
 
\item The system dynamics: For $t>0$,
\begin{equation}
  \label{equ:dynamics}
  \arp_t(u) =\left\{ \begin{array} {ll}
  \arp_{0}(u+t) + W_{t}+\up{Y}(t+u-\lt)-\down{Y}(t), & u \le \lt,\\
  \arp_t(\lt), & u> \lt.
  \end{array}\right.
\end{equation} 
That is, apart from $W_t$, $\arp_t(u)$ includes the content at time $0$ plus the upward adjustments made before time $t+u-\lt$ if $u \leq \lt$ or $t$ otherwise, minus the downward adjustments made up to $t$.  
When $u \leq \ell$, $\arp_{0}(u+t)$ is the content of the system at time $0$ plus the upward adjustments made before time $0$ that will be realized by $t+u$. 
Among the upward adjustments made during $[0,t)$, only those made before $t+u-\ell$ will be realized by time $t+u$, which is $\up Y(t+u-\lt)$.
Thus, the content on hand at $t$ can be written as $H_t=\arp_{0}(t) + W_{t}+\up{Y}(t-\lt)-\down{Y}(t)$.
\end{enumerate}

\subsubsection{The Cost Function}
For any given policy $\pi$ and initial state $\arp\in\mathbb{D}$, the total expected cost can be written as 
\begin{equation}
  \label{eq:trans_cost_function}
  C(\arp,\pi)
  =\E\left[
    \int_0^{\infty}e^{-\gamma t}h(\arp_t(0)) dt 
     + \int_0^{\infty}e^{-\gamma t}
        (\up k d \up Y(t)+\down k d \down Y(t))
  \right],
\end{equation}
where $\gamma$ is the discount rate. 
We impose the following mild assumptions on the holding cost function for the rest of this paper. 

\begin{assumption}
  \label{assum:orig-h}
The holding cost function $h:\R \to \R^+$ satisfies the following conditions: $(1)$ $h(\cdot)$ is convex and piece-wise $C^2$-continuous; 
$(2)$ $h(0)=0$; and
$(3)$ there exists $M>0$ such that $|h'(\cdot)|\leq M$.
\end{assumption}

Parts (1) and (2) of Assumption \ref{assum:orig-h} guarantee that it is never optimal to make a downward adjustment exceeding the available content at any time. 
Without loss of generality, we will only consider feasible policies that result in finite control costs, i.e., 
\begin{equation}
\label{equ:feasiblecondition}
  \E\left[\int_0^{\infty}e^{-\gamma t} (d \up Y(t)+ d \down Y(t))\right] < \infty.
\end{equation}
Thus, under Assumption~\ref{assum:orig-h}, a policy $\pi$ is feasible if and only if $C(\arp,\pi)$ is finite.
Denote by $\Pi$  the set of all such control policies and by $C^*(\arp)=\inf\limits_{\pi\in\Pi}\{C(\arp,\pi)\}$  the optimal cost.

The following proposition shows that the optimal cost $C^*(\arp)$ is Lipschitz
continuous on $\mathbb{D}$. 
All the proofs in the paper are either in the main body or can be found in the Appendix. Since the states are functions, we define the distance between two states $\arp $ and $\arp'\in\mathbb{D}$ as $d(\arp,\arp')=\int_0^{\infty} e^{-\gamma t}|\arp(t)-\arp'(t)|dt$. 
It is easy to see that the space $\mathbb{D}$ is a complete metric space under the distance $d(\cdot,\cdot)$. 
\begin{proposition}
  \label{prop:Lips-con of C^*}
Under Assumption~\ref{assum:orig-h}, $C^*(\arp)$ is Lipschitz continuous.
That is, for any states $\arp$ and $\arp'$, $|C^*(\arp)-C^*(\arp')| \leq Md(\arp,\arp')$.
\end{proposition}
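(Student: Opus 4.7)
The plan is to prove Lipschitz continuity by a coupling argument: take the same Brownian path $W$ and the same admissible control $\pi = (\up Y, \down Y)$ for two systems with different initial states $\arp$ and $\arp'$, and compare their costs term by term.

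First I would use the dynamics \eqref{equ:dynamics} directly. Under the same $W$ and the same policy $\pi$, the content processes at the head of the queue ($u=0$) satisfy
\begin{equation*}
\arp_t(0) - \arp'_t(0) = \arp_0(t) - \arp'_0(t), \qquad t \ge 0,
\end{equation*}
since the $W_t$, $\up Y(t-\lt)$, $\down Y(t)$ terms cancel. In particular this difference is deterministic. Next, using the mean-value theorem together with part (3) of Assumption~\ref{assum:orig-h} ($|h'| \le M$),
\begin{equation*}
|h(\arp_t(0)) - h(\arp'_t(0))| \le M |\arp_t(0) - \arp'_t(0)| = M|\arp_0(t) - \arp'_0(t)|.
\end{equation*}
Since the control processes $\up Y$ and $\down Y$ are identical in the two systems, the variable-cost terms in \eqref{eq:trans_cost_function} cancel exactly. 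Taking expectations and using Fubini,
\begin{equation*}
|C(\arp,\pi) - C(\arp',\pi)| \le \int_0^\infty e^{-\gamma t} M|\arp_0(t) - \arp'_0(t)|\,dt = M\,d(\arp,\arp').
\end{equation*}

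Next I would upgrade this per-policy bound to the optimal-cost bound. For any $\varepsilon > 0$, pick $\pi \in \Pi$ with $C(\arp',\pi) \le C^*(\arp') + \varepsilon$. The previous display shows $C(\arp,\pi) \le C(\arp',\pi) + Md(\arp,\arp') < \infty$, so $\pi \in \Pi$ for initial state $\arp$ as well (recall $\Pi$ is characterized by \eqref{equ:feasiblecondition}, which is a condition on $\pi$ alone and therefore preserved under the coupling). Hence
\begin{equation*}
C^*(\arp) \le C(\arp,\pi) \le C^*(\arp') + \varepsilon + Md(\arp,\arp').
\end{equation*}
Letting $\varepsilon \downarrow 0$ and then swapping the roles of $\arp$ and $\arp'$ gives $|C^*(\arp) - C^*(\arp')| \le Md(\arp,\arp')$.

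I do not anticipate any serious obstacle. The only points requiring care are (i) checking that the filtration under which $\pi$ is adapted is the same for both systems (which it is, since the driving Brownian motion is common and the initial states are deterministic), and (ii) verifying that the near-optimal policy transferred across systems remains in $\Pi$ — which follows because feasibility is defined purely through the cumulative adjustments in \eqref{equ:feasiblecondition}, independent of the initial state. The exponential weighting in $d(\cdot,\cdot)$ matches the discount factor $e^{-\gamma t}$ in the cost, which is precisely why this metric yields a clean Lipschitz constant equal to the derivative bound $M$.
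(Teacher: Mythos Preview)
Your proof is correct and follows essentially the same coupling argument as the paper: apply a near-optimal policy for one initial state to the other, observe that under identical $W$ and $(\up Y,\down Y)$ the content difference $\arp_t(0)-\arp'_t(0)$ reduces to $\arp(t)-\arp'(t)$, bound the holding-cost difference by $M|\arp(t)-\arp'(t)|$, and conclude by symmetry after letting $\varepsilon\to 0$. Your write-up is slightly more careful than the paper's (you explicitly address feasibility of the transferred policy and the adaptedness issue), but the route is the same.
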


\subsection{Heuristic Derivation of the Hamilton-Jacobi-Bellman(HJB) Equation}
\label{sec:Heuristic derivation of the optimality equation}

We first note that, for any given initial state $\arp \in \mathbb D$, the optimal cost should satisfy the following optimality conditions: 
\begin{eqnarray}
C^*(\arp) &=& \inf\limits_{\up \xi \ge 0, \;\down\xi \ge 0}
    \left\{
    \phi(\up\xi, \down\xi)+C^*(\Phi_{\up\xi,\down\xi}(\arp))
    \right\}, \label{optimality1} \\
C^*(\arp) &=&  \inf_{s\geq0} \left\{\E
    \left[
    \int_0^{s}e^{-\gamma u}h(\arp(u)+W_u)du+e^{-\gamma s}C^*(\ts_s(\arp)+W_s)
\right] \right\}, \label{optimality2}
    \end{eqnarray}
where $s$ is a stopping time and
\begin{eqnarray}
  \Phi_{\up \ja,\down \xi}(\arp) 
  &=&\left\{\arp(u)-\down \xi+ \up \xi {\bf 1}_{\{u \geq \ell\}}:u\geq 0
\right\}, \label{Phi} \\
  \ts_s(\arp)  &=& \{\arp(s+u),u\geq 0\} \nonumber
\end{eqnarray}
are the states after  an adjustment $(\up \xi,\down \xi)$ is made and after
a period
of time $s$ with no adjustment for a given initial state $\arp$, respectively.
Let
\begin{equation}
  \label{eq:cost-with-control}
  C(\arp,\up \xi,\down \xi)=\phi(\up\xi, \down\xi)+C^*(\Phi_{\up\xi,\down\xi}(\arp)) 
\end{equation}
be the minimum cost under a given adjustment $(\up\xi,\down \xi)$.
Assume for now that 
$\frac{\partial C(\arp,\up \xi,\down \xi)}{\partial \up \xi}$ and  $\frac{\partial C(\arp,\up \xi,\down \xi)}{\partial \down \xi}$ exist, which we will prove later. 
Then, with a small amount of adjustment $\epsilon$,  
\begin{eqnarray}
  \label{eq:heuristic-1}
  C(\arp,\epsilon,0)&=&C^*(\arp)+\frac{\partial C(\arp,0,0)}{\partial \up\ja}
\epsilon+o(\epsilon),\\ 
  \label{eq:heuristic-2}
  C(\arp,0,\epsilon) &=& C^*(\arp)+\frac{\partial C(\arp,0,0)}{\partial \down\ja}
\epsilon+o(\epsilon). 
\end{eqnarray}
If $(\up\xi,\down \xi)=(0,0)$, i.e., no adjustment is made at time $0$, absent  further adjustment, the state at time $s>0$ becomes $\ts_s(\arp)+w$ for any realization of $W_s=w$. We define
\begin{equation}
  \label{eq:cost-no-control}
  V_{\arp}(w,s)=C^{*}\left(\ts_s(\arp)+w\right).
\end{equation} 
If no adjustment is made for $\epsilon$ amount of time, then, by Ito's formula, the minimum expected discounted cost becomes
\begin{align}
\E\left[\int_0^{\epsilon} e^{- \gamma t} h(\arp_t(0)) dt + e^{- \gamma \epsilon} V_{\arp}(W_{\epsilon},\epsilon)\right]
= C^*(\arp)
   +[\Gamma V_{\arp}(0,0)-\gamma V_{\arp}(0,0)+h(\arp(0))]\epsilon+o(\epsilon) \label{eq:heuristic-3}
\end{align}
where the operator $\Gamma=\frac{\partial}{\partial s}+\frac{\sigma^2}{2}\frac{\partial^2}{\partial w^2}+\mu\frac{\partial}{\partial w}$.
Thus, for any given $\arp$,
\begin{equation}
  \label{eq:optimality-heuristic}
  \big[
  \Gamma V_{\arp}(0,0)-\gamma V_{\arp}(0,0)+h(\arp(0))
  \big]
  \vee \frac{\partial C(\arp,0,0)}{\partial \up\ja}
  \vee \frac{\partial C(\arp,0,0)}{\partial \down\ja}=0
\end{equation}
which is precisely the HJB equation.
This equation involves a PDE with an uncountable number of unknown boundary conditions and no known method is available to solve it directly. 
Instead, we will solve the problem by first establishing the $L^\natural$-convexity of the optimal cost function on function spaces.
And we will give a solution to the HJB equation in Theorem~\ref{thm:HJB-equation}.
\section{The $L^\natural$-convexity of the Optimal Cost Function} 
\label{sec:L-convex}

Since the concept of $L^\natural$-convexity is defined on $\R^n$, we first study a periodic version of the  problem. 
We then extend the concept of $L^\natural$-convexity from $\R^n$ to $\mathbb D$ by linking the problem to the limit of a series of periodic problems.

\subsection{A Periodic Review System}
\label{sec:periodic}
Consider a periodic review of the system with period length  $\frac{\ell}{n}$,  i.e., an upward adjustment takes $n$ periods. 
In such a system, the state in any period is an $n$-dimensional vector denoted by 
$\mathbf{x}_t=(x_{t,0},x_{t,1},\ldots, x_{t,n-1})$ where $x_{t,0}$ is the current content of the system and $x_{t,i}$, $1 \leq i \leq n-1$, is the content of the system plus the total outstanding upward movement that will be realized from period $t+1$ to $t+i$. 
Letting $\up y_t$ and $\down y_t$ be the upward and downward adjustments in period $t$, we obtain the following dynamics:
\begin{equation}
  \label{eq:dynamics-periodic}
  \mathbf{x}_{t+1} = (x_{t,1},x_{t,2},\cdots,x_{t,n-1},x_{t,n-1}+\up y_t)-\down y _t\e+w_t \e
\end{equation}
where $\e$ is a vector of all 1's whose dimension will be clear from the context and $w_t=W_{\frac{(t+1)\lt}{n}}-W_{\frac{t\lt}{n}}$ is the random change caused by the Brownian motion.
Let $\mathcal{N}_a$ represent a normally distributed random variable with mean $a \mu$ and variance $a \sigma^2$ for any $a>0$. 
Then, the discount rate becomes $\alpha=e^{-\gamma \frac{\lt}{n}}$ and holding cost is given by $h^n(x)=\E\left[\int_0^{\frac{\lt}{n}} e^{- \gamma s} h\left(x+\mathcal{N}_{\frac{\lt}{n}}\right)ds\right]$ in the periodic system.

Next, we present definitions where the concept of $L^\natural$-convexity can be found in \cite{Zipkin2008}, and show that the optimal cost function for the periodic system is $L^\natural$-convex.
\begin{definition}
Let $f$ be a function on $\R^n$. 
\begin{enumerate}
\item\ $f$ is submodular if for any $\mathbf{x}_1,\mathbf{x}_2\in\R^n$,
$f(\mathbf{x}_1)+f(\mathbf{x}_2)\geq f(\mathbf{x}_1\vee \mathbf{x}_2)+f(\mathbf{x}_1\wedge\mathbf{x}_2)$.
\item $f$ is $L^\natural$-convex if the function $g(\mathbf{x},\xi)=f(\mathbf{x}-\xi\e)$ is submodular in $\R^{n+1}$.
\end{enumerate}
\end{definition}
Thus, a function $f$ is $L^\natural$-convex if and only if, for any $\mathbf{x}_1,\mathbf{x}_2\in\R^n$ and  $\xi_1,\xi_2 \in\R,$
\[  f(\mathbf{x}_1-\xi_1 \e)+f(\mathbf{x}_2-\xi_2 \e)\geq
    f(\mathbf{x}_1\vee \mathbf{x}_2-(\xi_1 \vee \xi_2)\e)
    +f(\mathbf{x}_1 \wedge \mathbf{x}_2-(\xi_1 \wedge \xi_2)\e).
\]
To show the $L^\natural$-convexity of the optimal cost function for the periodic system, we define $C^{T,n}_t(\mathbf{x}_t)$ as the optimal cost function from period $t$ to $T$ for a given $(T,n)$ and state $\mathbf{x}_t$. 
Then,
\begin{eqnarray}
C^{T,n}_t(\mathbf{x}_t)=\min_{\up y_t, \down y_t\geq0}\left\{c^{T,n}_t(\mathbf{x}_t,\up
y_t,\down y_t) \right\} \nonumber,
\end{eqnarray}
where
\begin{equation*}
 c^{T,n}_t(\mathbf{x}_t,\up y_t,\down y_t)=\up k \up y_t +\down k \down y_t
+\alpha \E \left[C^{T,n}_{t+1}(\mathbf{x}_{t+1})+h^n(x_{t,0}- \down y_t)\right]
\end{equation*}
for $0 \leq t \leq T-1$ and $C^{T,n}_T(\mathbf{x}_T)=0$.

\begin{proposition}
\label{prop:midstep-convex}
$c^{T,n}_t(\mathbf{x}_,\up y_,\down y)$ is $L^\natural$-convex in 
$(\mathbf{x}_,x_{n},\down y)$ and $C^{T,n}_t(\mathbf{x})$ is $L^\natural$-convex in $\mathbf{x}$.
\end{proposition}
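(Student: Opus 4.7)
The plan is to proceed by backward induction on $t$, from $t=T$ down to $t=0$. The base case $t=T$ is immediate since $C^{T,n}_T \equiv 0$. For the inductive step, I assume $C^{T,n}_{t+1}$ is $L^\natural$-convex on $\R^n$; I would first establish the $L^\natural$-convexity of $c^{T,n}_t$ in $(\mathbf{x}_t, x_{t,n}, \down y_t)$, then deduce the $L^\natural$-convexity of $C^{T,n}_t$ by partial minimization. The key conceptual step is the change of variable $x_{t,n} := x_{t,n-1} + \up y_t$, which converts the feasibility constraint $\up y_t \geq 0$ into $x_{t,n} \geq x_{t,n-1}$ and rewrites \eqref{eq:dynamics-periodic} as
\[
\mathbf{x}_{t+1} = (x_{t,1}, x_{t,2}, \ldots, x_{t,n-1}, x_{t,n}) - \down y_t \e + w_t \e,
\]
so that $\mathbf{x}_{t+1}$ is obtained from the augmented state $(\mathbf{x}_t, x_{t,n})$ by discarding the first coordinate $x_{t,0}$ and then translating the remaining coordinates by the common scalar $w_t - \down y_t$.

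With this reparameterization, $c^{T,n}_t(\mathbf{x}_t, x_{t,n}, \down y_t)$ splits into four pieces whose $L^\natural$-convexity I would verify separately. The term $\up k(x_{t,n} - x_{t,n-1}) + \down k \down y_t$ is linear. The holding-cost term $\alpha h^n(x_{t,0} - \down y_t)$ has the form $\phi(a-b)$ with $\phi = \alpha h^n$ univariate convex (an integral of convex functions); a direct submodularity check via a majorization argument shows that any such ``convex-of-a-difference'' function is $L^\natural$-convex in the two active coordinates, and extending by independent dummy coordinates preserves this. For the continuation term $\alpha \E[C^{T,n}_{t+1}(\mathbf{x}_{t+1})]$ I would chain four elementary $L^\natural$-preserving operations applied to the induction hypothesis: (a) the assignment $(\mathbf{z}, \xi) \mapsto C^{T,n}_{t+1}(\mathbf{z} - \xi \e)$ is itself $L^\natural$-convex in $(\mathbf{z}, \xi)$, which is essentially the definition of $L^\natural$-convexity; (b) composition with the projection $(\mathbf{x}_t, x_{t,n}) \mapsto (x_{t,1}, \ldots, x_{t,n})$ is harmless since $L^\natural$-convexity is preserved under the addition of independent coordinates; (c) translation by the constant $w_t$ preserves $L^\natural$-convexity; and (d) the outer expectation over $w_t$ is a nonnegative mixture and hence preserves $L^\natural$-convexity. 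Summing the four $L^\natural$-convex pieces gives the first assertion.

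For the second assertion I would write
\[
C^{T,n}_t(\mathbf{x}_t) = \min_{x_{t,n} \geq x_{t,n-1},\; \down y_t \geq 0} c^{T,n}_t(\mathbf{x}_t, x_{t,n}, \down y_t),
\]
adjoin to $c^{T,n}_t$ the $+\infty$-valued indicators of the two constraint sets, and check that both indicators are $L^\natural$-convex: the indicator of $\{\down y \geq 0\}$ is a univariate convex indicator, and the indicator of $\{x_{t,n} \geq x_{t,n-1}\}$ depends only on $x_{t,n} - x_{t,n-1}$, so its submodularity follows from monotonicity of componentwise $\vee$ and $\wedge$. The standard fact that partial minimization of an $L^\natural$-convex function over any subset of its coordinates is again $L^\natural$-convex (see \cite{Murota2005,Zipkin2008}) then yields the $L^\natural$-convexity of $C^{T,n}_t$, closing the induction.

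The main obstacle I anticipate lies in the continuation term. In the original variables the last coordinate of $\mathbf{x}_{t+1}$ is $x_{t,n-1} + \up y_t - \down y_t + w_t$, which couples the existing coordinate $x_{t,n-1}$ with the scalar control $\up y_t$ on top of the common shift, and it is not clear that $L^\natural$-convexity survives this composite transformation. The reparameterization $x_{t,n} := x_{t,n-1} + \up y_t$ is precisely what turns the coupling into a clean projection followed by a common-scalar translation, both of which are standard $L^\natural$-preserving operations; identifying this reparameterization is what lets the induction close by routine decomposition rather than by an ad hoc submodularity calculation, and it is also what motivates the unusual indexing $(\mathbf{x}, x_n, \down y)$ in the statement.
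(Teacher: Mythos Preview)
Your proposal is correct and follows essentially the same approach as the paper: backward induction with the key reparameterization $x_{t,n}:=x_{t,n-1}+\up y_t$, then verification that each summand of $c^{T,n}_t$ is $L^\natural$-convex in $(\mathbf{x}_t,x_{t,n},\down y_t)$, followed by partial minimization over the sublattice $\{x_{t,n}\ge x_{t,n-1},\ \down y_t\ge 0\}$. The paper's version is terser---it invokes Lemmas~1 and~2 of \cite{Zipkin2008} directly rather than spelling out the decomposition and indicator argument---but the structure and the decisive change-of-variable are identical.
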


By Theorem~6.2.3 of \cite{Puterman1994} $
  C^{\infty,n}(\mathbf{x}) := \lim\limits_{T\rightarrow \infty}\left\{C^{T,n}_0(\mathbf{x})\right\}<\infty$ is the unique solution to the optimality equation $C^{\infty,n}(\mathbf{x})=\min\limits_{\up y, \down y\geq0}\left\{c^{\infty,n}(\mathbf{x},\up y,\down y) \right\}$ where
\begin{equation*}
c^{\infty,n}(\mathbf{x},\up y,\down y)=\up k \up y +\down k \down y
+\alpha \E\left[
C^{\infty,n}((x_{1},x_{2},\cdots,x_{n-1},x_{n-1}+\up y)-\down y\e+w_t \e)+h^n(x_{0}-\down y)
\right]
  \end{equation*}
and hence we have the following theorem.
\begin{theorem}
  \label{theo:c is L-convex}
$C^{\infty,n}(\mathbf{x})$ is $L^\natural$-convex and hence the optimal cost for the infinite horizon periodic review system for any given $n$.
\end{theorem}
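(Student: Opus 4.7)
The plan is to obtain the result as a straightforward corollary of Proposition~\ref{prop:midstep-convex} by passing to the limit. Proposition~\ref{prop:midstep-convex} already gives the $L^\natural$-convexity of each finite-horizon value function $C^{T,n}_0(\mathbf{x})$, so the two remaining tasks are: (i) ensure the pointwise limit $C^{\infty,n}(\mathbf{x}) = \lim_{T\to\infty} C^{T,n}_0(\mathbf{x})$ exists and equals the optimal cost of the infinite-horizon periodic system, and (ii) verify that $L^\natural$-convexity is preserved under this pointwise limit.

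For (i), I would appeal directly to Theorem~6.2.3 of \cite{Puterman1994}, which the paragraph preceding the theorem already invokes: under the discount factor $\alpha = e^{-\gamma \lt / n} < 1$ and the mild holding-cost growth from Assumption~\ref{assum:orig-h}, the finite-horizon values $C^{T,n}_0$ are monotone in $T$ and converge pointwise to the unique bounded-growth fixed point $C^{\infty,n}$ of the Bellman operator, and this fixed point is the optimal infinite-horizon cost. This gives both convergence and the identification of $C^{\infty,n}$ as the value function in one stroke.

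For (ii), I would write out the defining inequality for $L^\natural$-convexity at the finite-horizon level: for any $\mathbf{x}_1,\mathbf{x}_2 \in \R^n$ and $\xi_1,\xi_2 \in \R$,
\[
  C^{T,n}_0(\mathbf{x}_1 - \xi_1 \e) + C^{T,n}_0(\mathbf{x}_2 - \xi_2 \e)
  \geq C^{T,n}_0(\mathbf{x}_1 \vee \mathbf{x}_2 - (\xi_1 \vee \xi_2)\e)
  + C^{T,n}_0(\mathbf{x}_1 \wedge \mathbf{x}_2 - (\xi_1 \wedge \xi_2)\e),
\]
which holds by Proposition~\ref{prop:midstep-convex}. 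Since the inequality is pointwise in the four arguments and the limit exists at each of the four points independently by (i), letting $T \to \infty$ on both sides preserves the inequality and yields the $L^\natural$-convexity of $C^{\infty,n}$.

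I do not anticipate a substantive obstacle: Proposition~\ref{prop:midstep-convex} carries the real analytic content (preservation of $L^\natural$-convexity under the Bellman recursion for the periodic model), and closure of $L^\natural$-convexity under pointwise limits is an immediate consequence of its definition as a system of pointwise inequalities. The only point requiring a line of care is the finiteness of $C^{\infty,n}$, which follows from the discounting together with the bound $|h'|\leq M$ in Assumption~\ref{assum:orig-h} (so that a trivial ``do-nothing'' policy already has finite expected discounted cost), and this is exactly what lets us invoke Puterman's theorem.
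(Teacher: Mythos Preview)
Your proposal is correct and matches the paper's own approach exactly: the paper does not even write out a separate proof, but simply notes (in the paragraph preceding the theorem) that Theorem~6.2.3 of \cite{Puterman1994} gives $C^{\infty,n}=\lim_{T\to\infty}C^{T,n}_0$ as the unique solution to the optimality equation, after which the $L^\natural$-convexity follows by passing the finite-horizon inequality of Proposition~\ref{prop:midstep-convex} to the pointwise limit. Your write-up of steps (i) and (ii) is precisely this argument made explicit.
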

Thus, there exists a unique optimal adjustment $(\up y,\down y)$ for any given $\mathbf x$ and the optimal $\up y (\down y)$ is increasing (decreasing) in $\mathbf x$, where the order of $\mathbf x$ in $\R^n$ is defined in the usual way of componentwise comparison.

\subsection{The Continuous Review System}
\label{sec:L-natural-convexity}

Since the state $\arp$ is defined on $\mathbb{D}$ rather than $\R^n$, we need to extend the concept of $L^\natural$-convexity to $\mathbb{D}$.
The $L^\natural$-convexity of $C^*(\arp)$ will enable us to construct an optimal policy in Section~\ref{sec:optimal-policy-zero}.
\begin{definition}
Suppose that $\arp_1,\arp_2 \in \mathbb{D}$.  
\begin{itemize}
\item Order: $\arp_1 \succeq \arp_2$ if $\arp_1(u) \ge \arp_2(u)$ for any $u \ge 0$,
and $\arp_1 \preceq \arp_2$ if $\arp_1(u) \le \arp_2(u)$ for any $u \ge 0$.
\item Max and Min Operations: $\arp_1 \vee \arp_2 =\{\arp_1(u) \vee \arp_2(u), u \ge 0\}$ and $\arp_1 \wedge \arp_2 =\{\arp_1(u) \wedge \arp_2(u), u \ge 0\}$.
\end{itemize}
\end{definition}
\begin{definition}
\label{def:L-natural-convexity}
A function $F$ on  $\mathbb{D}$ is $L^\natural$-convex if, for any $\arp_1,\arp_2\in \mathbb{D}$ and $\xi_1,\xi_2\in\R$,
\begin{equation*}
  F(\arp_1-\xi_1)+F(\arp_2-\xi_2)
  \geq
  F({\arp_1}\vee{\arp_2}-({\xi_1}\vee{\xi_2}))
  +F({\arp_1}\wedge{\arp_2}-({\xi_1}\wedge{\xi_2})).  
\end{equation*}
\end{definition}

To connect the periodic review systems with our original one, for any given state $\arp$ and policy $\pi$, consider the following discretized state $\arp^n$ and policy $\pi^n$ which makes adjustments only at multiples of $\frac{\lt}{n}$. 
It is easy to see that $\arp^n$ and $\pi^n$ approach  $\arp$ point-wise and $\pi$, respectively, as $n \to \infty$.  
\begin{enumerate}
\item 
The state $\arp^n$ is such that 
\begin{equation}
  \label{equ:continuous-to-discrete}
  \mathcal{X}^n(u) = 
  \left\{ 
    \begin{array} {lll}
      \mathcal{X}(\frac{\lt}{n}), & \mbox{if } 0 \le  u \le \frac{\lt}{n}, \\
      \mathcal{X}(\frac{i\lt}{n}), & \mbox{if }\frac{(i-1)\lt}{n} < u \le \frac{i\lt}{n},\ 
                                    i=2,3,\cdots,n,\\
      \arp(\lt), & \mbox{if }u > \lt.
    \end{array}\right.
\end{equation} 
Let \begin{equation*}
  \mathbf{x}^n
  =\left(
    \mathcal{X}\left(\frac{\lt}{n}\right),
    \mathcal{X}\left(\frac{2\lt}{n}\right),
    \cdots,
    \mathcal{X}\left(\frac{(n-1)\lt}{n}\right),
    \arp(\lt)
  \right).
\end{equation*}

\item The policy $\pi^n=(Y^{n\uparrow},Y^{n\downarrow})$ is such that
\begin{eqnarray}
\label{eqn:periodic-policy}
  Y^{n\uparrow}(t) = \sum_{i=0}^{\fl{\frac{nt}{\lt}}} \xi_i^{n\uparrow} 
  \mbox{ and } 
  Y^{n\downarrow}(t) = \sum_{i=0}^{\fl{\frac{nt}{\lt}}} \xi_i^{n\downarrow}
\end{eqnarray}
where $(\xi_0^{n\uparrow},\xi_0^{n\downarrow})=(\up Y(0),\down Y(0))$ and $(\xi_i^{n\uparrow},\xi_i^{n\downarrow})=\left(\up Y(\frac{i \lt}{n})-\up Y\left(\frac{(i-1) \lt}{n}\right),\down Y\left(\frac{i \lt}{n}\right)-\down Y\left(\frac{(i-1) \lt}{n}\right)\right)$ for $i=1,2,\cdots$.

\end{enumerate}
Then, the cost of the system for a given $(\arp^n,\pi^n)$ is given by 
\begin{equation}
\label{equ:trans_dis_cost_finction}
  C(\arp^n,\pi^{n})
  =\E\left[
    \int_0^{\infty}e^{-\gamma t} h(\arp^{n}_t(0)) dt 
  + \int_0^{\infty}e^{-\gamma t} (\up k dY^{n\uparrow}(t)+\down k dY^{n\downarrow}(t))
   \right]
\end{equation} 
where $\arp^{n}_t(\cdot)$ is the corresponding state at time $t$ under $\pi^n$ with the initial state $\arp^n$.
By  \eqref{equ:dynamics}, we also have $\arp^{n}_t(0) \to \arp_t(0)$ as $n \to \infty$ for any $t\ge 0$.
It then follows by \eqref{eq:trans_cost_function}, \eqref{equ:trans_dis_cost_finction} and the Lebesgue's dominated convergence theorem that
\begin{equation}
  \label{eq:lebesgue-conv}
  \lim\limits_{n\rightarrow \infty}C(\arp^n,\pi^n) = C(\arp,\pi).  
\end{equation}
It remains to be shown that the optimal cost of the original problem is the limit of the costs of periodic review systems and hence  is $L^\natural$-convexity by Theorem~\ref{theo:c is L-convex}.
\begin{proposition}  
\label{prop:C^n converge to C}
   $C^*(\arp)=\lim\limits_{n\rightarrow+\infty}C^{\infty,n}(\mathbf{x}^n).$
 \end{proposition}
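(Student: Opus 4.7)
The plan is to prove $C^*(\arp) = \lim_n C^{\infty,n}(\mathbf{x}^n)$ by establishing the two matching inequalities $\limsup_n C^{\infty,n}(\mathbf{x}^n) \leq C^*(\arp)$ and $\liminf_n C^{\infty,n}(\mathbf{x}^n) \geq C^*(\arp)$, using the preparatory identity \eqref{eq:lebesgue-conv}, the Lipschitz continuity of $C^*$ (Proposition~\ref{prop:Lips-con of C^*}), and the existence of an optimal periodic policy guaranteed by Theorem~\ref{theo:c is L-convex} together with Puterman's theorem.

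The cornerstone observation is that any periodic policy $\pi^n$ that places its adjustments only at multiples of $\lt/n$ has the same cost in the continuous formulation \eqref{equ:trans_dis_cost_finction} starting from $\arp^n$ as the corresponding control has in the periodic MDP starting from $\mathbf{x}^n$. To see this, split the integral $\int_0^\infty e^{-\gamma t} h(\arp^n_t(0))\,dt$ into the periods $[t\lt/n,(t+1)\lt/n)$, use the strong Markov property of $W$ to replace the Brownian increment on each period by an independent copy, and observe that the resulting holding-cost per period is exactly $\alpha^t h^n(x_{t,0}-\down y_t)$; the adjustment cost decomposes in the same way. The initial-state alignment $x_{0,0}^n = \arp(\lt/n) = \arp^n(0)$ from \eqref{equ:continuous-to-discrete} makes the two formulations start from the same physical state. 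Hence $C(\arp^n,\pi^n)$ coincides with the periodic-system value of $\pi^n$, and in particular $C^{\infty,n}(\mathbf{x}^n)\leq C(\arp^n,\pi^n)$ for every periodic $\pi^n$.

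For the upper bound, fix $\epsilon>0$ and select $\pi\in\Pi$ with $C(\arp,\pi)\le C^*(\arp)+\epsilon$. Its discretization $\pi^n$ from \eqref{eqn:periodic-policy} is a periodic policy, so by the cornerstone observation and \eqref{eq:lebesgue-conv},
\begin{equation*}
\limsup_{n\to\infty} C^{\infty,n}(\mathbf{x}^n) \;\leq\; \limsup_{n\to\infty} C(\arp^n,\pi^n) \;=\; C(\arp,\pi) \;\leq\; C^*(\arp)+\epsilon,
\end{equation*}
and sending $\epsilon\downarrow 0$ closes the bound. For the lower bound, let $\pi^{n*}$ be an optimal stationary policy for the $n$-periodic system starting at $\mathbf{x}^n$, interpreted as a feasible continuous-time policy with initial state $\arp^n$. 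Again by the cornerstone identity, $C(\arp^n,\pi^{n*})=C^{\infty,n}(\mathbf{x}^n)$, so $C^*(\arp^n)\leq C^{\infty,n}(\mathbf{x}^n)$. Since $\arp$ is right-continuous, non-decreasing, and constant on $[\lt,\infty)$, $|\arp^n(u)-\arp(u)|$ is uniformly bounded and converges to $0$ at every continuity point of $\arp$, hence $d(\arp^n,\arp)\to 0$ by dominated convergence; Proposition~\ref{prop:Lips-con of C^*} then yields $C^*(\arp^n)\to C^*(\arp)$, so $\liminf_n C^{\infty,n}(\mathbf{x}^n)\geq C^*(\arp)$.

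The main obstacle is the cornerstone identity: one must verify carefully that the period-wise bookkeeping aligns the discount factor $\alpha^t=e^{-\gamma t\lt/n}$, the approximating cost $h^n$, and the effect of a downward adjustment (which must occur at the beginning of a period in the periodic MDP but is realized instantly in continuous time) without introducing any discrepancy. Everything after that is a clean $\epsilon$-approximation coupled with the Lipschitz estimate from Proposition~\ref{prop:Lips-con of C^*}.
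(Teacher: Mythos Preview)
Your proposal is correct and follows essentially the same two-inequality route as the paper: the upper bound via an $\epsilon$-optimal continuous policy, its discretization, and \eqref{eq:lebesgue-conv}, and the lower bound via the observation that the optimal periodic policy is feasible for the continuous problem so that $C^{\infty,n}(\mathbf{x}^n)\ge C^*(\arp^n)\to C^*(\arp)$. Your write-up is in fact more careful than the paper's---you spell out the ``cornerstone'' identification between the periodic MDP cost and $C(\arp^n,\pi^n)$, and you invoke Proposition~\ref{prop:Lips-con of C^*} explicitly to pass from $C^*(\arp^n)$ to $C^*(\arp)$, whereas the paper compresses the lower bound into a single terse line.
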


\begin{proof}
Since $C^*(\arp)$ is the optimal cost, for any $\epsilon >0$, we can find a policy $\pi$ such that $C(\arp,\pi) < C^*(\arp) + \epsilon$.
On the other hand, as $C^{\infty,n}(\cdot)$ is the optimal cost of the periodic review system, $C^{\infty,n}(\mathbf{x}^n) \le C(\arp^n,\pi^n)$.
Then, we have
\begin{equation}
  \label{equ:c < C+}
\limsup\limits_{n\rightarrow+\infty}C^{\infty,n}(\mathbf{x}^n) 
\le \lim\limits_{n\rightarrow+\infty}C(\arp^n,\pi^n) 
=   C(\arp,\pi) 
<   C^*(\arp) +\epsilon. \nonumber
\end{equation}
As $\epsilon>0$ is arbitrary, we have $\limsup\limits_{n\rightarrow+\infty}C^{\infty,n}(\mathbf{x}^n) \le C^*(\arp)$.
Combined with the fact that \\ $\liminf\limits_{n\rightarrow+\infty}C^{\infty,n}(\mathbf{x}^n) \ge \lim\limits_{n\rightarrow+\infty}C(\arp^n) \ge C^*(\arp)$, we have the result.
\end{proof}

\begin{theorem}
  \label{thm:L-natural}
  The optimal cost $C^*(\arp)$ is $L^\natural$-convex in $\mathbb{D}$.
\end{theorem}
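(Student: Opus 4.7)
The plan is to transfer the finite-dimensional $L^\natural$-convexity from Theorem~\ref{theo:c is L-convex} to the function space $\mathbb{D}$ by approximating via periodic-review systems and then invoking Proposition~\ref{prop:C^n converge to C} in a pointwise limit. Fix arbitrary $\arp_1,\arp_2 \in \mathbb{D}$ and $\xi_1,\xi_2 \in \R$, and let $\mathbf{x}_1^n,\mathbf{x}_2^n \in \R^n$ be the associated discretized vectors built as in \eqref{equ:continuous-to-discrete}.

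First I would verify that discretization commutes with both the lattice operations and with subtraction of a scalar. Since \eqref{equ:continuous-to-discrete} just records $\arp$ at the grid points $\{i\lt/n : 1 \le i \le n\}$, and $\vee$, $\wedge$ and $-\xi$ all act pointwise, the vector discretizations of $\arp_1 \vee \arp_2$, $\arp_1 \wedge \arp_2$ and $\arp_i - \xi_i$ are, respectively, $\mathbf{x}_1^n \vee \mathbf{x}_2^n$, $\mathbf{x}_1^n \wedge \mathbf{x}_2^n$ and $\mathbf{x}_i^n - \xi_i \e$. Note also that $\mathbb{D}$ is closed under each of these operations, since right-continuity and monotonicity are preserved by pointwise max, min, and constant shifts.

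Next, applying Theorem~\ref{theo:c is L-convex} to $\mathbf{x}_1^n,\mathbf{x}_2^n$ with shifts $\xi_1,\xi_2$ gives, for every $n$,
\begin{equation*}
C^{\infty,n}(\mathbf{x}_1^n - \xi_1 \e) + C^{\infty,n}(\mathbf{x}_2^n - \xi_2 \e) \ge C^{\infty,n}(\mathbf{x}_1^n \vee \mathbf{x}_2^n - (\xi_1 \vee \xi_2)\e) + C^{\infty,n}(\mathbf{x}_1^n \wedge \mathbf{x}_2^n - (\xi_1 \wedge \xi_2)\e).
\end{equation*}
Letting $n \to \infty$ and applying Proposition~\ref{prop:C^n converge to C} to each of the four terms individually yields precisely the inequality of Definition~\ref{def:L-natural-convexity} for $C^*$.

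The only subtle point I anticipate is confirming that Proposition~\ref{prop:C^n converge to C} indeed applies to each of the four combined states $\arp_1 - \xi_1$, $\arp_2 - \xi_2$, $\arp_1 \vee \arp_2 - (\xi_1 \vee \xi_2)$ and $\arp_1 \wedge \arp_2 - (\xi_1 \wedge \xi_2)$, not merely to $\arp_1$ and $\arp_2$ separately. This reduces to the closure of $\mathbb{D}$ noted above, which is routine. Beyond that, no further analysis—no HJB argument, no dominated-convergence gymnastics—should be needed. In this sense the theorem is a clean bootstrap of the periodic-review result, with the intellectual weight borne by Theorem~\ref{theo:c is L-convex} and Proposition~\ref{prop:C^n converge to C} rather than this final step.
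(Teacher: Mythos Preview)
Your proposal is correct and follows essentially the same route as the paper: verify that discretization commutes with the lattice operations and scalar shifts, apply the finite-dimensional $L^\natural$-convexity of $C^{\infty,n}$ from Theorem~\ref{theo:c is L-convex}, and pass to the limit using Proposition~\ref{prop:C^n converge to C}. The paper's proof is in fact slightly terser than yours---it treats the commutation step as ``clear'' and does not explicitly flag the closure of $\mathbb{D}$---so your version is, if anything, a bit more careful.
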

\begin{proof}
For any $\arp_1,\arp_2 \in \mathbb{D}$ and their respective $\mathbf{x}^n_1$ and $\mathbf {x}^n_2$, it is clear that $\mathbf{x}^n_1 \vee \mathbf{x}^n_2$ is the vector form of $({\arp_1}\vee{\arp_2})^n=(\arp_1)^n\vee (\arp_2)^n$ and $\mathbf{x}^n_1 \wedge \mathbf{x}^n_2$ is the vector form of $({\arp_1}\wedge{\arp_2})^n=(\arp_1)^n\wedge(\arp_2)^n$. 
For any $\xi_1,\xi_2 \in \R$, by the $L^\natural$-convexity of $C^{\infty,n}(\mathbf{x})$ in Theorem~\ref{theo:c is L-convex},
\begin{equation*}
\label{equ:c^n L-covex}
  C^{\infty,n}(\mathbf{x}_1^n-\xi_1\e)+C^{\infty,n}(\mathbf{x}_2^n-\xi_2\e)\geq
   C^{\infty,n}(\mathbf{x}^n_1 \vee \mathbf{x}^n_2-({\xi_1}\vee{\xi_2})\e)
    +C^{\infty,n}(\mathbf{x}^n_1 \wedge \mathbf{x}^n_2-({\xi_1}\wedge{\xi_2})\e).
\end{equation*}
Letting $n \to \infty$, we see that $C^*(\arp)$ satisfies Definition~\ref{def:L-natural-convexity}.
\end{proof}

\section{Properties of the Optimal Cost Function $C^*(\arp)$}
\label{sec:prop-optimal-function}

\subsection{Impact of Adjustments on the Cost Function}
\label{sec:impact-controls-cost}

Recall the function $C(\arp,\up \xi,\down \xi)$ and their partial derivatives $\frac{\partial C(\arp,\up \xi,\down \xi)}{\partial \up \xi}$ and $\frac{\partial C(\arp,\up \xi,\down \xi)}{\partial \down \xi}$ introduced in Section~\ref{sec:Heuristic derivation of the optimality equation}.
A quick fact is that the $L^\natural$-convexity of $C^*(\arp)$ immediately implies that the cost function $C(\arp,\up \xi,\down \xi)$ is convex and differentiable in $\up \xi$ and $\down \xi$.
The following properties of the partial derivatives will help identify the control regions and consequently construct the optimal policy in Section~\ref{sec:optimal-policy-zero}.

\begin{lemma}
Monotonicity of the derivatives:
\label{prop:mono-boundary-con}
\begin{enumerate}
\item If $\arp_1 \preceq \arp_2$ and $\arp_1(\ell)=\arp_2(\ell)$, $\frac{\partial C(\arp_1,\up \xi,\down \xi)}{\partial\up \xi} \geq \frac{\partial C(\arp_2,\up \xi,\down \xi)}{\partial\up \xi}$.

\item For $a>0$, $\frac{\partial C(\arp+a,\up \xi,\down \xi)}{\partial\up \xi} \le \frac{\partial C(\arp,\up \xi,\down \xi)}{\partial\up \xi}$.

\item $\frac{\partial C(\arp,\up \xi,\down \xi)}{\partial\down \xi}$ is decreasing in $\arp$.
\end{enumerate}
\end{lemma}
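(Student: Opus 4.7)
The plan is to deduce all three monotonicity statements from the $L^\natural$-convexity of $C^*$ (Theorem~\ref{thm:L-natural}). Because $C(\arp,\up\xi,\down\xi) = \up k\,\up\xi + \down k\,\down\xi + C^*(\Phi_{\up\xi,\down\xi}(\arp))$, differentiation in $\up\xi$ produces the one-sided directional derivative of $C^*$ at $\Phi_{\up\xi,\down\xi}(\arp)$ in the direction $\mathbf{1}_{[\ell,\infty)}$, and differentiation in $\down\xi$ produces minus the directional derivative in the direction $\mathcal I$; these one-sided derivatives exist because $C^*$ is convex along every affine line in $\mathbb{D}$, itself a consequence of $L^\natural$-convexity. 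The constants $\up k,\down k$ drop out whenever I compare derivatives at two different $\arp$'s, so each of the three claims reduces to a statement about how a fixed directional derivative of $C^*$ varies with its argument. Using $\Phi_{\up\xi,\down\xi}(\arp+a) = \Phi_{\up\xi,\down\xi}(\arp)+a$ and the order-preservation of $\Phi_{\up\xi,\down\xi}$, I may work with $\mathcal Y := \Phi_{\up\xi,\down\xi}(\arp)$ throughout.

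The unified strategy is to choose $\arp_1,\arp_2\in\mathbb{D}$ and $\xi_1,\xi_2\in\R$ so that the four states appearing in
\[
C^*(\arp_1-\xi_1)+C^*(\arp_2-\xi_2) \geq C^*(\arp_1\vee\arp_2-\xi_1\vee\xi_2)+C^*(\arp_1\wedge\arp_2-\xi_1\wedge\xi_2)
\]
coincide with the four corners of the desired second-difference for $C^*$, then divide by the perturbation size $\epsilon$ and send $\epsilon\downarrow 0$. For Part~1, with $\mathcal Y_i := \Phi_{\up\xi,\down\xi}(\arp_i)$, I would take $\arp_1=\mathcal Y_1+\epsilon\mathbf{1}_{[\ell,\infty)}$, $\arp_2=\mathcal Y_2$, $\xi_1=\xi_2=0$: the hypothesis $\mathcal Y_1(\ell)=\mathcal Y_2(\ell)$ is exactly what collapses the pointwise join to $\mathcal Y_2+\epsilon\mathbf{1}_{[\ell,\infty)}$ and the meet to $\mathcal Y_1$, and the inequality rearranges into the desired comparison of $D_{\mathbf{1}_{[\ell,\infty)}}C^*$ at $\mathcal Y_1$ and $\mathcal Y_2$. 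For Part~3, an analogous coupling with $\mathcal Y_1\preceq\mathcal Y_2$, $\arp_1=\mathcal Y_1+\epsilon\mathcal I$, $\arp_2=\mathcal Y_2$, $\xi_1=0$, $\xi_2=-\epsilon$, yields the corresponding second-difference in the direction $\mathcal I$, producing the monotonicity of $D_{\mathcal I}C^*$ that $\partial_{\down\xi}C$ inherits.

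The main obstacle is Part~2, which does not follow from plain submodularity of $C^*$: the naive choice $\arp_1=\mathcal Y+a$, $\arp_2=\mathcal Y+\epsilon\mathbf{1}_{[\ell,\infty)}$, $\xi_1=\xi_2=0$ degenerates because $\arp_1\succeq\arp_2$ pointwise for $\epsilon\le a$, so the join equals $\arp_1$ and the meet equals $\arp_2$ and the $L^\natural$ inequality collapses to a trivial identity. The essential move, and the place where $L^\natural$-convexity is genuinely stronger than submodularity, is to couple the tail perturbation with the global shift through a negative scalar: take $\arp_1=\mathcal Y+\epsilon\mathbf{1}_{[\ell,\infty)}$, $\xi_1=-a$, $\arp_2=\mathcal Y$, $\xi_2=0$, so that $\arp_1-\xi_1 = \mathcal Y+a+\epsilon\mathbf{1}_{[\ell,\infty)}$, $\arp_2-\xi_2=\mathcal Y$, $\arp_1\vee\arp_2 = \mathcal Y+\epsilon\mathbf{1}_{[\ell,\infty)}$, $\arp_1\wedge\arp_2=\mathcal Y$, $\xi_1\vee\xi_2=0$, and $\xi_1\wedge\xi_2=-a$. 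The $L^\natural$ inequality then nontrivially couples $\mathcal Y$ with $\mathcal Y+a$; dividing by $\epsilon$ and sending $\epsilon\downarrow 0$ extracts the required comparison of $D_{\mathbf{1}_{[\ell,\infty)}}C^*$ at $\mathcal Y+a$ and at $\mathcal Y$. A secondary subtlety in Parts~1 and~3 is the boundary case where $\mathcal Y_1$ and $\mathcal Y_2$ touch at some $u$, which I would resolve by a uniform $\delta$-perturbation of $\mathcal Y_2$ combined with the Lipschitz continuity of $C^*$ from Proposition~\ref{prop:Lips-con of C^*}, letting $\delta\downarrow 0$ at the end.
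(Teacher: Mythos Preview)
Your proposal is correct and follows essentially the same route as the paper: each part is obtained by inserting a judicious choice of $(\arp_1,\arp_2,\xi_1,\xi_2)$ into the $L^\natural$-inequality for $C^*$ and reading off the second-difference comparison, with your choices for Parts~1 and~2 coinciding (up to relabeling) with the paper's. The one place the paper is cleaner is Part~3: rather than perturbing the state as $\mathcal Y_1+\epsilon\mathcal I$ and then needing a $\delta$-separation to force the join and meet to simplify, the paper takes $\arp_1=\mathcal Y_2$, $\arp_2=\mathcal Y_1$, $\xi_1=\xi$, $\xi_2=\xi+\epsilon$ directly, so that $\arp_1\vee\arp_2=\mathcal Y_2$ and $\arp_1\wedge\arp_2=\mathcal Y_1$ hold exactly and no limiting $\delta$-argument is required; note also that your claimed boundary subtlety in Part~1 does not in fact arise, since $\mathcal Y_1(\ell)=\mathcal Y_2(\ell)$ already makes the join and meet collapse exactly.
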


\begin{lemma}
  \label{lemma:continuous}
  Continuity of the derivatives:
 $\frac{\partial C(\arp,\up \xi,\down \xi)}{\partial \up \xi}$ and $\frac{\partial
C(\arp,\up \xi,\down \xi)}{\partial \down \xi}$ are continuous in $\arp$.
\end{lemma}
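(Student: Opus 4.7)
The plan is to reduce the continuity claim to a classical fact about pointwise limits of convex functions, using Proposition~\ref{prop:Lips-con of C^*} and the $L^\natural$-convexity established in Theorem~\ref{thm:L-natural}. Fix a reference state $\arp_0$ together with an adjustment $(\up\xi_0,\down\xi_0)$, and take an arbitrary sequence $\arp_n \in \mathbb{D}$ with $d(\arp_n,\arp_0)\to 0$. I want to show that $\partial_{\up\xi}C(\arp_n,\up\xi_0,\down\xi_0)\to \partial_{\up\xi}C(\arp_0,\up\xi_0,\down\xi_0)$, and similarly for $\partial_{\down\xi}$.

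First I would verify a uniform pointwise convergence of the one-variable slices. For every fixed $(\up\xi,\down\xi)$, the definition \eqref{Phi} gives
\[
\Phi_{\up\xi,\down\xi}(\arp_n)(u) - \Phi_{\up\xi,\down\xi}(\arp_0)(u) = \arp_n(u)-\arp_0(u),
\]
so $d\bigl(\Phi_{\up\xi,\down\xi}(\arp_n),\Phi_{\up\xi,\down\xi}(\arp_0)\bigr) = d(\arp_n,\arp_0)$. Proposition~\ref{prop:Lips-con of C^*} then yields $|C^*(\Phi_{\up\xi,\down\xi}(\arp_n))-C^*(\Phi_{\up\xi,\down\xi}(\arp_0))|\le M d(\arp_n,\arp_0)$, and since $\phi(\up\xi,\down\xi)$ in \eqref{eq:control-cost} does not depend on $\arp$, one concludes $C(\arp_n,\up\xi,\down\xi)\to C(\arp_0,\up\xi,\down\xi)$ uniformly in $(\up\xi,\down\xi)$ on bounded sets.

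Second, I would invoke the fact (noted just above the lemma) that $C(\arp,\up\xi,\down\xi)$ is convex in $(\up\xi,\down\xi)$, which follows from Theorem~\ref{thm:L-natural}: the map $(\up\xi,\down\xi)\mapsto \Phi_{\up\xi,\down\xi}(\arp)$ is affine, so composing with the $L^\natural$-convex (in particular convex) $C^*$ gives a convex function, and adding the linear $\phi$ preserves convexity. Now I can apply the classical result of convex analysis (e.g.\ Rockafellar, \emph{Convex Analysis}, Theorem~25.7): a sequence of finite convex functions on an open convex set that converges pointwise to a finite convex function converges in each of its partial derivatives at every point of differentiability of the limit. Applying this to the slices $\up\xi\mapsto C(\arp_n,\up\xi,\down\xi_0)$ and $\down\xi\mapsto C(\arp_n,\up\xi_0,\down\xi)$ at the differentiable point $(\up\xi_0,\down\xi_0)$ yields the desired convergence of both partial derivatives.

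The only delicate point I expect is ensuring that differentiability at $(\up\xi_0,\down\xi_0)$ really holds for every $\arp$, and that the limit is taken along an arbitrary sequence (not merely monotone); the first is granted by the remark preceding the lemma, and the second is built into the Rockafellar statement. Thus the whole argument hinges on coupling the Lipschitz estimate from Proposition~\ref{prop:Lips-con of C^*} with the convexity inherited from Theorem~\ref{thm:L-natural}, after which the continuity follows with essentially no extra work.
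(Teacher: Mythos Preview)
Your argument is correct, and it rests on the same two ingredients as the paper's proof: the Lipschitz continuity of $C^*$ (Proposition~\ref{prop:Lips-con of C^*}), which gives pointwise (in fact uniform) convergence of $C(\arp_n,\cdot,\cdot)$ to $C(\arp_0,\cdot,\cdot)$, and the convexity of $C(\arp,\cdot,\cdot)$ in the adjustment variables inherited from Theorem~\ref{thm:L-natural}. The difference is purely in packaging. The paper argues by contradiction: assuming $\partial_{\up\xi}C(\arp_n,\up\xi,\down\xi)-\partial_{\up\xi}C(\arp,\up\xi,\down\xi)>2a_0$ along a sequence, it uses the continuity of $\up\xi\mapsto\partial_{\up\xi}C(\arp,\up\xi,\down\xi)$ to pick $b_0>0$ with $\partial_{\up\xi}C(\arp,\up\xi+b_0,\down\xi)<\partial_{\up\xi}C(\arp,\up\xi,\down\xi)+a_0$, integrates the monotone derivative over $[\up\xi,\up\xi+b_0]$ to obtain $C(\arp,\up\xi+b_0,\down\xi)-C(\arp,\up\xi,\down\xi)\le C(\arp_n,\up\xi+b_0,\down\xi)-C(\arp_n,\up\xi,\down\xi)-a_0b_0$, and then lets $n\to\infty$ to reach a contradiction with the continuity of $C$ in $\arp$. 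This is precisely an \emph{ad hoc} reproof of the special case of Rockafellar's Theorem~25.7 that you invoke directly. Your route is shorter and makes the dependence on standard convex-analysis machinery explicit; the paper's route is self-contained and avoids an external reference.
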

\begin{proof}
Since the proofs are similar, we only prove the continuity for $\frac{\partial C(\arp,\up \xi,\down \xi)}{\partial\up \xi}$.
Suppose it is not continuous and there exists $a_0>0$ and a sequence $\{\arp_n,n=1,2,\ldots\}$ in $\mathbb{D}$ such that, as $n \to \infty$, $d(\arp,\arp_n) \to 0$ but $\frac{\partial C(\arp_n,\up \xi,\up \xi)}{\partial\up \xi}-\frac{\partial C(\arp,\up \xi,\down \xi)}{\partial\up \xi}>2a_0$ or $<-2a_0$ for all $n$. 
By the continuity in Lemma~\ref{lemma:continuous}, there exists $b_0>0$ such that $\frac{\partial C(\arp,\up \xi+b_0,\down \xi)}{\partial\up \xi} < \frac{\partial C(\arp,\up \xi,\down \xi)}{\partial\up \xi}+a_0$.
Thus, $\frac{\partial C(\arp,\up\xi+b_0,\down \xi)}{\partial\up \xi} < \frac{\partial C(\arp_n,\up \xi,\down \xi)}{\partial\up \xi}-a_0$.
Since $C(\arp,\up \xi,\down \xi)$ is convex in $\up \xi$, the partial derivative $\frac{\partial C(\arp,\up \xi,\down \xi)}{\partial\up \xi}$ is increasing in $\up \xi$.
We have
\begin{eqnarray*}
  C(\arp,\up \xi+b_0,\down \xi)-C(\arp,\up \xi,\down \xi)
&=&    \int_0^{b_0} \frac{\partial   C(\arp,\up \xi+s,\down \xi)}{\partial\up \xi}ds \\
&\leq& \int_0^{b_0}\frac{\partial C(\arp,\up \xi+b_0,\down \xi)}{\partial\up \xi}ds \\
&\leq& \int_0^{b_0} \left(\frac{\partial C(\arp_n,\up \xi,\down \xi)}{\partial\up \xi}-a_0\right)ds\\
&\leq& \int_0^{b_0} \frac{\partial C(\arp_n,\up \xi+s,\down \xi)}{\partial\up \xi} ds-\int_0^{b_0} a_0 ds \\
 &=&  C(\arp_n,\up \xi+b_0,\down \xi)-C(\arp_n,\up \xi,\up \xi)-a_0b_0.
\end{eqnarray*}

On the other hand, because $d(\arp,\arp_n)\to 0$ as $n\to\infty$ and $C^*(\arp)$ is continuous in $\mathbb{D}$,  $C(\arp_n,\up \xi+b_0,\down \xi)-C(\arp_n,\up \xi,\down \xi)$ converges to $C(\arp,\up \xi+b_0,\down \xi)-C(\arp,\up \xi,\down \xi)$ as $n\to\infty$. This is a contradiction and $\frac{\partial C(\arp,\up \xi,\down \xi)}{\partial\up \xi}$ is continuous in $\arp$. 
\end{proof}

We also note that $C(\arp,\up\xi,\down\xi)=C^*(\Phi_{\up \xi,\down\xi}(\arp)) 
\leq C^*(\Phi_{\up \xi+\up\epsilon,\down\xi+\down\epsilon}(\arp)) 
+\phi(\up\xi+\up \epsilon,\down\xi+\down\epsilon)=C^*(\arp,\up \xi+\up\epsilon,\down \xi+\down\epsilon)$ for any $\up\epsilon,\down\epsilon>0$. 
Thus, $\frac{\partial C(\arp,\up \xi,\down \xi)}{\partial \up \xi} \geq 0$ and $\frac{\partial C(\arp,\up \xi,\down \xi)}{\partial \down \xi}\geq 0$, and we have the following lemma.
\begin{lemma}
   \label{lemma:nonnegative}
   Non-negativity of the derivatives:
 $\frac{\partial C(\arp,\up \xi,\down \xi)}{\partial \up \xi}$ and $\frac{\partial
C(\arp,\up \xi,\down \xi)}{\partial \down \xi}$ are non-negative.
\end{lemma}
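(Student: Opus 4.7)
The plan is to read the conclusion directly off the optimality equation \eqref{optimality1} combined with the additive structure of the adjustment operator $\Phi$ and of the cost $\phi$. Specifically, from \eqref{optimality1} applied to the state $\Phi_{\up\xi,\down\xi}(\arp)$, one may optionally perform any further non-negative adjustment $(\up\epsilon,\down\epsilon)$ at cost $\phi(\up\epsilon,\down\epsilon)$, so
\begin{equation*}
C^*\bigl(\Phi_{\up\xi,\down\xi}(\arp)\bigr)
\;\le\; \phi(\up\epsilon,\down\epsilon)
 + C^*\bigl(\Phi_{\up\epsilon,\down\epsilon}(\Phi_{\up\xi,\down\xi}(\arp))\bigr).
\end{equation*}
The first key observation is the composition identity $\Phi_{\up\epsilon,\down\epsilon}\circ \Phi_{\up\xi,\down\xi}=\Phi_{\up\xi+\up\epsilon,\down\xi+\down\epsilon}$, which is immediate from the explicit formula \eqref{Phi}. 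The second is that $\phi$ is linear and thus additive: $\phi(\up\xi,\down\xi)+\phi(\up\epsilon,\down\epsilon)=\phi(\up\xi+\up\epsilon,\down\xi+\down\epsilon)$ by \eqref{eq:control-cost}.

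Adding $\phi(\up\xi,\down\xi)$ to both sides and using the definition \eqref{eq:cost-with-control} of $C(\arp,\cdot,\cdot)$, these two facts combine to give
\begin{equation*}
C(\arp,\up\xi,\down\xi) \;\le\; C(\arp,\up\xi+\up\epsilon,\down\xi+\down\epsilon)
\qquad \text{for all } \up\epsilon,\down\epsilon\ge 0.
\end{equation*}
In particular, taking $\down\epsilon=0$ shows that $C(\arp,\cdot,\down\xi)$ is non-decreasing in its first argument, and taking $\up\epsilon=0$ shows the analogous monotonicity in the second argument. Dividing by $\up\epsilon$ (resp.\ $\down\epsilon$) and letting the increment tend to zero, which is justified by the differentiability of $C(\arp,\up\xi,\down\xi)$ in $(\up\xi,\down\xi)$ noted in Section~\ref{sec:impact-controls-cost}, yields $\partial C/\partial\up\xi\ge 0$ and $\partial C/\partial\down\xi\ge 0$.

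There is no real obstacle here: the only thing that needs care is the bookkeeping around composition of $\Phi$ and linearity of $\phi$, both of which are built in by construction. The lemma is essentially a restatement of the principle of optimality (one can always choose to make more of an adjustment later at additional cost), and the argument sketched above makes that precise in three lines.
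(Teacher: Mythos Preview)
Your proposal is correct and follows essentially the same approach as the paper: the paper establishes the inequality $C(\arp,\up\xi,\down\xi)\le C(\arp,\up\xi+\up\epsilon,\down\xi+\down\epsilon)$ for all $\up\epsilon,\down\epsilon>0$ directly from the optimality equation \eqref{optimality1} and then reads off the non-negativity of the partial derivatives. Your version is slightly more explicit about the bookkeeping (the composition identity for $\Phi$ and the additivity of $\phi$), but the argument is the same.
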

Since there are no fixed adjustment costs, any adjustment at a particular time can be viewed as the result of multiple simultaneous adjustments. 
Thus, starting with a smaller adjustment allows more flexibility and results in the non-negativity of the derivatives.

\subsection{The Set of Naturally Reachable States and Its Representation}
\label{sec:PDE}
Starting from an initial state $\arp$, the state at time $s$ will be $\ts_s(\arp)+w$ without any adjustment given a realization of the Brownian motion $W_s=w$. 
Thus, for any $s>0$ and $w \in \R$, we call $\ts_s(\arp)+w$ a \emph{naturally reachable state} from $\arp$ and $\{\ts_s(\arp)+w:s>0, w\in \R\} \subseteq \mathbb {D}$ is the set of all naturally reachable states from $\arp$.
For a fixed initial state $\arp$, any naturally reachable state can be fully described by a pair $(w,s)\in\R\times\R_+$, referred to as a reachable state from a given initial state with a slight abuse of notation.

\subsubsection{The Set of States where no Adjustment is Needed}
\label{sec:up-and-down-w}
At any naturally reachable state $(w,s)$ from an initial state $\arp$, an adjustment may or may not be needed. 
It is obvious that no upward (downward) adjustment should be made at $\arp$ if $\frac{\partial C(\arp,0,0)}{\partial\up \xi}> 0$ $\left(\frac{\partial C(\arp,0,0)}{\partial\down \xi}> 0 \right)$. 
That is, the set of naturally reachable states in which no adjustment is needed is given by 
\begin{equation}
\label{equ:no-control-region}
\Xi_{\arp} 
  = \left\{
    (w,s)\in\R\times\R_+:
    \frac{\partial C(\ts_s(\arp)+w,0,0)}{\partial \up\ja}>0, \
    \frac{\partial C(\ts_s(\arp)+w,0,0)}{\partial \down\ja}>0 
    \right\}.
\end{equation}
Let \begin{eqnarray}
  \label{eq:bound-H}
  \up w_{\arp}(s) &=& \max\left\{
  w\in\R: \frac{\partial C(\ts_s(\arp)+w,0,0)}{\partial\up \xi}=0
  \right\},\\
  \label{eq:bound-L}
  \down w_{\arp}(s) &=& \min\left\{
  w\in\R:\frac{\partial C(\ts_s(\arp)+w,0,0)}{\partial\down \xi}=0
  \right\}.
\end{eqnarray}
By Lemma~\ref{prop:mono-boundary-con}, $\frac{\partial C(\ts_s(\arp)+w,0,0)}{\partial\up \xi}>0$ $\left(\frac{\partial C(\ts_s(\arp)+w,0,0)}{\partial\down \xi}>0\right)$ if and only if $w>\up w_\arp(s)$ $\left(w<\up w_\arp(s)\right)$. 
Thus, \eqref{equ:no-control-region} is equivalent to
\begin{equation*}
\Xi_{\arp} 
  = \left\{
    (w,s)\in\R\times\R_+:
    \up w_{\arp}(s) < w < \down w_{\arp}(s)
    \right\}.
\end{equation*}
Since $\ts_s(\arp)$ increases in $s$ initially and remains   constant when $s\geq \ell$, by Lemma~\ref{prop:mono-boundary-con}, $\up w_{\arp}(s)$ increases in $s$ and stays constant at $\up w_\0(\lt)+\arp(\lt)$ for $s \geq \lt$ and $\down w_{\arp}(s)$ decreases in $s$ and stays constant at $\down w_\0(\lt)+\arp(\lt)$ for $s \geq \lt$ as shown in Figure~\ref{Figure:curves}. 

\begin{figure}[htb]
\begin{tikzpicture}[scale=0.49]
\def\lt{7}
\def\length{16.1}
\def\width{10}

\begin{scope} 
\coordinate (o) at (-3.1,-2);
\coordinate (o') at (0,0);
\draw [->] (-3.1,-3) -- +(\length,0) node [below] {$s$};
\draw [->] (-3.1,-3) -- +(0,\width) node [right] {$w$};
\draw plot[smooth] coordinates {(-3.1,-1.8) (-2,-1.1) (0,-0.4) (1,0.1) (2,0.6) (3.9,1)};
\draw plot[smooth] coordinates {(-3.1,5) (-2,4.2) (0,3.2) (1,2.9) (2,2.6) (3.9,2.2)};
\draw (3.9,2.2) -- +(8,0);
\draw (3.9,1) -- +(8,0);
\draw [dashed] ($(o)+(\lt,-1)$) -- ($(o)+(\lt,2.8)$);
\node [below] at ($(o)+(\lt,-1)$) {$\ell$};
\node [below] at (5.6,3.71) {$\down w_\arp(s)$};
\node [below] at (5.6,0.84) {$\up w_\arp(s)$};
\node [below] at (3.5,2.24) {$\Xi_\arp$};
\node [below] at (5.46,5.6) {downward};
\node [below] at (5.46,-1.4) {upward};
\node [below] at (-0,1.61) {no adjustment};
\end{scope}

\begin{scope} [xshift=18.2cm]
\coordinate (o) at (-3.1,-2);
\coordinate (o') at (0,0);
\draw [->] (-3.1,-3) -- +(\length,0) node [below] {$s$};
\draw [->] (-3.1,-3) -- +(0,\width) node [right] {$w$};
\draw plot[smooth] coordinates {(-3.1,-1.8) (-2,-1) (0,0) (1,0.84) (1.3,1.1) (2,1.6) (3.9,2.1)};
\draw plot[smooth] coordinates {(-3.1,5) (-2,4.2) (0,2.2) (1,1.4) (1.3,1.1) (2,0.7) (3.9,0)};
\draw (3.9,0) -- +(8,0);
\draw (3.9,2.1) -- +(8,0);
\draw [dashed] ($(o)+(\lt,-1)$) -- ($(o)+(\lt,2)$);
\node [below] at ($(o)+(\lt,-1)$) {$\ell$};
\node [below] at (0,4.34) {$\down w_\arp(s)$};
\node [below] at (0,-0.63) {$\up w_\arp(s)$};
\node [below] at (-1.4,2.38) {$\Xi_\arp$};
\node [below] at (5.46,4.2) {downward};
\node [below] at (5.46,-0.7) {upward};
\node [below] at (8.4,1.75) {downward and upward};
\node [below] at (-1.4,1.12) {no adjustment};
\end{scope}

\end{tikzpicture}
 \captionof{figure}{$\up w_\arp(s)$ and $\down w_\arp(s)$ which define $\Xi_\arp$} 
\label{Figure:curves}
\end{figure}
At any given state $\arp$, no adjustment is needed if $(0,0) \in \Xi_\arp$, or equivalently $\down w_\arp(0)<0<\up w_\arp(0)$. 
Otherwise, as $\up\xi$ ($\down \xi$) increases, by Lemma~\ref{prop:mono-boundary-con}, the marginal cost remains zero initially, i.e., $\frac{\partial C(\arp,\up\xi,\down \xi)}{\partial\up \xi}$ $\left(\frac{\partial C(\arp,\up\xi,\down \xi)}{\partial\down \xi}\right)$ stays at $0$ for a while until it becomes positive.
Since there are  no fixed control costs, intuitively, the optimal upward (downward) adjustment should be obtained at the maximum $\up\xi$ ($\down \xi$) at which the derivative is zero.
This means that an upward (downward) adjustment is needed at time $s$ if $w<\up w_\arp(s)$ ($w>\down w_\arp(s)$) as depicted in Figure~\ref{Figure:curves} and simultaneous upward and downward adjustments are needed at a reachable state $(w,s)$ if and only if $\down w_\0(\lt) \geq \up w_\0(\lt)$ as shown in the second case in Figure~\ref{Figure:curves}. 

Furthermore, we show in the next proposition that $\up w_\0(\lt)$ and $\down w_\0(\lt)$  provide sufficient information for deciding whether or not an upward or downward adjustment is not needed at a state.

\begin{proposition}
\label{prop:two-apart-bounds}
No upward and downward adjustment is needed at $\arp$ if $\arp(\lt) >\up w_\0(\lt)$ and $\arp(\lt) < \down w_\0(\lt)$, respectively.

\end{proposition}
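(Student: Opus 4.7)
The plan is to compare the state $\arp$ to the constant state $\arp':=\0+\arp(\lt)$ (so $\arp'(u)=\arp(\lt)$ for all $u\ge 0$) and transfer the positivity of the relevant partial derivative from $\arp'$ back to $\arp$ via Lemma~\ref{prop:mono-boundary-con}. Since $\arp\in\mathbb D$ is non-decreasing and is constant on $[\lt,\infty)$, we have $\arp(u)\le \arp(\lt)=\arp'(u)$ for every $u\le\lt$ with equality at $u=\lt$, and $\arp(u)=\arp'(u)$ for $u\ge\lt$. Hence $\arp\preceq\arp'$ and $\arp(\lt)=\arp'(\lt)$, which are exactly the hypotheses needed by part~(1) of the monotonicity lemma; among all profiles in $\mathbb D$ sharing $\arp(\lt)$ as their terminal value, $\arp'$ is pointwise maximal, so this is the sharpest comparison available.

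Next I would simplify the defining sets for $\up w_\0(\lt)$ and $\down w_\0(\lt)$. Since $\ts_\lt(\0)(u)=\0(u+\lt)=0$ for every $u\ge 0$, we have $\ts_\lt(\0)=\0$, so \eqref{eq:bound-H}--\eqref{eq:bound-L} reduce to
\begin{equation*}
\up w_\0(\lt)=\max\Big\{w\in\R:\tfrac{\partial C(\0+w,0,0)}{\partial\up\xi}=0\Big\},\qquad
\down w_\0(\lt)=\min\Big\{w\in\R:\tfrac{\partial C(\0+w,0,0)}{\partial\down\xi}=0\Big\}.
\end{equation*}
For the upward claim, suppose $\arp(\lt)>\up w_\0(\lt)$. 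Then $\arp(\lt)$ strictly exceeds the maximum of the zero set, so $\tfrac{\partial C(\arp',0,0)}{\partial\up\xi}\neq 0$, and together with Lemma~\ref{lemma:nonnegative} this yields $\tfrac{\partial C(\arp',0,0)}{\partial\up\xi}>0$. Applying Lemma~\ref{prop:mono-boundary-con}(1) with $\arp_1=\arp$ and $\arp_2=\arp'$ then gives $\tfrac{\partial C(\arp,0,0)}{\partial\up\xi}\ge \tfrac{\partial C(\arp',0,0)}{\partial\up\xi}>0$, so no upward adjustment is needed at $\arp$.

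The downward case is entirely analogous: if $\arp(\lt)<\down w_\0(\lt)$, the minimality in the definition together with Lemma~\ref{lemma:nonnegative} yields $\tfrac{\partial C(\arp',0,0)}{\partial\down\xi}>0$, and since $\tfrac{\partial C(\cdot,0,0)}{\partial\down\xi}$ is decreasing in $\arp$ by Lemma~\ref{prop:mono-boundary-con}(3) and $\arp\preceq\arp'$, we conclude $\tfrac{\partial C(\arp,0,0)}{\partial\down\xi}\ge \tfrac{\partial C(\arp',0,0)}{\partial\down\xi}>0$. The whole argument is quite short; the one step requiring thought is the choice of $\arp'$, because it is precisely the pointwise-maximal element of $\mathbb D$ with terminal value $\arp(\lt)$ that lets us invoke the shape-preserving part~(1) of the monotonicity lemma instead of the weaker uniform-shift part~(2).
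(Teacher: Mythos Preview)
Your proof is correct and follows essentially the same strategy as the paper: compare $\arp$ to a constant profile and transfer positivity of the partial derivatives via Lemma~\ref{prop:mono-boundary-con}. The only difference is the choice of comparison state in the upward case: you compare $\arp$ directly with $\arp(\lt)\mathcal{I}$ and need only part~(1), whereas the paper first shifts $\arp$ down by $b=\arp(\lt)-\up w_\0(\lt)$, compares $\arp-b$ with $\up w_\0(\lt)\mathcal{I}$ via part~(1), and then lifts back using part~(2); your route is slightly more economical and makes the strict-inequality step cleaner.
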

\begin{proof}If $\arp(\lt)< \down w_{\0}(\lt)$, we have $\arp \preceq \down w_{\0}(\lt) {\mathcal I}$.
By part~3 of Lemma~\ref{prop:mono-boundary-con}, $\frac{\partial C(\arp,0,0)}{\partial\down \xi} \geq \frac{\partial C(\down w_{\0}(\lt){\mathcal I},0,0)}{\partial\down \xi}=0$.
Hence, by the definition of $\down w_{\0}(\lt)$, $\frac{\partial C(\arp,0,0)}{\partial\down\xi}>0$.
If $\arp(\lt) > \up w_{\0}(\lt)$, there exists $b>0$ such that $\arp(\lt)-b=\up w_{\0}(\lt)$ and $\arp-b \preceq \down w_{\0}(\lt){\mathcal I}$.
Then $\frac{\partial C(\arp,0,0)}{\partial\up \xi} 
\geq \frac{\partial C(\arp-b,0,0)}{\partial\up \xi} 
\geq \frac{\partial C(\up w_{\0}(\lt){\mathcal I},0,0)}{\partial\up \xi}=0$
by parts~2 and 1 of Lemma~\ref{prop:mono-boundary-con}, and $\frac{\partial C(\arp,0,0)}{\partial\up \xi}>0$ by the definition of $\up w_{\0}(\lt)$.
\end{proof}

\subsubsection{Properties of $V_{\arp}(w,s)$ on $\Xi_\arp$} 
In this section, we will show that the optimal value function $V_{\arp}(w,s)$ defined in \eqref{eq:cost-no-control} is a solution to the HJB equation \eqref{eq:optimality-heuristic} with  $\Xi_\arp$ as the boundaries and identify the timing of adjustment.

\begin{theorem}
  \label{thm:PDE}
  The partial derivatives $\frac{\partial V_{\arp}(w,s)}{\partial s}$, $\frac{\partial V_{\arp}(w,s)}{\partial w}$ and $\frac{\partial^2 V_{\arp}(w,s)}{\partial w^2}$ exist and
  \begin{equation}
    \label{equ:PDE}
    \frac{\partial V_{\arp}(w,s)}{\partial s}
    +\frac{\sigma^2}{2}\frac{\partial^2 V_{\arp}(w,s)}{\partial w^2}
    +\mu\frac{\partial V_{\arp}(w,s)}{\partial w}-\gamma V_{\arp}(w,s)
    +h(\arp(s)+w)=0
  \end{equation}
holds for almost every $(w,s)\in \Xi_{\arp}$.
\end{theorem}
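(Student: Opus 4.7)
The plan is the classical verification approach from stochastic control for a no-action region: compare $V_\arp$ with the unique classical solution of the linear parabolic PDE on a small rectangle in $\Xi_\arp$, identify them via It\^o's formula and the Bellman principle, and thereby inherit regularity and the PDE itself from that classical solution.

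\emph{Step 1 (openness and localization).} First I would verify that $\Xi_\arp$ is an open subset of $\R\times\R_+$: the map $(w,s)\mapsto \ts_s(\arp)+w$ is continuous into $(\mathbb D,d)$ and by Lemma~\ref{lemma:continuous} both partial derivatives $\frac{\partial C(\cdot,0,0)}{\partial\up\xi}$ and $\frac{\partial C(\cdot,0,0)}{\partial\down\xi}$ are continuous in this metric, so the defining strict inequalities of $\Xi_\arp$ are preserved on a neighborhood. Fix $(w_0,s_0)\in\Xi_\arp$ and choose an open rectangle $R$ with $(w_0,s_0)\in R$ and $\overline{R}\subset\Xi_\arp$.

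\emph{Step 2 (Dynkin-type identity).} I would establish
\begin{equation*}
V_\arp(w_0,s_0)=\E\!\left[\int_0^{\tau_R} e^{-\gamma u}h(\arp(s_0+u)+w_0+W_u)\,du \,+\, e^{-\gamma\tau_R}V_\arp(w_0+W_{\tau_R},s_0+\tau_R)\right],
\end{equation*}
where $\tau_R$ is the first exit of $u\mapsto(w_0+W_u,s_0+u)$ from $R$. The $\leq$ inequality is immediate from optimality condition~\eqref{optimality2} by taking the feasible strategy of not adjusting until $\tau_R$ and then acting optimally. The $\geq$ inequality uses that $\frac{\partial C(\cdot,0,0)}{\partial\up\xi}$ and $\frac{\partial C(\cdot,0,0)}{\partial\down\xi}$ are strictly positive throughout $\overline{R}$, so any nontrivial adjustment on $[0,\tau_R)$ is strictly suboptimal; making this rigorous combines a time-discretization of the admissible policy, the Lipschitz bound of Proposition~\ref{prop:Lips-con of C^*}, and the $L^\natural$-convexity from Theorem~\ref{thm:L-natural}.

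\emph{Step 3 (PDE comparison).} I would next solve the linear Dirichlet problem on $R$,
\begin{equation*}
\frac{\partial\tilde V}{\partial s}+\frac{\sigma^2}{2}\frac{\partial^2\tilde V}{\partial w^2}+\mu\frac{\partial\tilde V}{\partial w}-\gamma\tilde V+h(\arp(s)+w)=0\text{ in }R,\qquad \tilde V\big|_{\partial R}=V_\arp\big|_{\partial R}.
\end{equation*}
Since $V_\arp|_{\partial R}$ is Lipschitz by Proposition~\ref{prop:Lips-con of C^*} and the source $h(\arp(s)+w)$ is bounded and piecewise $C^2$ by Assumption~\ref{assum:orig-h}, constant-coefficient parabolic theory (Schauder or $W^{2,p}$ estimates) yields a unique $\tilde V\in C(\overline R)$ that is $C^{1,2}$ at all $(w,s)\in R$ where $h$ is locally $C^2$ around $\arp(s)+w$, i.e.\ at almost every point of $R$. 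Applying It\^o's formula to $u\mapsto e^{-\gamma u}\tilde V(w_0+W_u,s_0+u)$ stopped at $\tau_R$, taking expectation, and using the PDE to eliminate the drift produces exactly the right-hand side of the Dynkin identity; hence $\tilde V(w_0,s_0)=V_\arp(w_0,s_0)$. Since $(w_0,s_0)$ was arbitrary in $R$, $V_\arp\equiv\tilde V$ on $R$, and the required existence of partials and the PDE follow at every point of $R$ where $\tilde V$ is $C^{1,2}$.

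The \textbf{main obstacle} will be the $\geq$ direction of the Dynkin identity in Step~2: the strict positivity of the adjustment derivatives is a \emph{pointwise} condition, whereas one must rule out \emph{any} nontrivial cumulative (possibly singular) control on $[0,\tau_R)$. I expect this to require an integrated, discretized comparison argument leveraging Theorem~\ref{thm:L-natural} to sum up the infinitesimal penalties coherently along the trajectory. The "almost every" qualifier in the statement precisely absorbs the measure-zero set where the piecewise $C^2$ regularity of $h$ fails and $\tilde V$ may only be $C^{1,1}$.
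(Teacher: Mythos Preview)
Your proposal is correct and follows essentially the same architecture as the paper's proof: the crux is the Dynkin-type identity (your Step~2), which the paper states as Proposition~\ref{lemma:small-neighbour} and proves by exactly the mechanism you anticipate---discretize the policy, exploit the uniform lower bound $k_0>0$ on the adjustment derivatives over the closure of the neighborhood, and contradict Proposition~\ref{prop:C^n converge to C} if the identity fails. Your identification of the $\geq$ direction as the main obstacle is on target; the paper handles it by a case split on whether the cumulative control exceeds $\epsilon$ before the exit time (events $A$ and $A^c$).

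The only genuine difference is your Step~3. The paper passes from the integral identity directly to the PDE by invoking Dynkin's law (characteristic-operator theory), whereas you solve the linear Dirichlet problem, apply It\^o to the classical solution $\tilde V$, and identify $\tilde V=V_\arp$ via the common stochastic representation. Your route is slightly more transparent about where the a.e.\ $C^{1,2}$ regularity of $V_\arp$ comes from---it is inherited from parabolic estimates for $\tilde V$, with the ``almost every'' absorbing the kinks of $h$---whereas the paper's one-line appeal to Dynkin leaves that bookkeeping implicit. Either argument closes the theorem.
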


The key step  in proving Theorem~\ref{thm:PDE} is to establish the following property of $V_{\arp}(w,s)$ in a small enough neighborhood of any point $(\hat{w},\hat{s})$ in $\Xi_{\arp}$.

\begin{proposition}
\label{lemma:small-neighbour}
For a given $\arp \in \mathbb{D}$ and $(\hat{w},\hat{s})\in \Xi_{\arp}$, there exists a neighbourhood of $(\hat{w},\hat{s})$, $\Xi^{(\hat{w},\hat{s})}_\arp \subset \Xi_{\arp}$, such that \begin{equation}
   \label{equ:Int-PDE-small}
    V_{\arp}(w,s)
    =\E\left[\int_0^{\spt} e^{-\gamma t}h(\arp(s+t)+w+W_t) dt\right]
    +\E\big[e^{-\gamma \spt}V_{\arp}(w+W_{\spt},s+\spt)\big],
  \end{equation}
for all $(w,s) \in \Xi^{(\hat{w},\hat{s})}_\arp$, where $\spt$ is the first time the process $\{(w+W_{t},s+t): t \geq 0\}$ leaves $\Xi^{(\hat{w},\hat{s})}_\arp$.
\end{proposition}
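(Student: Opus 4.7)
The plan is to exploit that $(\hat w,\hat s)\in\Xi_\arp$ forces both marginal adjustment costs at the state $\ts_{\hat s}(\arp)+\hat w$ to be strictly positive, and by continuity this persists on a whole neighborhood, so no adjustment is strictly preferred at every reachable point of that neighborhood. Identity \eqref{equ:Int-PDE-small} is then the standard dynamic-programming identity on a ``continuation'' region, and I would establish it by combining \eqref{optimality2} for the upper bound with a strong-Markov argument using strict positivity of marginal costs for the lower bound.

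\textbf{Step 1: choice of the neighborhood.} The map $(w,s)\mapsto \ts_s(\arp)+w$ from $\R\times\R_+$ into $(\mathbb D,d)$ is continuous, by dominated convergence using the exponential discount factor inside $d$ together with the right-continuity of $\arp$. Combined with Lemma~\ref{lemma:continuous}, the two functions $(w,s)\mapsto \partial C(\ts_s(\arp)+w,0,0)/\partial\up\xi$ and $(w,s)\mapsto \partial C(\ts_s(\arp)+w,0,0)/\partial\down\xi$ are continuous. Since both are strictly positive at $(\hat w,\hat s)$, I can pick an open box $\Xi_\arp^{(\hat w,\hat s)}\subset\Xi_\arp$ around $(\hat w,\hat s)$ on which each derivative is uniformly bounded below by some $\epsilon_0>0$. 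Let $\spt$ be the first exit time of $\{(w+W_t,s+t):t\ge 0\}$ from this box; because the box is bounded in the time direction, $\spt$ is a.s.\ bounded.

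\textbf{Step 2: upper bound.} For any $(w,s)\in\Xi_\arp^{(\hat w,\hat s)}$, I apply \eqref{optimality2} to the initial state $\ts_s(\arp)+w$ with the particular stopping time $\spt$, then simplify using $\ts_\spt(\ts_s(\arp)+w)=\ts_{s+\spt}(\arp)+w$ and $V_\arp(w',s')=C^*(\ts_{s'}(\arp)+w')$. This produces the ``$\le$'' half of \eqref{equ:Int-PDE-small} directly.

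\textbf{Step 3: lower bound and main obstacle.} For any feasible $\pi$ starting from $\ts_s(\arp)+w$, the strong Markov property at $\spt$ and the definition of $C^*$ yield
\[
  C(\ts_s(\arp)+w,\pi)\ge \E\!\left[\int_0^\spt e^{-\gamma t} h(\arp_t^\pi(0))\,dt +\int_{[0,\spt]}e^{-\gamma t}(\up k\,d\up Y+\down k\,d\down Y)+ e^{-\gamma \spt} C^*(\arp_\spt^\pi)\right],
\]
so taking the infimum over $\pi$ reduces the ``$\ge$'' half to showing this infimum is attained by the policy doing nothing on $[0,\spt)$. The key pointwise ingredient is supplied by Step~1: by convexity of $C(\arp',\cdot,\cdot)$ and the uniform lower bound $\epsilon_0$ on both partial derivatives, $C(\arp',\up\xi,\down\xi)-C^*(\arp')\ge\epsilon_0(\up\xi+\down\xi)$ at every state $\arp'$ visited before $\spt$. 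The main obstacle is promoting this pointwise inequality to a global cost reduction when the controls $\up Y,\down Y$ may be singularly continuous rather than finite sums of impulses; I would handle this by truncating $\pi$ to its jumps of size at least $\delta$ (using the feasibility bound \eqref{equ:feasiblecondition} to keep the residual small), applying the marginal inequality at each truncated jump via the strong Markov property, and then letting $\delta\to 0$. This lets me delete all pre-$\spt$ adjustments without increasing cost, leaving the no-adjustment trajectory $\arp(s+t)+w+W_t$ and matching the infimum with the right-hand side of \eqref{equ:Int-PDE-small}.
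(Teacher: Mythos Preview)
Your Step~1 and Step~2 are fine and match the paper's setup. The gap is in Step~3, and it is genuine rather than cosmetic.

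First, the pointwise inequality $C(\arp',\up\xi,\down\xi)-C^*(\arp')\ge \epsilon_0(\up\xi+\down\xi)$ only holds for $\arp'$ in a $d$-neighborhood of $\ts_{\hat s}(\arp)+\hat w$ in $\mathbb D$. Under an arbitrary policy $\pi$ the controlled state $\arp_t^\pi$ can leave that $\mathbb D$-neighborhood long before $(w+W_t,s+t)$ exits your box in $\R\times\R_+$; nothing in your argument forces it to stay. Second, and more seriously, even if the pointwise bound held at every jump, telescoping it via the strong Markov property only yields
\[
\E\!\left[\int_0^\spt e^{-\gamma t}h(\arp^\pi_t(0))\,dt+\text{adj. cost}+e^{-\gamma\spt}C^*(\arp^\pi_\spt)\right]\ \ge\ C^*(\ts_s(\arp)+w)=V_\arp(w,s),
\]
which is the trivial direction. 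It does \emph{not} give the comparison you need, namely that the left side dominates the \emph{no-adjustment} value $\E[\int_0^\spt e^{-\gamma t}h(\arp(s+t)+w+W_t)\,dt+e^{-\gamma\spt}V_\arp(w+W_\spt,s+\spt)]$. Removing an adjustment changes both the holding-cost trajectory and the terminal state $\arp^\pi_\spt$, and the marginal inequality relates $C^*$ before and after a jump to each other, not to the uncontrolled path. Your truncation to jumps of size $\ge\delta$ does not resolve either issue.

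The paper avoids this by arguing by contradiction: assume the identity fails at some $(w',s')$ with gap $c_0>0$, restrict to periodic policies $\pi^n$ via Proposition~\ref{prop:C^n converge to C}, and split on the event $A$ that cumulative adjustment reaches a small level $\epsilon\le\delta$ before exit. On $A$, the first $\epsilon$ of adjustment occurs while the state is still within $3\delta$ of the center (so the $k_0$ lower bound applies), forcing an excess cost $\pr(A)e^{-\gamma\delta}k_0\epsilon$. On $A^c$, cumulative adjustment is $<\epsilon$, so the controlled path is uniformly $\epsilon$-close to the uncontrolled one; Lipschitz continuity then transfers the contradiction hypothesis $\text{RHS}>V+c_0$ into a lower bound on cost. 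The contradiction hypothesis is exactly what supplies the missing comparison with the uncontrolled trajectory that your direct argument lacks.
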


\begin{proof}
By the continuity of the partial derivatives in Lemma~\ref{lemma:continuous}, there exist $\delta>0$ and $k_0>0$ such that, for any $\hat\arp$ satisfying $d(\ts_{\hat{s}}(\arp)+\hat{w},\hat{\arp})< 3\delta$, 
\begin{eqnarray}
  \frac{\partial C(\hat{\arp},0,0)}{\partial \up\ja} \geq k_0, 
  \quad \frac{\partial C(\hat{\arp},0,0)}{\partial \down\ja} \geq k_0.     \label{eq:subregion}
\end{eqnarray}
Consider a neighbourhood of $(\hat{w},\hat{s})$, $\Xi^{(\hat{w},\hat{s})}_\arp=(\hat{w},\hat{s})+B(\delta)$, where $B(\delta):=[-\delta,\delta] \times \left[0,\frac{\delta}{\gamma \arp(\lt)} \right]$. 
For any $(w,s) \in \Xi^{(\hat{w},\hat{s})}_\arp$, recall the definition of the distance $d(\cdot,\cdot)$ in Section~\ref{sec:model-formulation},
\begin{eqnarray*}
d(\ts_{\hat{s}}(\arp)+\hat{w},\ts_{s}(\arp)+w)
&\leq& d(\ts_{\hat{s}}(\arp)+\hat{w},\ts_{s}(\arp)+\hat{w})
       +d(\ts_{s}(\arp)+\hat{w},\ts_{s}(\arp)+w)\\
&\leq& (s-\hat{s}) \gamma \arp(\lt) + \delta \leq 2\delta.
\end{eqnarray*}
Thus, $\Xi^{(\hat{w},\hat{s})}_\arp \subset \Xi_{\arp}$.
Next, show the proposition holds in this neighborhood by contradiction via the following three steps. 

\begin{enumerate}
 \item Suppose \eqref{equ:Int-PDE-small} does not hold at a pair $(w',s')\in \Xi^{(\hat{w},\hat{s})}_\arp$.
Since $V_{\arp}(w',s')=C^{*}(\ts_{s'}(\arp)+w')$ is the optimal cost at $\ts_{s'}(\arp)+w'$, there exists a positive $c_0$ such that  
\begin{equation}
  \label{eq:inequ-small-neighbour}
  \E\left[\int_0^{\tau'} e^{-\gamma t}h(\mathcal{X}(s'+t)+w'+W_t) dt\right]
  +\E\big[e^{-\gamma \tau'} V_{\arp}(w'+W_{\spt'},s'+\spt')\big] >V_{\arp}(w',s')
+c_0,
\end{equation}
where $\tau'$ is the stopping time when $\{(w'+W_{t},s'+t): t \geq 0\}$ leaves $(\hat{w},\hat{s})+B(\delta)$.
Introducing $\arp'=\ts_{s'}(\arp)+w'$, \eqref{eq:inequ-small-neighbour} is equivalent to  
\begin{equation}
  \label{eq:o-inequ-small-neighbour}
  \E\left[\int_0^{\spt'} e^{-\gamma t}h(\mathcal{X'}(t)+W_t) dt\right]
  +\E\big[e^{-\gamma \tau'} V_{\arp'}(W_{\spt'},\spt')\big] >V_{\arp'}(0,0)+c_0.
\end{equation}
For any feasible periodic control policy $\pi^n=(Y^{n\uparrow},Y^{n\downarrow})$ defined in \eqref{eqn:periodic-policy}, let $\arp'_t$ be the updated state at time $t$ under this policy $\pi^n$. 
For any $\epsilon \leq \delta$, we define $N(\epsilon)=\inf\{k:\sum_{i=0}^k (\xi^{n\uparrow}_i+\xi^{n\downarrow}_i) \geq \epsilon \}$.
Without loss of generality, assume that $\sum_{i=0}^{N(\epsilon)} (\xi^{n\uparrow}_i+\xi^{n\downarrow}_i)=\epsilon$, otherwise  split the adjustments $\xi^{n\uparrow}_{N(\epsilon)}$ and/or $\xi^{n\downarrow}_{N(\epsilon)}$ into two. 
Let $A$ be the event where the $N(\epsilon)$th adjustment is made after $\tau'$, i.e., $A=\{\tau'\le T^n_{N(\epsilon)}\}$.

\item
Estimate the cost $C(\arp',\pi^n)$ by considering the events $A$ and $A^c$, respectively.

(a) On the event $A$, from \eqref{eq:subregion},  the marginal costs for the upward and downward adjustments are quite large  implying that 
\begin{equation}
  \label{equ:A happens}
  C(\arp',\pi^n) \geq C^*(\arp') +\prob(A) e^{-\gamma \delta} k_0 \epsilon
              \geq V_{\arp'}(0,0)+ \prob(A) e^{-\gamma \delta} k_0 \epsilon.
\end{equation}
(b) On the event $A^c$, the cumulative amount of upward and downward adjustment by the stopping time $\tau'$ is less than $\epsilon$, i.e. $d(\arp'_{s},\ts_{s}(\arp')+W_{s}) < \epsilon$ for any $0 \le s \le \tau'$.
Combining with \eqref{eq:o-inequ-small-neighbour} we can imply that
\begin{eqnarray}
  C(\arp',\pi^n) \geq
  V_{\arp'}(0,0) +c_0-\frac{M \epsilon}{\gamma}-\prob(A) (\delta\bar{h}+\bar{V}),
  \label{equ:A^c happens}
\end{eqnarray}
where $\bar{h}$ and $\bar{V}$ are two constants.
The detailed proofs of \eqref{equ:A happens} and \eqref{equ:A^c happens} are presented in the Appendix.
\item
Properly choose $\epsilon=\min\{\delta,\frac{c_0 \gamma}{2M}\}$ and denote $p_0=\frac{c_0}{2e^{-\gamma \delta}k_0\min\{\delta,\frac{c_0 \gamma}{2M}\}+2\delta\bar{h}+2\bar{V}}$.
If $\prob(A) \geq p_0$, from \eqref{equ:A happens}  $C(\arp',\pi) \geq V_{\arp'}(0,0)+e^{-\gamma \delta}k_0 \min\{\delta,\frac{c_0 \gamma}{2M}\}p_0$.
Otherwise, if $\prob(A) \leq p_0$, from \eqref{equ:A^c happens} $C(\arp',\pi) \geq V_{\arp'}(0,0)+e^{-\gamma \delta} k_0 \min \{\delta,\frac{c_0 \gamma}{2M}\}p_0$.
Thus, for any discrete policy $\pi^n$, its associated expected cost will be at least $V_{\arp'}(0,0)+e^{-\gamma \delta} k_0 \min \{\delta,\frac{c_0 \gamma}{2M}\}p_0$.
However, this is a contradiction of Proposition~\ref{prop:C^n converge to C}.
\end{enumerate}
\end{proof}
We are now ready to prove Theorem~\ref{thm:PDE}.
\begin{proof}[Proof of Theorem~\ref{thm:PDE}]
In Proposition~\ref{lemma:small-neighbour}, we've proved that for any $(\hat{w},\hat{s})\in \Xi_{\arp}$, we can find a corresponding subset $\Xi^{(\hat{w},\hat{s})}_\arp \in \Xi_{\arp}$ such that all the points in the subset satisfy \eqref{equ:Int-PDE-small}.
Applying  Dynkin's law in \cite{Dynkin1956} to \eqref{equ:Int-PDE-small}, we find that \eqref{equ:PDE} holds for all $(w,s)$ in $\Xi^{(\hat{w},\hat{s})}_\arp$.
In other words, for any $(\hat{w},\hat{s})\in \Xi_{\arp}$, we can find a corresponding neighborhood $\Xi^{(\hat{w},\hat{s})}_\arp \in \Xi_{\arp}$ where \eqref{equ:PDE} holds.
Since $\Xi_{\arp}=\bigcup \limits_{(w,s) \in \Xi_{\arp}} \Xi^{(w,s)}_\arp$, we can conclude that \eqref{equ:PDE} holds for all points in $\Xi_{\arp}$.
\end{proof}

Define ${\spt_{\arp}} \geq 0$ to be the first time the process $\{(w+W_{t},s+t): t \geq 0\}$ leaves $\Xi_{\arp}$. 
By Ito's formula and Theorem~\ref{thm:PDE}, we have the following corollary (whose proof is skipped as it is same to that of Theorem~\ref{thm:PDE}).
The corollary helps to identify the time of the adjustment since the equation in the corollary actually holds for any stopping time $\tau$ such that $\tau \le \tau_{\arp}$ with probability $1$.

\begin{corollary}
\label{cor:general-ine-pde}
  For any given $\arp \in \mathbb{D}$ and $(w,s)\in \Xi_{\arp}$,
  \begin{equation*}
    V_{\arp}(w,s)
    =\E\left[\int_0^{\spt_{\arp}} e^{-\gamma t}h(\arp(s+t)+w+W_t) dt\right]
    +\E\big[e^{-\gamma {\spt_{\arp}}}V_{\arp}(w+W_{\spt_{\arp}},s+\spt_{\arp})\big].
  \end{equation*}
\end{corollary}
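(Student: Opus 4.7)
The plan is to apply Ito's formula (equivalently, Dynkin's formula) to $e^{-\gamma t} V_{\arp}(w+W_t, s+t)$ stopped at $\spt_{\arp}$, and to use the PDE from Theorem~\ref{thm:PDE} to replace the Ito drift $\Gamma V_{\arp} - \gamma V_{\arp}$ by $-h(\arp(s)+w)$ inside the resulting integrand. This is the natural global analogue of the local identity of Proposition~\ref{lemma:small-neighbour}: now that Theorem~\ref{thm:PDE} furnishes the PDE on all of $\Xi_{\arp}$ rather than on a single small neighborhood $\Xi^{(\hat w,\hat s)}_{\arp}$, we can push the local integral identity up to the exit time from the entire region.

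\textbf{Key steps.} First, I exhaust $\Xi_{\arp}$ by an increasing family of relatively compact open sets $U_n$ with $\overline{U_n}\subset \Xi_{\arp}$ and $U_n \uparrow \Xi_{\arp}$, and define $\tau_n=\inf\{t\ge 0:(w+W_t, s+t)\notin U_n\}$, so that $\tau_n\uparrow \spt_{\arp}$ almost surely. Next, on each $\overline{U_n}$ the derivatives of $V_{\arp}$ are bounded, so Ito's formula applied to $e^{-\gamma t} V_{\arp}(w+W_t, s+t)$ on $[0,\tau_n]$ produces a genuine martingale term with zero expectation; substituting the PDE of Theorem~\ref{thm:PDE} for the drift yields
\[
V_{\arp}(w,s)=\E\left[\int_0^{\tau_n} e^{-\gamma t} h(\arp(s+t)+w+W_t)\,dt\right]+\E\left[e^{-\gamma \tau_n} V_{\arp}(w+W_{\tau_n}, s+\tau_n)\right].
\]
Finally, letting $n\to\infty$, monotone convergence handles the first term since $h\ge 0$, while dominated convergence handles the second: by Proposition~\ref{prop:Lips-con of C^*}, $V_{\arp}$ grows at most linearly in its argument, and the exponential discount $e^{-\gamma t}$ together with standard moment bounds on $W_t$ provides the integrable envelope.

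\textbf{Main obstacle.} The principal delicacy is that the PDE of Theorem~\ref{thm:PDE} is asserted only almost everywhere on $\Xi_{\arp}$, so a one-line invocation of classical Ito's formula is not legitimate. This is precisely why the authors note that the proof mirrors that of Theorem~\ref{thm:PDE}: rather than differentiating $V_{\arp}$ directly, one appeals to the local integral identity of Proposition~\ref{lemma:small-neighbour} on a finite cover of each compact $\overline{U_n}$ and uses the strong Markov property of Brownian motion at the successive exit times to stitch the local pieces into the identity up to $\tau_n$. Once this patching step is carried out on each exhaustion level, the limit $n\to\infty$ produces the identity at the global stopping time $\spt_{\arp}$, completing the proof.
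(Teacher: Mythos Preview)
Your proposal is correct and follows essentially the same line as the paper, which simply states that the corollary follows ``by Ito's formula and Theorem~\ref{thm:PDE}'' and that the argument mirrors the proof of Theorem~\ref{thm:PDE}. Your exhaustion by relatively compact sets and strong-Markov patching of the local identities from Proposition~\ref{lemma:small-neighbour} is exactly the careful way to execute this idea given that the PDE is only known almost everywhere; the paper leaves these details implicit.
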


Based on the $L^\natural$-convexity of $C^*(\arp)$ and its optimality, we have the following proposition.
\begin{proposition}
  \label{prop:PDE-positive}
  The partial derivatives $\frac{\partial V_{\arp}(w,s)}{\partial s}$, $\frac{\partial V_{\arp}(w,s)}{\partial w}$ and $\frac{\partial^2 V_{\arp}(w,s)}{\partial w^2}$ exist and
  \begin{equation*}
    \frac{\partial V_{\arp}(w,s)}{\partial s}
    +\frac{\sigma^2}{2}\frac{\partial^2 V_{\arp}(w,s)}{\partial w^2}
    +\mu\frac{\partial V_{\arp}(w,s)}{\partial w}-\gamma V_{\arp}(w,s)
    +h(\arp(s)+w) \ge 0
  \end{equation*}
holds for almost every $(w,s) \in \R \times \R_+$.
\end{proposition}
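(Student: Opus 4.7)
The plan is to combine dynamic programming with an Ito/Dynkin argument, in the same spirit as the proof of Theorem~\ref{thm:PDE}, but exploiting only the fact that ``not adjusting'' is a feasible (rather than optimal) policy. Specifically, for any $(w,s)\in \R\times\R_+$ and any bounded stopping time $\tau$, the policy that does nothing on $[0,\tau]$ and then acts optimally from the resulting state is feasible for the control problem initiated at $\ts_s(\arp) + w$, so its cost is at least $V_\arp(w,s) = C^*(\ts_s(\arp)+w)$. The same strong-Markov manipulation used to obtain \eqref{equ:Int-PDE-small} therefore yields
\begin{equation*}
V_\arp(w,s) \leq \E\left[\int_0^\tau e^{-\gamma t} h\bigl(\arp(s+t)+w+W_t\bigr)\, dt + e^{-\gamma \tau}\, V_\arp(w+W_\tau,\, s+\tau)\right]
\end{equation*}
for every $(w,s)\in\R\times\R_+$, with no restriction to $\Xi_\arp$.

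Next I would establish almost-everywhere existence of $\partial V_\arp/\partial w$, $\partial^2 V_\arp/\partial w^2$, and $\partial V_\arp/\partial s$. Theorem~\ref{theo:c is L-convex} furnishes $L^\natural$-convexity of $C^{\infty,n}$, which in particular gives convexity of $w \mapsto C^{\infty,n}(\mathbf{x}^n + w\e)$; passing to the limit via Proposition~\ref{prop:C^n converge to C} then yields convexity of $w \mapsto V_\arp(w,s) = C^*(\ts_s(\arp)+w)$ for each fixed $s$, and one-dimensional convexity delivers $\partial V_\arp/\partial w$ and $\partial^2 V_\arp/\partial w^2$ for a.e.\ $w$. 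For the $s$ variable, Lipschitz continuity of $C^*$ from Proposition~\ref{prop:Lips-con of C^*} combined with the distance estimate on $s \mapsto \ts_s(\arp)$ used in the proof of Proposition~\ref{lemma:small-neighbour} gives local Lipschitz continuity of $s \mapsto V_\arp(w,s)$, hence a.e.\ differentiability by Rademacher's theorem. A Fubini argument then assembles these one-directional statements into existence of all three derivatives at almost every $(w,s)$.

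With regularity in hand, I would apply Dynkin's lemma to the displayed inequality, choosing $\tau$ as the exit time from a small neighborhood of $(w,s)$ on which the derivatives exist. This converts the inequality into
\begin{equation*}
0 \leq \E\int_0^\tau e^{-\gamma t}\Bigl[\Gamma V_\arp(w+W_t,\, s+t) - \gamma V_\arp(w+W_t,\, s+t) + h(\arp(s+t)+w+W_t)\Bigr] dt,
\end{equation*}
and dividing by $\E[\tau]$ and shrinking the neighborhood to any Lebesgue point of the integrand yields the desired pointwise inequality $\Gamma V_\arp - \gamma V_\arp + h \geq 0$. The main obstacle is justifying the Ito/Dynkin step at points where $V_\arp$ is not classically $C^{1,2}$; I would handle this by mollifying $V_\arp$ in the $w$-variable, applying the classical formula to the mollification, and passing to the limit using the one-dimensional convex-function estimates to control the second-order term. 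The $L^\natural$-convexity is essential throughout: it is exactly what supplies the convexity in $w$ needed for the second-order term to be well-defined as a Radon measure, while the Lipschitz bound of Proposition~\ref{prop:Lips-con of C^*} keeps the time-direction remainder manageable under the limit.
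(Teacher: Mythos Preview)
Your proposal is correct and follows essentially the same three-step skeleton as the paper's proof: establish the integral inequality $V_\arp(w,s)\le \E[\int_0^{\tau'}e^{-\gamma t}h(\arp(s+t)+w+W_t)\,dt]+\E[e^{-\gamma\tau'}V_\arp(w+W_{\tau'},s+\tau')]$ from the optimality condition~\eqref{optimality2}, secure almost-everywhere existence of the three partial derivatives, and conclude via Dynkin's formula.

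The one substantive difference is how you obtain $\partial V_\arp/\partial s$. The paper does not use the Lipschitz bound of Proposition~\ref{prop:Lips-con of C^*} and Rademacher; instead it observes that $\partial V_\arp/\partial w$ coincides (up to an additive constant) with $\partial C(\ts_s(\arp)+w,0,0)/\partial\down\xi$, which by part~3 of Lemma~\ref{prop:mono-boundary-con} is monotone in $s$, so the mixed partial $\partial^2 V_\arp/\partial s\,\partial w$ exists a.e., and from this the paper infers existence of $\partial V_\arp/\partial s$. Your Lipschitz/Rademacher route is arguably cleaner and more self-contained, while the paper's route exploits the $L^\natural$-convexity once more (via Lemma~\ref{prop:mono-boundary-con}) and avoids estimating $d(\ts_{s_1}(\arp),\ts_{s_2}(\arp))$. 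Both are valid; the paper also glosses over the mollification step you spell out for the Dynkin argument.
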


The above proposition and Lemma~\ref{lemma:nonnegative} show that each one of the three terms of \eqref{eq:optimality-heuristic} is always non-negative. 
Moreover, if $\arp \in \Xi_\arp$, the first term of \eqref{eq:optimality-heuristic} must be zero by Theorem~\ref{thm:PDE}.
Otherwise, by the definition of $\Xi_\arp$, at least one of the last two terms of \eqref{eq:optimality-heuristic} is zero.
This yields the following theorem.

\begin{theorem}
\label{thm:HJB-equation}
   For any $\arp \in \mathbb{D}$, $V_{\arp}(w,s)$ is a solution to the HJB equation \eqref{eq:optimality-heuristic} with $\Xi_\arp$ as the boundaries.
\end{theorem}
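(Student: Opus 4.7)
The plan is to read the HJB equation as an assertion that the three quantities
$A(\arp):=\Gamma V_{\arp}(0,0)-\gamma V_{\arp}(0,0)+h(\arp(0))$,
$B(\arp):=\partial C(\arp,0,0)/\partial\up\ja$, and
$C(\arp):=\partial C(\arp,0,0)/\partial\down\ja$
are all non-negative and that at least one of them vanishes, so the assembly step is to combine the previously stated results (Lemma~\ref{lemma:nonnegative}, Proposition~\ref{prop:PDE-positive}, and Theorem~\ref{thm:PDE}) with the geometric characterization of $\Xi_\arp$ through the boundary curves $\up w_\arp(\cdot)$ and $\down w_\arp(\cdot)$.

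First I would record non-negativity. Lemma~\ref{lemma:nonnegative} gives $B(\arp)\ge 0$ and $C(\arp)\ge 0$ directly. For $A(\arp)$, I would apply Proposition~\ref{prop:PDE-positive} at $(w,s)=(0,0)$; the proposition is stated ``almost every'' but only three non-negative terms are involved, so invoking it at the specific point $(0,0)$ is justified modulo a standard regularity argument (see the main obstacle below). This shows the three terms never have opposite sign, hence their extremum cannot be \emph{strictly} on the wrong side of zero.

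Second, I would argue that at least one of the three terms is exactly zero. Here I split into cases according to whether $(0,0)\in\Xi_\arp$. If $(0,0)\in\Xi_\arp$, that is $\up w_\arp(0)<0<\down w_\arp(0)$, then the definitions \eqref{eq:bound-H}--\eqref{eq:bound-L} combined with the monotonicity in part~3 of Lemma~\ref{prop:mono-boundary-con} force $B(\arp)>0$ and $C(\arp)>0$, and Theorem~\ref{thm:PDE} together with the small-neighbourhood representation in Proposition~\ref{lemma:small-neighbour} gives the PDE identity at $(0,0)$, i.e.\ $A(\arp)=0$. If instead $(0,0)\notin\Xi_\arp$, the equivalent description of $\Xi_\arp$ in terms of $\up w_\arp(0)$ and $\down w_\arp(0)$ together with the defining equalities \eqref{eq:bound-H}--\eqref{eq:bound-L} gives $B(\arp)=0$ when $0\le\up w_\arp(0)$ and $C(\arp)=0$ when $0\ge\down w_\arp(0)$; at least one of these two cases occurs. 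In either branch at least one term vanishes, so \eqref{eq:optimality-heuristic} holds at the arbitrary state $\arp$, and since every reachable state $(w,s)$ amounts to the initial state $\ts_s(\arp)+w$ this yields the boundary description of the solution on $\Xi_\arp$.

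The main obstacle I expect is promoting Proposition~\ref{prop:PDE-positive} and Theorem~\ref{thm:PDE} from their ``almost every'' statements to the specific point $(0,0)$. I would handle this by using the continuity provided by Lemma~\ref{lemma:continuous} together with the observation that $\Xi_\arp$ is open in $\R\times\R_+$ (the inequalities in its definition are strict), so that $(0,0)$ lies in the interior whenever it lies in $\Xi_\arp$; the local representation in Proposition~\ref{lemma:small-neighbour} is valid in a full neighbourhood of $(0,0)$ and Dynkin's lemma then produces the PDE pointwise at $(0,0)$. For states with $(0,0)\notin\Xi_\arp$, no PDE needs to hold, and the vanishing of $B(\arp)$ or $C(\arp)$ follows directly from the boundary definitions without any regularity subtlety.
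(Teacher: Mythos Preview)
Your proposal is correct and follows essentially the same route as the paper. The paper's proof is the short paragraph immediately preceding the theorem: Lemma~\ref{lemma:nonnegative} and Proposition~\ref{prop:PDE-positive} give non-negativity of all three terms, Theorem~\ref{thm:PDE} gives vanishing of the first term when $(0,0)\in\Xi_\arp$, and the definition of $\Xi_\arp$ forces one of the last two terms to vanish otherwise. You are in fact more careful than the paper in flagging the ``almost every'' caveat and proposing to resolve it via the pointwise neighbourhood representation of Proposition~\ref{lemma:small-neighbour}; the paper simply glosses over this point.
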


\section{The Optimal Control Policy}
\label{sec:optimal-policy-zero}

In this section, we will construct an optimal control policy. 
We will first define the set of states in which an upward or  downward adjustment is needed. 
We then examine the corresponding upward (downward) adjustment policy for a given downward (upward) adjustment policy, referred to as the one-sided reflection mapping in Section~\ref{sec:A One-side Reflection Mapping of Varying Bounds}, and construct a two-sided reflection mapping in Section~\ref{sec:A Two-side Reflection Mapping of Varying Bounds}.
We then show that the two-sided reflection mapping is an optimal control in Section~\ref{sec:Optimality of the Two-Side Reflection Policy}. 

We have demonstrated in Figure~\ref{Figure:curves} that an upward (downward) adjustment is needed at time $s$ if $w<\up w_\arp(s)$ ($w>\down w_\arp(s)$). 
Define 
\begin{equation}
  \label{eq:control-regions'}
  \up {\mathbb D}=\left\{
     \arp \in {\mathbb D}: \up w_{\arp}(0)<0
   \right\} 
   \mbox{ and }
   \down{\mathbb D}=\left\{
     \arp \in {\mathbb D}: \down w_{\arp}(0)>0
     \right\}
\end{equation}
to be the subsets of $\mathbb D$ in which an upward adjustment and downward adjustment are needed, respectively. 
Lemma~\ref{prop:mono-boundary-con} immediately leads to the following corollary.
\begin{corollary}
\label{lemma:apply-der-to-H-L} 
Let $\up{\bar{\mathbb D}}$ and $\down{\bar{\mathbb D}}$ be the complements of $\up{\mathbb D}$ and $\down{\mathbb D}$, respectively. 
\begin{enumerate}
  \item If $\arp\succeq \arp'$ and $\arp(\lt)=\arp'(\lt)$, then $\arp'\in \up {\bar{\mathbb D}}$ implies $\arp\in \up {\bar{\mathbb D}}$.
  \item If $\arp \in \up {\bar{\mathbb D}}$, then $\arp+a \in \up {\bar{\mathbb D}}$
for all $a>0$.
  \item If $\arp\succeq \arp'$, then $\arp\in\down {\bar{\mathbb D}}$ implies $\arp'\in\down {\bar{\mathbb D}}$.
  \end{enumerate}
\end{corollary}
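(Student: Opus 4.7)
The corollary is a direct consequence of Lemma~\ref{prop:mono-boundary-con} and the definitions of $\up{\mathbb D}$ and $\down{\mathbb D}$ via the thresholds $\up w_\arp(0)$ and $\down w_\arp(0)$. My plan is to treat all three parts as witness-transfer arguments. Membership of $\arp$ in $\up{\bar{\mathbb D}}$ (respectively $\down{\bar{\mathbb D}}$) amounts, after unpacking the $\max$ (respectively $\min$) in the definition of $\up w_\arp(0)$ (respectively $\down w_\arp(0)$), to the existence of a $w$ with a definite sign at which the upward (respectively downward) partial derivative of $C$ at the state $\arp+w$ vanishes. Each part of the corollary will then be proved by extracting such a witness from the hypothesis, applying the corresponding monotonicity statement from Lemma~\ref{prop:mono-boundary-con} to compare the partial derivative at the hypothesis state against the partial derivative at the state appearing in the conclusion, and finally using the non-negativity of the partial derivatives (Lemma~\ref{lemma:nonnegative}) to upgrade the resulting one-sided inequality into the equality needed to conclude that the transferred witness lies in the defining level set.

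Concretely, for part~(1), the uniform shift by the witness $w$ preserves both the pointwise order $\arp'\preceq\arp$ and the equality $\arp'(\ell)=\arp(\ell)$, so Lemma~\ref{prop:mono-boundary-con}(1) applies directly to the shifted pair $(\arp'+w,\arp+w)$ and, combined with non-negativity, yields a witness for $\arp$. For part~(2), I would split on the size of the witness $w$ for $\arp$ relative to $a$: when $w\ge a$, the shifted value $w-a$ is itself a witness at $\arp+a$ by direct substitution; when $w<a$, Lemma~\ref{prop:mono-boundary-con}(2) applied to $\arp+w$ with the remaining positive shift $a-w$ collapses, together with non-negativity, to the vanishing of the upward derivative at $\arp+a$, so $0$ serves as a witness there. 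For part~(3), which carries no endpoint constraint, Lemma~\ref{prop:mono-boundary-con}(3) applies directly to the pair $(\arp+w,\arp'+w)$ for any witness $w$ associated with the downward derivative, and the same non-negativity step closes the argument.

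I do not anticipate any genuine obstacle; the corollary is essentially a bookkeeping exercise once Lemma~\ref{prop:mono-boundary-con} is in hand. The only care required is to track the correct direction of each monotonicity inequality and to invoke Lemma~\ref{lemma:nonnegative} at precisely the right step so that a one-sided bound collapses to the equality characterizing the thresholds $\up w_\arp(0)$ and $\down w_\arp(0)$, thereby producing the transferred witness that certifies membership in $\up{\bar{\mathbb D}}$ or $\down{\bar{\mathbb D}}$ as required.
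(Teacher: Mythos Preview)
Your proposal is correct and matches the paper's own treatment: the paper offers no detailed proof at all, merely asserting that ``Lemma~\ref{prop:mono-boundary-con} immediately leads to the following corollary.'' Your witness-transfer argument---extracting a $w$ of the appropriate sign from the defining $\max$/$\min$ in~\eqref{eq:bound-H}--\eqref{eq:bound-L}, pushing it through the monotonicity statements of Lemma~\ref{prop:mono-boundary-con}, and invoking Lemma~\ref{lemma:nonnegative} to collapse the resulting one-sided inequality to an equality---is exactly the intended unpacking of that one-line citation.
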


If there exists a state belonging to both $\up{\mathbb D}$ and $\down{\mathbb D}$, then we need to make downward and upward adjustments at the same time. 
This can happen when it is too costly to hold a unit of inventory that is likely to be needed $\lt$ amount of time later, i.e., when the cost for holding a large amount of inventory is relatively high and the lead time is relatively long.  
When this happens, the optimal adjustment can be quite complicated.
Thus, we will focus on the case where $\up {\mathbb D} \cap \down {\mathbb D}=\O$ which holds in most real applications.  
The following lemma also provides an explicit sufficient condition for this to hold. 

\begin{lemma}
\label{assumption:apart-1} 
$\up {\mathbb D} \cap \down {\mathbb D}=\O$ if and only if $\down w_{\0}(\lt)> \up w_{\0}(\lt)$. 
A sufficient condition for $\up {\mathbb D} \cap \down {\mathbb D}=\O$ is $\up k+\down k> \frac{1-e^{-\gamma \lt}}{\gamma}\max\limits_{x>0}h'(x)$.
\end{lemma}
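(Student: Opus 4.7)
My plan proceeds in two stages. Stage one dispatches the ``if and only if'' using Proposition~\ref{prop:two-apart-bounds}. For the $(\Leftarrow)$ direction, suppose for contradiction that $\arp \in \up{\mathbb D} \cap \down{\mathbb D}$ while $\down w_\0(\lt) > \up w_\0(\lt)$. The contrapositive of Proposition~\ref{prop:two-apart-bounds} forces $\arp(\lt) \le \up w_\0(\lt)$ (upward is needed) and $\arp(\lt) \ge \down w_\0(\lt)$ (downward is needed), contradicting the assumed gap. For the $(\Rightarrow)$ direction I argue contrapositively: assuming $\down w_\0(\lt) \le \up w_\0(\lt)$, pick any $c$ in the (non-empty) interval $[\down w_\0(\lt),\up w_\0(\lt)]$ and consider the constant state $c\mathcal{I}$. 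Since $\ts_0(c\mathcal{I})=c\mathcal{I}$ and $\ts_\lt(\0)=\0$, the change of variables $v = c+w$ in \eqref{eq:bound-H}--\eqref{eq:bound-L} yields $\up w_{c\mathcal{I}}(0) = \up w_\0(\lt) - c$ and $\down w_{c\mathcal{I}}(0) = \down w_\0(\lt) - c$, from which $c\mathcal{I} \in \up{\mathbb D}\cap\down{\mathbb D}$ follows.

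Stage two establishes the sufficient condition by proving the slightly stronger statement that a simultaneous strictly positive upward \emph{and} downward adjustment is never optimal. Fix $\arp$ and any adjustment pair $(\up\xi,\down\xi)$ with $\delta := \min(\up\xi,\down\xi) > 0$, and compare $(\up\xi,\down\xi)$ with the reduced pair $(\up\xi-\delta,\down\xi-\delta)$. From \eqref{Phi}, the post-adjustment states $\Phi_{\up\xi,\down\xi}(\arp)$ and $\Phi_{\up\xi-\delta,\down\xi-\delta}(\arp)$ \emph{coincide} on $[\lt,\infty)$ (the net position after the lead time is the same), while the reduced pair leaves the state pointwise $\delta$ larger on $[0,\lt)$. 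Applying the optimal policy $\pi^{*}$ for $\Phi_{\up\xi,\down\xi}(\arp)$ to the state $\Phi_{\up\xi-\delta,\down\xi-\delta}(\arp)$ couples the two systems so that their content paths differ by exactly $\delta$ on $[0,\lt)$ and agree thereafter; under $\pi^{*}$ the discounted control-cost streams are identical, and the convexity of $h$ together with $\sup h'=\max_{x>0}h'(x)$ yields
\[
C^{*}\bigl(\Phi_{\up\xi-\delta,\down\xi-\delta}(\arp)\bigr) - C^{*}\bigl(\Phi_{\up\xi,\down\xi}(\arp)\bigr) \;\le\; \delta\cdot\max_{x>0}h'(x)\cdot\frac{1-e^{-\gamma\lt}}{\gamma}.
\]
Adding the $(\up k+\down k)\delta$ savings in control cost,
\[
C(\arp,\up\xi,\down\xi) - C(\arp,\up\xi-\delta,\down\xi-\delta) \;\ge\; \delta\!\left[(\up k+\down k) - \frac{1-e^{-\gamma\lt}}{\gamma}\max_{x>0}h'(x)\right] > 0,
\]
so the reduced pair strictly dominates. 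Hence no minimizer of $C(\arp,\cdot,\cdot)$ has both coordinates positive, and $\up{\mathbb D}\cap\down{\mathbb D}=\emptyset$.

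The main obstacle is the coupling estimate on $C^{*}\bigl(\Phi_{\up\xi-\delta,\down\xi-\delta}(\arp)\bigr)-C^{*}\bigl(\Phi_{\up\xi,\down\xi}(\arp)\bigr)$: one must carefully exploit that the two initial states differ \emph{only} on the bounded window $[0,\lt)$ so that, under a single common admissible policy, the Brownian-driven content paths carry identical discounted control-cost streams and differ in holding cost only over $[0,\lt)$. This localization to a window of total discounted length $(1-e^{-\gamma\lt})/\gamma$, combined with the derivative bound $h'\le\max_{x>0}h'(x)$, is precisely what produces the sharp factor appearing in the sufficient condition.
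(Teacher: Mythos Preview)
Your argument is correct and follows essentially the same approach as the paper. For the equivalence, the paper simply appeals to Figure~\ref{Figure:curves}, while you supply a rigorous version: the $(\Leftarrow)$ direction via the contrapositive of Proposition~\ref{prop:two-apart-bounds}, and the $(\Rightarrow)$ direction via the explicit constant state $c\mathcal I$ together with the translation identity $\up w_{c\mathcal I}(0)=\up w_\0(\lt)-c$ (and its downward analogue). For the sufficient condition, your coupling argument is exactly the paper's: reduce $(\up\xi,\down\xi)$ to $(\up\xi-\delta,\down\xi-\delta)$, observe the two post-adjustment states agree on $[\lt,\infty)$ and differ by $\delta$ on $[0,\lt)$, and compare the control saving $(\up k+\down k)\delta$ with the holding-cost increase bounded by $\delta\max_{x>0}h'(x)\int_0^\lt e^{-\gamma t}\,dt$.

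One small remark on closing the loop: from ``no minimizer of $C(\arp,\cdot,\cdot)$ has both coordinates positive'' to $\up{\mathbb D}\cap\down{\mathbb D}=\emptyset$ you implicitly use that if both partials vanish at $(0,0)$ then $C(\arp,\epsilon,\epsilon)-C(\arp,0,0)=o(\epsilon)$, which contradicts your lower bound $C(\arp,\epsilon,\epsilon)-C(\arp,0,0)\ge\epsilon\bigl[(\up k+\down k)-\tfrac{1-e^{-\gamma\lt}}{\gamma}\max_{x>0}h'(x)\bigr]$. The paper leaves this step equally implicit, so your level of detail matches. Also, you do not need an actual optimizer $\pi^*$ for the coupling (it has not yet been constructed at this point); an $\epsilon$-optimal policy for $\Phi_{\up\xi,\down\xi}(\arp)$ suffices and gives the same bound after letting $\epsilon\to 0$.
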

\begin{proof}
A direct result from Figure \ref{Figure:curves} is that a necessary and sufficient condition for non-simultaneous upward and downward adjustments is $\down w_{\0}(\lt)>\up w_{\0}(\lt)$. 
If these two subsets intersect and  $(\up \xi, \down \xi)$ are simultaneously adjusted, for a downward and an upward adjustment $(\up \xi-\epsilon, \down \xi-\epsilon)$, we increase the holding cost by no more than $\epsilon\int_0^{\lt} e^{-\gamma t}\max\limits_{x>0}h'(x) dt$ while reducing  the control cost by $(\up k+\down k)\epsilon$.
Since $\up k+\down k>\max\limits_{x>0}h'(x) \frac{1-e^{-\gamma \lt}}{\gamma}$, the total cost will decrease.
\end{proof}

\subsection{Reflection mappings} 
\label{ReflectionMappings}
We first identify the minimum upward (downward) adjustment needed to ensure $\arp_s \in \up {\bar{\mathbb D}}$ ($\arp_s \in \down {\bar{\mathbb D}}$) at all $s \geq 0$ for a given downward (upward) adjustment.
We refer to them as one-sided reflection mappings that will lead to the two-sided reflection policy, an optimal control.

\subsubsection{One-sided Reflection Mappings}
\label{sec:A One-side Reflection Mapping of Varying Bounds}
For a given sample path of the Brownian motion $\omega$ and initial state $\arp$, the state $\arp_s$ under policy $(\up Y,\down Y)$ can also be written as
\begin{equation*}
\arp_s=\ts_s(\arp)+\omega (s)-\down Y(s)+\ts_{s-\lt}(\up Y)\wedge \up Y(s)\mathcal {I}
\end{equation*}
by the dynamics (\ref{equ:dynamics}). 
For convenience, we use the superscripts $i,j \in \{\uparrow,\downarrow\}$, $i \neq j$, to indicate a pair of upward and downward adjustments. 
For any given $(\arp,Y^j,\omega)$, 
\begin{equation}
\label{def:up-one-side}
\Pi^i(\arp,Y^j,\omega) =\{Y^i:\arp _s=\ts_s(\arp)-\down Y(s)+\ts_{s-\lt}(\up Y)\wedge \up Y(s)\mathcal {I}+\omega(s)\in \bar{\mathbb D}^i,\ \textrm{for all }s\ge 0\}  
\end{equation}
is the set of all the feasible one-sided adjustments $Y^i$ that will ensure $\arp_s \in \bar{\mathbb D}^i$ at all $s$.
Recall that $\mathbb D$ is a functional set. For any subset $\emptyset \neq \mathbb S \subseteq \mathbb D$, let $\inf \mathbb {S}$ be a function that takes the infimum of all functions in $\mathbb S$ at any point, i.e., 
\begin{equation*}
  (\inf \mathbb {S})(t)=\inf\limits_{f\in \mathbb S}\{f(t)\} \quad \textrm{for any } t\ge 0.
\end{equation*}
By Lemma~14.2.2 in \cite{Whitt2002}, $\inf \mathbb S \in \mathbb D$.

\begin{definition}[One-sided reflection mappings]
\label{def:one-side reflection mapping}
We call $\psi^i$: $(\mathbb D,\mathbb D,\mathbb D) \to \mathbb D$ a  one-sided reflection mapping for  ${\mathbb D}^i$ if, for a given state $\arp$, sample path $\omega$ and $Y^j \in\mathbb D,$
\begin{eqnarray*}
  \psi^i(\arp,Y^j,\omega)=\inf {\Pi^i}(\arp,Y^j,\omega).
\end{eqnarray*}
\end{definition}
 

Next, we show the existence of the one-sided reflection mappings in Proposition~\ref{prop:existence of one-side reflection mapping} and provide some properties of the mappings in Proposition~\ref{prop:order and continuous on one-side}.
\begin{proposition}[Existence of the reflection maps]  
\label{prop:existence of one-side reflection mapping}
 $\psi^i(\arp,Y^j,\omega)$ exists and belongs to $\Pi^i(\arp,Y^j,\omega)$.
\end{proposition}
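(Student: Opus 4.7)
The plan is to establish the two claims in turn: (a) $\Pi^i(\arp,Y^j,\omega)$ is non-empty so that the pointwise infimum $\psi^i=\inf\Pi^i$ is well-defined, and (b) $\psi^i$ itself lies in $\Pi^i(\arp,Y^j,\omega)$.

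For (a), I would produce a concrete element of $\Pi^i$ using Corollary~\ref{lemma:apply-der-to-H-L}. For $i=\uparrow$, part~(2) of the corollary says $\bar{\mathbb D}^\uparrow$ is stable under the addition of a positive constant. So along the given sample path $\omega$ and given $Y^j=\down Y$, I would take $Y^\uparrow$ to be a non-decreasing right-continuous function large enough that the resulting state $\arp_s$ dominates pointwise some fixed reference state in $\bar{\mathbb D}^\uparrow$ for every $s\ge 0$ (e.g.\ a function growing fast enough to absorb $\down Y(s)-\omega(s)$ on every finite window, plus a sufficient buffer). The case $i=\downarrow$ is symmetric using part~(3).

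For (b), $\psi^i$ is non-decreasing as a pointwise infimum of non-decreasing functions and belongs to $\mathbb{D}$ by Lemma~14.2.2 of \cite{Whitt2002}, cited in the paper just above the definition. The substantive step is showing $\arp_s^{\psi^i}\in\bar{\mathbb D}^i$ for every $s\ge 0$. Because the dynamics \eqref{equ:dynamics} are affine in $Y^i$, for any $Y\in\Pi^i$ the state $\arp_s^Y$ differs from $\arp_s^{\psi^i}$ by an explicit translation of magnitude controlled by $|Y(\tau)-\psi^i(\tau)|$ on $\tau\in[\max(0,s-\lt),s]$. For each $n$ and each $\tau$ in a countable dense set of $\R_+$, I would pick an element $Y_{n,\tau}\in\Pi^i$ with $Y_{n,\tau}(\tau)<\psi^i(\tau)+1/n$, combine these via a diagonal argument into a sequence $\{Y_n\}\subset\Pi^i$ whose state trajectories $\arp_s^{Y_n}$ converge to $\arp_s^{\psi^i}$ in the metric $d$ by dominated convergence, and then invoke topological closedness of $\bar{\mathbb D}^i$ in $(\mathbb D,d)$, which follows from the continuity of the partial derivative in $\arp$ established in Lemma~\ref{lemma:continuous}.

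The main obstacle is constructing the approximating sequence $\{Y_n\}$ in step~(b). The set $\Pi^i$ is not obviously a lattice: Corollary~\ref{lemma:apply-der-to-H-L}(2) gives stability of $\bar{\mathbb D}^\uparrow$ only under upward shifts, so the pointwise minimum of two elements of $\Pi^\uparrow$ need not remain in $\Pi^\uparrow$. Consequently an approximating sequence cannot be built by taking finite pointwise infima inside $\Pi^i$, and the argument must rely essentially on the topological closedness of $\bar{\mathbb D}^i$ inherited from Lemma~\ref{lemma:continuous}, carefully coordinated with a diagonalization across a countable dense set of the lagged arguments appearing in $\arp_s$ so that a single sequence $Y_n$ simultaneously approximates $\psi^i$ at all of them.
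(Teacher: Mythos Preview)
Your step~(a) is essentially the paper's: the paper exhibits an explicit element of $\Pi^\uparrow$ via the running-supremum formula $g(t)=\sup_{0\le u\le t}\{\up w_\0(\ell)-\omega(u)+\down Y(u)-\arp(u+\ell)\}$, which forces $\arp_s(\ell)\ge \up w_\0(\ell)$ for every $s$ and hence $\arp_s\in\bar{\mathbb D}^\uparrow$ by Proposition~\ref{prop:two-apart-bounds}.

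For step~(b), your route is both more complicated than the paper's and, as you yourself flag, incomplete. The paper does \emph{not} build a globally approximating sequence $\{Y_n\}\subset\Pi^i$, and it does not invoke topological closedness of $\bar{\mathbb D}^i$. Instead it argues pointwise in $s$: for each fixed $s$ and $\epsilon>0$, pick a single $Y'\in\Pi^\uparrow$ with $Y'(s)\le\psi^\uparrow(s)+\epsilon$, which exists immediately from the definition of the pointwise infimum. The resulting state $\arp_s^{Y'}$ lies in $\bar{\mathbb D}^\uparrow$, and one then compares it to the state $\arp_s^{\psi^\uparrow}$ using the \emph{order-theoretic} closure properties of $\bar{\mathbb D}^\uparrow$ furnished by parts~1 and~2 of Corollary~\ref{lemma:apply-der-to-H-L}. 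Those monotonicity properties are strictly stronger than mere closedness in~$d$: they transfer membership in $\bar{\mathbb D}^\uparrow$ from one profile to a comparable one directly, without any limiting procedure. Since $s$ and $\epsilon$ are arbitrary, one concludes $\psi^\uparrow\in\Pi^\uparrow$.

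The obstacle you name---combining approximants $Y_{n,\tau}$ at different $\tau$ into a single $Y_n\in\Pi^i$ without a lattice structure---is real, and your proposal does not resolve it; ``carefully coordinated with a diagonalization'' is a description of the difficulty, not a solution. The paper's pointwise argument sidesteps this entirely: because Corollary~\ref{lemma:apply-der-to-H-L} controls how membership in $\bar{\mathbb D}^\uparrow$ depends monotonically on the profile (with the value at $\ell$ playing a distinguished role), approximation of $\psi^\uparrow$ at the single endpoint $s$ suffices, and no global approximating sequence is ever required.
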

\begin{proof}
Since the proofs are similar, we only prove the result for $\up \psi(\arp,\down Y,\omega)$. 
We first claim  that $\up \Pi(\arp,\down Y,\omega)$ is non-empty as an adjustment $g(t)=\sup\limits_{0 \le u \le t}\{\ulb-\omega(u)+\down Y(u)-\arp(u+\lt)\} \in \up \Pi(\arp,\down Y,\omega)$. 
This is because
\begin{equation*}
  \arp_s(\lt)=\arp(s+\lt)+\omega(s)-\down Y(s)+g(s) \geq \ulb, \ \textrm{for any } s \ge 0
\end{equation*}
and by Proposition~\ref{prop:two-apart-bounds}, $\frac{\partial C(\arp_s,0,0)}{\partial \up \xi}=0$. 
So $\up \Pi(\arp,\down Y,\omega)$ at least has one element. 

It remains to be shown that $\up \psi(\arp,\down Y,\omega) \in \up \Pi(\arp,\down Y,\omega)$. 
For any fixed $\epsilon>0$ and $s \ge 0$, there exists $Y^{\uparrow'} \in \up \Pi(\arp,\down Y,\omega)$ such that $Y^{\uparrow'} \succeq \up \psi(\arp,\down Y,\omega)$ and $Y^{\uparrow'}(s) \leq \up \psi(s)+\epsilon$.
Thus, $\ts_s(\arp)+w(s)-\down Y(s)+\ts_{s-\lt}(Y^{\uparrow'})\wedge Y^{\uparrow'}(s)\mathcal I \in \up {\bar{\mathbb D}}$. 
Then, by parts~1 and 2 of Corollary~\ref{lemma:apply-der-to-H-L}, we know that 
$\ts_s(\arp)+w(s)-\down Y(s)+\ts_{s-\lt}(\up \psi(s)+\epsilon)\wedge (\up \psi(s)+\epsilon) \mathcal I \in \up {\bar{\mathbb D}}$.
Because $s$ and $\epsilon$ are arbitrary, $\up \psi(\arp,\down Y,\omega) \in \up \Pi(\arp,\down Y,\omega)$.
\end{proof}

\begin{proposition}
\label{prop:order and continuous on one-side}
Let $\arp$ be the initial state and $\omega$ a sample path. 
\begin{enumerate}
\item 
$\up \psi(\arp,\down Y,\omega)$ decreases in $\down Y$ and $\down \psi(\arp,\up Y,\omega)$ increases in $\up Y$. 

\item 
$\sup\limits_{0 \le u \le t}|\psi^i(\arp,Y^j_1,\omega)(u)-\psi^i(\arp,Y^j_2,\omega)(u)|  \leq \sup\limits_{0 \le u \le t}|Y^j_1(u)-Y^j_2(u)|$ for any given $t \ge 0$, hence $\psi^i(\arp,Y^j,\omega)$ is Lipschitz continuous in $Y^j$ under the uniform norm.
\end{enumerate}
\end{proposition}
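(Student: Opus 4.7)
The plan is to derive both parts from the shift-invariance of $\up{\bar{\mathbb D}}$ (and symmetrically $\down{\bar{\mathbb D}}$) supplied by Corollary~\ref{lemma:apply-der-to-H-L}, avoiding any explicit Skorokhod sup formula. The essential observation from the dynamics in (\ref{equ:dynamics}) is that, with the same $\up Y$, replacing $\down Y_1$ by $\down Y_2$ alters the state $\arp_s$ by the \emph{uniform} (in $u$) shift $\down Y_1(s)-\down Y_2(s)$ at every time $s$; this is exactly the form of shift that part~2 of Corollary~\ref{lemma:apply-der-to-H-L} can absorb. The same uniform-shift structure will also drive the Lipschitz estimate through a coupling by an initial impulse.

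For part~1, fix $\down Y_1 \preceq \down Y_2$ and take any $\up Y \in \up\Pi(\arp, \down Y_2, \omega)$. The state under $(\up Y, \down Y_2)$ lies in $\up{\bar{\mathbb D}}$ at every $s\ge 0$, and the state under $(\up Y, \down Y_1)$ equals this state plus the uniform upward shift $\down Y_2(s)-\down Y_1(s)\ge 0$, so by part~2 of Corollary~\ref{lemma:apply-der-to-H-L} it again lies in $\up{\bar{\mathbb D}}$. Hence $\up Y \in \up\Pi(\arp, \down Y_1, \omega)$, giving the set inclusion $\up\Pi(\arp, \down Y_2, \omega) \subseteq \up\Pi(\arp, \down Y_1, \omega)$, and taking infima yields the claimed monotonicity of $\up\psi$ in $\down Y$. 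The corresponding statement for $\down\psi$ in $\up Y$ follows by the same argument with the roles swapped and part~3 of Corollary~\ref{lemma:apply-der-to-H-L} in place of part~2.

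For part~2, fix $t\ge 0$ and set $\Delta = \sup_{0\le u\le t}|\down Y_1(u)-\down Y_2(u)|$. The candidate $\up Y'(s) = \up\psi(\arp, \down Y_1, \omega)(s) + \Delta$, obtained by appending an initial impulse of size $\Delta$ at $s=0$ to the reflection under $\down Y_1$, is non-decreasing, adapted, and admissible. Substituting into (\ref{equ:dynamics}), the state under $(\up Y', \down Y_2)$ equals the state under $(\up\psi(\arp, \down Y_1, \omega), \down Y_1)$ plus the non-negative uniform shift $\Delta - (\down Y_2(s) - \down Y_1(s)) \ge 0$ in the tail coordinate. By part~2 of Corollary~\ref{lemma:apply-der-to-H-L} this shifted state remains in $\up{\bar{\mathbb D}}$, so $\up Y' \in \up\Pi(\arp, \down Y_2, \omega)$, and hence $\up\psi(\arp, \down Y_2, \omega)(s) \le \up\psi(\arp, \down Y_1, \omega)(s) + \Delta$ for $s\le t$. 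Swapping $\down Y_1$ and $\down Y_2$ gives the symmetric bound, and taking the supremum over $s\in[0,t]$ yields the Lipschitz inequality; the analogous bound for $\down\psi$ follows by symmetry.

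The main obstacle is that the initial impulse of size $\Delta$ added to $\up Y$ at time $0$ propagates to the head coordinate of the state only after the lead time $\lt$. For $s < \lt$ the head of $\arp'_s$ is therefore not shifted uniformly by $\Delta$, and part~2 of Corollary~\ref{lemma:apply-der-to-H-L} cannot be applied pointwise in $u$. To close this gap I plan to invoke Proposition~\ref{prop:two-apart-bounds}, which shows that membership in $\up{\bar{\mathbb D}}$ is governed by the tail value $\arp(\lt)$ alone, and to verify directly that $\arp'_s(\lt) \ge \up w_\0(\lt)$; this follows from the corresponding bound for $\arp^{(1)}_s(\lt)$ under $(\up\psi(\arp, \down Y_1, \omega), \down Y_1)$ once the $\Delta$-impulse is accounted for, and it aligns the coupling with the natural Skorokhod-type reflection at the boundary $\up w_\0(\lt)$.
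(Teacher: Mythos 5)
Part~1 of your argument is sound and coincides with the paper's own proof: passing from the larger to the smaller $Y^{j}$ shifts the state at every time $s$ by a non-negative constant uniformly in $u$, so Corollary~\ref{lemma:apply-der-to-H-L}(2) yields the set containment between the $\Pi^i$'s and the monotonicity follows by taking pointwise infima. (Do double-check the direction you obtain: the containment $\up\Pi(\arp,\down Y_2,\omega)\subseteq\up\Pi(\arp,\down Y_1,\omega)$ for $\down Y_1\preceq\down Y_2$ gives $\up\psi(\arp,\down Y_1,\omega)\preceq\up\psi(\arp,\down Y_2,\omega)$, i.e.\ the infimum over the smaller set is larger; you report this as ``the claimed monotonicity'' without verifying the sign.)

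Part~2 has a genuine gap. You correctly flag the obstruction: the impulse of size $\Delta$ added to $\up Y$ at time $0$ does not shift the state uniformly, because for $s+u<\lt$ the coordinate $\arp_s(u)=\arp(s+u)+\omega(s)-\down Y(s)$ involves $\up Y$ not at all, while the change from $\down Y_1$ to $\down Y_2$ pushes exactly those head coordinates down by $\down Y_2(s)-\down Y_1(s)$. So the state under $(\up\psi_1+\Delta,\down Y_2)$ is not dominated by a uniform upward translate of the reflected state $\arp^{(1)}_s$, and Corollary~\ref{lemma:apply-der-to-H-L}(2) cannot be applied coordinatewise. The repair you propose, however, does not close the gap. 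Proposition~\ref{prop:two-apart-bounds} is a one-directional \emph{sufficient} condition (tail above $\up w_\0(\lt)$ implies no upward adjustment is needed), not a characterization of $\up{\bar{\mathbb D}}$ by the tail coordinate. In particular, $\arp^{(1)}_s\in\up{\bar{\mathbb D}}$ does \emph{not} imply $\arp^{(1)}_s(\lt)\geq\up w_\0(\lt)$: by Lemma~\ref{prop:mono-boundary-con}(1) a profile with a sufficiently low head (relative to a fixed tail) has a large derivative $\frac{\partial C}{\partial\up\xi}$ and hence lies in $\up{\bar{\mathbb D}}$ even with $\arp(\lt)<\up w_\0(\lt)$; such a profile can already be the initial state, so the claimed tail lower bound fails at $s=0$. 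The one-sided reflection is therefore not a Skorokhod reflection of the tail coordinate at the level $\up w_\0(\lt)$, and the ``verify $\arp'_s(\lt)\geq\up w_\0(\lt)$'' step your coupling depends on is not available. (There is also a secondary issue: $\Delta$ is a supremum over $[0,t]$, so the candidate $\up Y'$ need not lie in $\up\Pi(\arp,\down Y_2,\omega)$, which requires admissibility for all $s\geq 0$.) The paper sidesteps the global-impulse difficulty entirely by a contradiction argument: it takes $\tau$ to be the first time $|g_2-g_1|>a_0$ and modifies $g_2$ only on the short window $[\tau,\tau+\epsilon)$, replacing it by $g_1+a_0$. The modified path is checked to remain in $\up\Pi(\arp,\down Y_2,\omega)$ separately on the three time ranges, using Corollary~\ref{lemma:apply-der-to-H-L}(1)--(2) where the comparison is now local and tractable, contradicting the minimality of $g_2$. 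You would need some analogue of that localization, not a global tail-level reflection formula.
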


\begin{proof}
We will only prove the results for $\up \psi(\arp,\down Y,\omega)$.
\begin{enumerate}
\item 
Suppose $\down Y_1 \succeq \down Y_2$.
For any  $Y \in\up \Pi(\arp,\down Y_1,\omega)$, $\ts_s(\arp)+\ts_{(s-\lt)}(Y) \wedge Y(s)\mathcal I-\down Y_1(s)+\omega(s) \in \up {\bar{\mathbb D}}$. 
By part 2 of Corollary~\ref{lemma:apply-der-to-H-L}, $\ts_s(\arp)+\ts_{(s-\lt)}(Y )\wedge Y(s)\mathcal I-\down Y_2(s)+\omega(s) \in \up {\bar{\mathbb D}}$ for all $s \ge 0$ and consequently $Y \in \up \Pi(\arp,\down Y_2,\omega)$.
Thus, $\Pi(\arp,\down Y_1,\omega) \subseteq \Pi(\arp,\down Y_2,\omega)$ and $\up \psi(\arp,\down Y_1,\omega)  \preceq \up \psi(\arp,\down Y_2,\omega)$.
\item
We prove this part by contradiction. 
For convenience, let  $a_0=\sup\limits_{0 \le u \le t}|\down Y_1(u)-\down Y_2(u)|<\infty $ and  $g_1=\up \psi(\arp,\down Y_1,\omega)$ and $g_2=\up \psi(\arp,\down Y_2,\omega)$.
Suppose that the inequality does not hold. 
Define $\tau:=\inf \{s \geq 0: |g_2(s)-g_1(s)|>a_0\}$.
Without loss of generality, we assume $g_2(\tau) \ge g_1(\tau)+a_0$.
Because $g_1$ and $g_2$ are right-continuous, there exists an $\epsilon < \lt$ such that $g_2(s)-g_1(s)> a_0$  for $s \in (\tau,\tau+\epsilon]$. 
Consider the following function
\begin{equation*}
  g_2'(u)=\left\{ 
 \begin{array} {ll}
  g_1(u)+a_0 & u \in [\tau,\tau+\epsilon),\\
  g_2(u) & \textrm{ otherwise }.
  \end{array}
  \right.
\end{equation*} 
Then, for all $t<\tau$,  $g'_2(t)=g_2(t) \le g_1(\tau)+a_0=g'_2(\tau)$ and $g'_2(\tau+\epsilon)=g_2(\tau+\epsilon) > g_1(\tau+\epsilon)+a_0$. 
Thus,  $g'_2$ is also non-decreasing and strictly less than $g_2$. Next, we show that $g'_2\in\Pi(\arp,\down Y_2,\omega)$ or equivalently, for all $s\ge 0$,
\begin{equation}
  \ts_s(\arp)+\ts_{(s-\lt)}g_2' \wedge g'_2(s)\mathcal I-\down Y_2(s)+\omega(s) \in
\up {\bar{\mathbb D}}
\label{eq:modify-g}
\end{equation} 
and hence, we have a contradiction.  
Note that, $\ts_s(\arp)+\ts_{(s-\lt)}g_k \wedge g_k(s)\mathcal I-\down Y_k(s)+\omega(s) \in \up {\bar{\mathbb D}}$ for $k=1,2$.
\begin{itemize}
\item For $0 \le s < \tau$, $\ts_{(s-\lt)}(g_2') \wedge g'_2(s)\mathcal I=\ts_{(s-\lt)}(g_2) \wedge g_2(s)\mathcal I$, and \eqref{eq:modify-g} holds.

\item For $ \tau \le s \le \tau+\epsilon$,  $g_2'(s)=g_1(s)+a_0$ and $\ts_{(s-\lt)}(g_2') \wedge g'_2(s)\mathcal I+a_0 \succeq \ts_{(s-\lt)}(g_1) \wedge g_1(s)\mathcal I$.
By part 1 of Corollary~\ref{lemma:apply-der-to-H-L},
$\ts_s(\arp)+\ts_{(s-\lt)}(g_2') \wedge g'_2(s)\mathcal I-a_0-\down Y_1(s)+\omega (s) \in \up {\bar{\mathbb D}}$.
Since $a_0+\down Y_1(s) \ge \down Y_2(s)$, \eqref{eq:modify-g} holds by part 2 of Corollary~\ref{lemma:apply-der-to-H-L}.

\item For $s > \tau+\epsilon$,  $g_2'(s)=g_2(s)$ and $\ts_{(s-\lt)}(g_2') \wedge g'_2(s) \mathcal I\preceq \ts_{(s-\lt)}(g_2) \wedge g_2(s) \mathcal I$. 
By part 1 of Corollary~\ref{lemma:apply-der-to-H-L}, \eqref{eq:modify-g} holds.
\end{itemize}
\end{enumerate}
\end{proof}

Due to the ``$\inf$'' operator, $\psi^i(\arp,Y^j,\omega)(t)$ increases in $t$ only when $\arp_t$ hits the boundary of ${\mathbb D}^i$, i.e., $\frac{\partial C(\arp_t,0,0)}{\partial \xi^i} =0$, which is summarized in the following proposition. 
\begin{proposition}[Complementarity of the reflection mappings]
  \label{prop:characterization of the mapping}
  If $\arp_t$ is the state at time $t$ under policy $\psi^i$ for a given $Y^j$ and initial state $\arp$, then $\int_a^{b} \frac{\partial C(\arp_t,0,0)}{\partial \xi^i} d \psi^i(\arp,Y^j,\omega)(t)=0$ for any $0 \leq a \leq b \leq \infty$.
\end{proposition}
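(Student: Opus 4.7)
The plan is to argue by contradiction from the minimality built into $\psi^i=\inf \Pi^i$ (Definition~\ref{def:one-side reflection mapping} and Proposition~\ref{prop:existence of one-side reflection mapping}). Suppose that $\int_a^b \frac{\partial C(\arp_t,0,0)}{\partial \xi^i}\,d\psi^i(\arp,Y^j,\omega)(t) > 0$ for some $0\le a\le b$. The integrand is non-negative by Lemma~\ref{lemma:nonnegative} and continuous in $\arp$ by Lemma~\ref{lemma:continuous}, while $t\mapsto \arp_t$ is right-continuous by construction. These facts, together with the regularity of the finite measure $d\psi^i$ on any compact interval, let me extract a point $t_0$ in the support of $d\psi^i$ together with constants $\epsilon,\eta>0$ such that $\psi^i(t_0+\eta) > \psi^i(t_0)$ and $\frac{\partial C(\arp_t,0,0)}{\partial \xi^i}\ge \epsilon$ for every $t\in[t_0,t_0+\eta]$; by shrinking $\eta$ first, the increment $\Delta:=\psi^i((t_0+\eta)^-)-\psi^i(t_0)>0$ can be made as small as I wish.

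Next I would construct a challenger $\tilde Y^i\in\mathbb D$ by freezing $\psi^i$ on the window: set $\tilde Y^i(t)=\psi^i(t)$ for $t\notin[t_0,t_0+\eta)$ and $\tilde Y^i(t)=\psi^i(t_0)$ for $t\in[t_0,t_0+\eta)$. Then $\tilde Y^i$ is non-decreasing and right-continuous (the only extra jump is at $t_0+\eta$, of size $\Delta$ plus whatever jump $\psi^i$ already had there), and it lies strictly below $\psi^i$ on a $d\psi^i$-positive subset of $(t_0,t_0+\eta)$. Hence once I verify $\tilde Y^i\in\Pi^i(\arp,Y^j,\omega)$, the relation $\tilde Y^i\not\succeq\psi^i$ contradicts $\psi^i=\inf\Pi^i$ and the proof is finished.

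The heart of the argument is the feasibility check for $\tilde Y^i$. From the dynamics \eqref{equ:dynamics}, the state perturbation $|\tilde\arp_t - \arp_t|$ is pointwise bounded by $\Delta$ and supported on $t\in[t_0,t_0+\eta)$ when $i=\downarrow$ (downward moves have no lead time) and on $t\in[t_0,t_0+\eta+\lt)$ when $i=\uparrow$. Outside that window $\tilde\arp_t=\arp_t\in\bar{\mathbb D}^i$ because $\psi^i\in\Pi^i$. On $[t_0,t_0+\eta)$ in either case, $d(\tilde\arp_t,\arp_t)\le \Delta/\gamma$; combining this with the uniform lower bound $\frac{\partial C(\arp_t,0,0)}{\partial \xi^i}\ge\epsilon$ and the continuity in Lemma~\ref{lemma:continuous} gives $\frac{\partial C(\tilde\arp_t,0,0)}{\partial \xi^i}\ge\epsilon/2>0$ once $\Delta$ is chosen small enough. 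On the residual upward tail $[t_0+\eta,t_0+\eta+\lt)$ that arises only for $i=\uparrow$, I have $\tilde Y^\uparrow(t)=\psi^\uparrow(t)$, so $\tilde\arp_t(\lt)=\arp_t(\lt)$ while still $\tilde\arp_t\preceq\arp_t$; Part~1 of Lemma~\ref{prop:mono-boundary-con} then yields $\frac{\partial C(\tilde\arp_t,0,0)}{\partial \up \xi}\ge\frac{\partial C(\arp_t,0,0)}{\partial \up \xi}\ge 0$, placing $\tilde\arp_t\in\up{\bar{\mathbb D}}$ on that subinterval too.

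The main obstacle is precisely this split of the feasibility check over two subintervals. On $[t_0,t_0+\eta)$ the derivative at $\arp_t$ is strictly positive, so a continuity argument can absorb a small state perturbation, but on the lead-time tail $[t_0+\eta,t_0+\eta+\lt)$ the derivative at $\arp_t$ may vanish and the continuity route breaks down. What rescues the argument is that the freezing construction keeps $\arp_t(\lt)$ unchanged on the tail, which is exactly the hypothesis needed to invoke the ordered monotonicity in Part~1 of Lemma~\ref{prop:mono-boundary-con}. The interplay between the freezing window and the lead time $\lt$ is what makes the single construction feasible on both subintervals simultaneously, delivering the contradiction with minimality.
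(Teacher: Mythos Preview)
Your approach mirrors the paper's: argue by contradiction from minimality, locate a time where the derivative is strictly positive yet $\psi^i$ is increasing, build a strictly smaller candidate control, and verify it still lies in $\Pi^i$. Your feasibility check on the lead-time tail $[t_0+\eta,t_0+\eta+\lt)$ is in fact more explicit than the paper's, which simply appeals to the argument of Proposition~\ref{prop:order and continuous on one-side}; your use of Lemma~\ref{prop:mono-boundary-con} Part~1 there (same $\lt$-coordinate, smaller profile, hence larger upward derivative) is exactly the right mechanism.

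There is, however, a genuine gap in your extraction step. You assert that one can choose $t_0$ in the support of $d\psi^i$ with both $\frac{\partial C(\arp_t,0,0)}{\partial\xi^i}\ge\epsilon$ on $[t_0,t_0+\eta]$ \emph{and} $\psi^i(t_0+\eta)>\psi^i(t_0)$. The second condition fails when the only mass of $d\psi^i$ on $\{t:\frac{\partial C(\arp_t,0,0)}{\partial\xi^i}>0\}$ sits at an isolated atom $t_0$: then $\psi^i$ is constant on $[t_0,t_0+\eta_0)$ for some $\eta_0>0$, so your frozen $\tilde Y^i$ coincides with $\psi^i$ and yields no contradiction. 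Right-continuity of $t\mapsto\frac{\partial C(\arp_t,0,0)}{\partial\xi^i}$ gives you a right-neighborhood where the derivative stays positive, but it does not force $\psi^i$ to increase there. The paper anticipates exactly this and splits into two cases: when $\psi^i(t_0)>\psi^i(t_0-)$ it \emph{reduces} the jump, setting $Y^{i\prime}(u)=\psi^i(u)-\epsilon$ on $[t_0,t_0+\delta)$ rather than freezing at $\psi^i(t_0)$; the continuous-increase case is handled by your freezing construction. Adding this jump-reduction branch (or, equivalently, freezing at a value strictly below $\psi^i(t_0)$ when an atom is present) closes the gap, and the rest of your argument then goes through unchanged.
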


\subsubsection{A Two-sided Reflection Mapping}
\label{sec:A Two-side Reflection Mapping of Varying Bounds}

We are now ready to define a two-sided reflection mapping, and show its existence and uniqueness.
\begin{definition}[A two-sided reflection mapping]
\label{def:two-side reflection mapping}
For a given $\arp$ and Brownian motion sample path $\omega$, $(\up Y,\down Y)$ is called a two-sided reflection mapping if 
  \begin{eqnarray}
  && \up Y=\up \psi(\arp,\down Y,\omega), \label{ite:down-to-up} \\
  && \down Y=\down \psi(\arp,\up Y,\omega). \label{ite:up-to-down}
   \end{eqnarray}
  \end{definition}
\begin{proposition}
\label{prop:existence of two-side reflection mapping}
For any given $\arp$ and Brownian motion sample path $\omega$, there exists a unique two-sided reflection mapping $(Y^{\uparrow*},Y^{\downarrow*})$.
\end{proposition}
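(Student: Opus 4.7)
The plan is to construct the two-sided reflection $(Y^{\uparrow*}, Y^{\downarrow*})$ via a monotone fixed-point iteration alternating the two one-sided reflection maps, and then to deduce uniqueness by combining the nonexpansive Lipschitz estimate from Proposition~\ref{prop:order and continuous on one-side} with the minimality built into the definition $\psi^i = \inf \Pi^i$.

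For existence I would set $Y^{\uparrow(0)} \equiv 0$ and iterate
\[
Y^{\downarrow(k)} = \down\psi(\arp, Y^{\uparrow(k)}, \omega), \qquad Y^{\uparrow(k+1)} = \up\psi(\arp, Y^{\downarrow(k)}, \omega), \qquad k \ge 0.
\]
Part~1 of Proposition~\ref{prop:order and continuous on one-side} (monotonicity of each one-sided reflection in the other control) propagates inductively to give that $\{Y^{\uparrow(k)}\}$ and $\{Y^{\downarrow(k)}\}$ are pointwise monotone sequences in $\mathbb{D}$. To bound the iterates from above I would exhibit an explicit feasible pair $(\bar Y^{\uparrow}, \bar Y^{\downarrow})$ that drives the state into the interior of $\bar{\mathbb D}^{\uparrow} \cap \bar{\mathbb D}^{\downarrow}$; the minimality of $\inf \Pi^i$ then forces every iterate below this envelope. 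Monotone boundedness produces pointwise limits $Y^{\uparrow*}, Y^{\downarrow*} \in \mathbb{D}$, and the Lipschitz bound in part~2 of Proposition~\ref{prop:order and continuous on one-side},
\[
\sup_{0 \le u \le t} \bigl| \up\psi(\arp, Y^{\downarrow(k)}, \omega)(u) - \up\psi(\arp, Y^{\downarrow*}, \omega)(u) \bigr| \le \sup_{0 \le u \le t} \bigl| Y^{\downarrow(k)}(u) - Y^{\downarrow*}(u) \bigr|,
\]
combined with a Dini-type uniform-convergence argument for monotone nondecreasing \rcll{} functions, lets me pass to the limit in the recursion and verify \eqref{ite:down-to-up}--\eqref{ite:up-to-down}.

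For uniqueness, suppose $(Y_1^{\uparrow}, Y_1^{\downarrow})$ and $(Y_2^{\uparrow}, Y_2^{\downarrow})$ are both two-sided reflection mappings. Since the iteration above starts from the pointwise-smallest admissible pair, its limit is the componentwise minimal fixed point $(Y^{\uparrow*}, Y^{\downarrow*})$ with $Y^{\uparrow*} \preceq Y_i^{\uparrow}$ and $Y^{\downarrow*} \preceq Y_i^{\downarrow}$ for $i=1,2$. Applying the Lipschitz inequality twice gives
\[
\sup_{0 \le u \le t} \bigl(Y_i^{\uparrow} - Y^{\uparrow*}\bigr)(u) \le \sup_{0 \le u \le t} \bigl(Y_i^{\downarrow} - Y^{\downarrow*}\bigr)(u) \le \sup_{0 \le u \le t} \bigl(Y_i^{\uparrow} - Y^{\uparrow*}\bigr)(u),
\]
forcing these suprema to coincide. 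Combined with the complementarity from Proposition~\ref{prop:characterization of the mapping}, any strictly positive excess of $Y_i$ over $Y^{*}$ could be shaved off without leaving $\bar{\mathbb D}^{\uparrow} \cap \bar{\mathbb D}^{\downarrow}$, contradicting the minimality encoded in $\psi^i = \inf \Pi^i$; hence $Y_i^{\uparrow} = Y^{\uparrow*}$ and $Y_i^{\downarrow} = Y^{\downarrow*}$.

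The hardest step is uniqueness. Because the Lipschitz constant of the one-sided reflections is exactly one, the composition $\up\psi \circ \down\psi$ is merely nonexpansive under the uniform norm and no direct Banach-contraction argument applies; the argument must instead exploit the minimality built into the $\inf$ definition of $\psi^i$ together with the complementarity from Proposition~\ref{prop:characterization of the mapping}. Additional care is needed for the Dini-type uniform convergence used to pass to the limit in the existence step, since elements of $\mathbb{D}$ are \rcll{} rather than continuous, so the pointwise monotone limits must be controlled at jump discontinuities before the Lipschitz estimate can transfer convergence through $\up\psi$ and $\down\psi$.
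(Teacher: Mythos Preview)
Your plan shares the paper's overall architecture---a monotone iteration of the one-sided maps for existence, and a minimality/``shave off'' argument for uniqueness---but both halves rely, at the crucial moment, on the separation assumption $\down w_{\0}(\lt) > \up w_{\0}(\lt)$ (equivalently $\up{\mathbb D}\cap\down{\mathbb D}=\emptyset$, Lemma~\ref{assumption:apart-1}), which you never invoke.

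For existence, the paper does \emph{not} argue via an a~priori upper envelope plus a Dini-type limit. Instead it shows that on any interval $[0,t]$ the iterates \emph{stabilize after finitely many steps}: between consecutive ``switching'' times of the iteration the sample path $\omega$ must oscillate by at least $\down w_{\0}(\lt)-\up w_{\0}(\lt)>0$, and continuity of $\omega$ bounds the number of such oscillations on $[0,t]$. This simultaneously gives finiteness of the limit and lets one pass to the limit in \eqref{ite:down-to-up}--\eqref{ite:up-to-down} without any uniform-convergence argument. Your route is not unreasonable, but you have not actually constructed the bounding feasible pair $(\bar Y^{\uparrow},\bar Y^{\downarrow})$---doing so (e.g.\ via a one-dimensional two-sided Skorokhod reflection on $\arp_t(\lt)$ using Proposition~\ref{prop:two-apart-bounds}) already requires $\down w_{\0}(\lt)>\up w_{\0}(\lt)$---and, as you yourself flag, the Dini step is not standard here: the pointwise monotone limit of nondecreasing \rcll{} functions need not be right-continuous, so feeding it into the Lipschitz bound of Proposition~\ref{prop:order and continuous on one-side} is not automatic.

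For uniqueness, your Lipschitz chain only yields
\[
\sup_{0\le u\le t}\bigl(Y_i^{\uparrow}-Y^{\uparrow*}\bigr)(u)
=\sup_{0\le u\le t}\bigl(Y_i^{\downarrow}-Y^{\downarrow*}\bigr)(u),
\]
not that this common value is zero. The entire content of uniqueness is the ``shave off'' step, and there the hard case is when the two excesses first become positive at the \emph{same} instant $\tau$. In that regime, trimming $Y_i^{\uparrow}$ alone may push the state out of $\up{\bar{\mathbb D}}$, because $Y_i^{\downarrow}$ is simultaneously too large and pulls the state down. The paper resolves this by comparing $\arp(\tau+\lt)+\omega(\tau)+Y_i^{\uparrow}(\tau)-Y_i^{\downarrow}(\tau)$ with the midpoint $\tfrac12\bigl(\down w_{\0}(\lt)+\up w_{\0}(\lt)\bigr)$ and trimming on whichever side has slack, an argument that only works because the gap $\down w_{\0}(\lt)-\up w_{\0}(\lt)$ is strictly positive. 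Without this case analysis your uniqueness argument is incomplete.
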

\begin{proof}
\ \underline{The existence of a two-sided mapping:}
We show the existence of a two-sided mapping as the limit of a series of one-sided mappings and the convergence of the mappings is achieved in a finite number of steps. 
For any $\arp$ and sample path $\omega$, we construct a series of upward and downward reflection mappings as $\down Y_0=\0$ and 
\begin{eqnarray}      
&&\up Y_k=\up \psi(\arp,\down Y_{k-1},\omega), \label{eq:ite_up}\\
&&\down Y_k=\down \psi(\arp,\up Y_k,\omega),\label{eq:ite_down}
\end{eqnarray}
for $k=1,2,3,\cdots$.
By part 1 of Proposition~\ref{prop:order and continuous on one-side}, one can easily see that both $\up Y_k$ and $\down Y_k$ increase in $k$ (in the sense of ``$\preceq$'') and hence converge as $k\to\infty$. 
We now show that, for any fixed $t$, both $\up Y_k(t)$ and $\down Y_k(t)$ converge in a finite number of steps.

Let $\arp^{k \uparrow }_s \in \up{\bar{\mathbb D}}$ denote the resulting state at time $s$ under policy $(\up Y_{k},\down Y_{k-1})$ and $\arp^{k \downarrow}_s \in \down{\bar{\mathbb D}}$ denote the state at time $s$ under policy $(\up Y_k,\down Y_k)$, for $k=1,2,\cdots$.
Let $\up t_k=\inf\{t:\arp_t^{k \uparrow} \in \down{\mathbb D}\}$ and $\down t_k=\inf\{t:\arp_t^{k \downarrow} \in \up{\mathbb D}\}$ be the first time $\arp^{k \uparrow}_t$ enters $\down{\mathbb D}$ and $\arp^{k \downarrow}_t$ enters $\up{\mathbb D}$, respectively.

We first prove that for any given $k \geq 1$, $\down Y_m=\down Y_{k-1}$ on $[0,\up t_k]$ for all $m\ge k$. The proof of $\up Y_m=\up Y_k$ on $[0,\down t_k]$ for all $m \geq k$ is similar and hence omitted. 
Thus, $\up Y_m$ and $\down Y_m$ converge to $\up Y_k $ and $\down Y_k$ in $k$ steps on $[0,\up t_{k+1}]$ and $[0,\down t_k]$, respectively. 
Since $\down Y_{k-1}(s) \leq \down Y_{k}(s) =\down \psi(\arp,\up Y_{k},\omega)(s)$ for all $s \ge 0$, $\down Y_{k-1}$ is a smaller downward adjustment than $\down Y_k$ and can also prevent the profile from entering $\down{\mathbb D}$ as $\arp^{k \uparrow}_s \in \down{\bar{\mathbb D}}$ for $s\in [0,\up t_k]$. 
Note that the one side mappings \eqref{eq:ite_up} and \eqref{eq:ite_down} on $[0,s]$ only depend on the sample path $\omega$ on $[0,s]$. 
Thus, $\down Y_{k-1}=\down Y_k$ on $[0,\up t_k]$ implying that $(\up Y_k,\down Y_{k-1})$ jointly satisfy \eqref{ite:down-to-up} and \eqref{ite:up-to-down} on $[0,\up t_k]$. 
Hence, $\up Y_m=\up Y_k$ and $\down Y_m=\down Y_{k-1}$ on $[0,\up t_k]$ for $m \geq k$.

Next, we show that $\up t_k \le \down t_k\le \up t_{k+1}\le \down t_{k+1}$  for any given $k\ge 1$.
Since $\down Y_k=\down Y_{k-1}$ on $[0,\up t_k], $ $\arp^{k \downarrow}_s=\arp^{k \uparrow}_s$ for  $s\in[0,\up t_k]$ and $\up t_k \le \down t_k$.
Likewise, since $\up Y_{k+1}=\up Y_k$ on $[0,\down t_k], $ $\down t_k\le \up t_{k+1}$. 

It remains for us to show that, for any fixed $t$, there exists $k'$ such that $\down t_{k'} \geq t$. 
Denote $s_{k}=\inf\big\{\up t_k \leq t \leq \down t_k:\down Y_{k}(t)=\down Y_{k}(\down t_k)\big\}$.
Then, no adjustment is made on $[s_{k},\down t_k]$ and $\arp^{k \downarrow}_t$ enters $\up{\mathbb D}$ after $\down t_k$ due to the Brownian motion $\omega$.
If $\down Y_k(s_k)>\down Y_k(s_k-)$, by Proposition~\ref{prop:characterization of the mapping}, $\frac{\partial C(\arp_{s_k}^{k\downarrow},0,0)}{\partial \down \xi}=0$.
On the other hand, if $\down Y_k(s_k)=\down Y_k(s_k-)$, we can find an increasing sequence $\{u_p,p=1,2,\cdots\}$ such that $\lim\limits_{p \to \infty}u_p=s_k$ and $\down Y_{k}$ increases at $u_p$.
By Proposition~\ref{prop:characterization of the mapping}, we have $\frac{\partial C(\arp_{u_p}^{k\downarrow},0,0)}{\partial \down \xi}=0$ for $p=1,2,\ldots$, which implies that $\frac{\partial C(\arp_{s_k}^{k\downarrow},0,0)}{\partial \down \xi}=0$ following the continuity property in Lemma~\ref{lemma:continuous}.
Then, by Proposition~\ref{prop:two-apart-bounds}, we must have
$\arp(s_{k})+\omega(s_{k})-\down Y_{k}(s_{k})+\up Y_{k}(s_{k}) \geq \uub$ and $\arp(\down t_k)+\omega(\down t_k)-\down Y_{k}(\down t_k)+\up Y_{k}(\down t_k) \leq \ulb$.

Since $\down Y_{k}(s_{k})=\down Y_{k}(\down t_k)$, $\arp(s_k)\le \arp(\down t_k)$ and $\up Y_{k}(s_k) \le \up Y_k(\down t_k)$, we have 
\begin{equation}
\label{eq:tech-fluc-W}
\omega({s_{k}})-\omega(\down t_k) \geq \uub-\ulb.
\end{equation} 
By the continuity of the sample path $\omega$, there exists a $\delta>0$ such that
\begin{eqnarray*}
\sup_{\stackrel{|u_1-u_2|<\delta}{0\le u_1<u_2 \le t}}
|\omega(u_1)-\omega(u_2)| <\frac{\down w_{\0}(\lt)-\up w_{\0}(\lt)}{2}.
\end{eqnarray*}
This implies that $\down t_k\geq s_{k}+\delta \geq \up t_k+\delta\ge \down t_{k-1}+\delta$ if $\down t_k < t$. 
So $\down t_{\cl{\frac{t}{\delta}}}\ge t$.

Let $(Y^{\uparrow*},Y^{\downarrow*})$ be the point-wise limit of the sequence $\{(\up Y_k,\down Y_k): k=1,2,\cdots\}$.
Since convergence can be achieved in a finite number of steps for any given $t$,  $(Y^{\uparrow*},Y^{\downarrow*})$ are finite at all $t\ge 0$. 
Taking the limit on both sides of \eqref{eq:ite_up} and \eqref{eq:ite_down}, by the Lipschitz continuity of $\up \psi(\arp,\down Y,\omega)$ and $\down \psi(\arp,\up Y,\omega)$, we can show that $(Y^{\uparrow*},Y^{\downarrow*})$ jointly satisfy \eqref{ite:down-to-up} and \eqref{ite:up-to-down}.

\noindent\underline{The uniqueness of the two-sided mapping:}
Finally, we prove the uniqueness of the two side mapping. 
Suppose that there exists a two-sided mapping $(Y^{\uparrow'},Y^{\downarrow'})$ that satisfies \eqref{ite:down-to-up} and \eqref{ite:up-to-down}.
By part~1 of Proposition~\ref{prop:order and continuous on one-side},  
$Y^{\downarrow'}\succeq \0$ implies $Y^{\uparrow'} \succeq \up Y_1$ and $Y^{\downarrow'} \succeq \down Y_1$, and subsequently, $Y^{\uparrow'} \succeq \up Y_i$ and $Y^{\downarrow'} \succeq \down Y_i$ for $i=2,3,\ldots$.
Thus, $Y^{\uparrow'} \geq Y^{\uparrow*}$ and $Y^{\downarrow'} \geq Y^{\downarrow*}$.
Define $\up\tau=\inf\{t\ge 0:Y^{\uparrow'}(t)>Y^{\uparrow*}(t)\}$ and $\down \tau=\inf\{t\ge 0:Y^{\downarrow'}(t)>Y^{\downarrow*}(t)\}$.
\begin{enumerate}
\item 
If $\up \tau > \down \tau$, then $Y^{\downarrow'}(u)=Y^{\downarrow*}(u)$ for $\down \tau \leq u<\up\tau$. 
Let
\begin{equation*}
  Y^{\uparrow''}(u)=\left\{ 
 \begin{array} {ll}
  \up Y(u), & u \in [0,\up\tau),\\
  Y^{\uparrow'}(u), & \textrm{otherwise}.
  \end{array}
  \right.
\end{equation*} 
Since $Y^{\uparrow''}(u)=\up Y(u)$ for $u<\up\tau$, $Y^{\uparrow''}$  is increasing and strictly less than $Y^{\uparrow'}$. 
By part 2 of Corollary~\ref{lemma:apply-der-to-H-L}, $Y^{\uparrow''} \in \up \Pi(\arp,Y^{\downarrow'},\omega)$, a contradiction.
\item 
If $\down \tau > \up \tau$, the proof is similar and omitted. 
\item 
If $\up \tau = \down \tau$, there exists some $\delta'>0$ such that both $Y^{\uparrow'}-\up Y$ and $Y^{\downarrow'}-\down Y$ are strictly positive in $(\up\tau,\up\tau+\delta)$. 
Denote $A_0=\arp(\up\tau+\lt)+\omega(\up\tau)+Y^{\uparrow'}(\up\tau)-Y^{\downarrow'}(\up\tau)$.
\begin{itemize}
\item  $A_0 \ge \frac{\down w_{\0}(\lt)+\up w_{\0}(\lt)}{2}$: Since $\arp(t)$, $\omega(t)$ and $Y^{\downarrow'}$ are right-continuous, there exists a $\delta\leq\delta'$ such that $|\arp(t+\lt)-\arp(\up\tau+\lt)|<\frac{\down w_{\0}(\lt)-\up w_{\0}(\lt)}{8}$, $|Y^{\downarrow'}(t)-Y^{\downarrow'}(\up\tau)|<\frac{\down w_{\0}(\lt)-\up w_{\0}(\lt)}{8}$ and  $|\omega(t)-\omega(\up\tau)|<\frac{\down w_{\0}(\lt)-\up w_{\0}(\lt)}{8}$ when $t\in [\up\tau,\up\tau+\delta)$.

Choose $\epsilon<\frac{\down w_{\0}(\lt)-\up w_{\0}(\lt)}{8}$ and let
\begin{equation*}
 Y^{\uparrow''}(u)=\left\{ 
 \begin{array} {ll} 
  \left\{ 
   \begin{array} {ll}
   Y^{\uparrow'}(\up\tau)-\epsilon  
    & \textrm { if } Y^{\uparrow'}(\up\tau) > \up Y(\up\tau),\\
   Y^{\uparrow'}(\up\tau) 
    & \textrm { if } Y^{\uparrow'}(\up\tau)= \up Y(\up\tau),
   \end{array}
  \right.     & u \in   [\up\tau,\up\tau+\delta),\\
  Y^{\uparrow'} &  \textrm {otherwise. }
  \end{array}
  \right. 
\end{equation*}
Then, under adjustments $(Y^{\uparrow''}, Y^{\downarrow'})$,
   \begin{eqnarray*}
  \arp_{t}(\lt)&=&\arp(t+\lt)+\omega(t)+Y^{\uparrow''}(t)-Y^{\downarrow'}(t)\\
              &\ge &\arp(\up\tau+\lt)+\omega(\up\tau)+Y^{\uparrow'}(\up\tau)-Y^{\downarrow'}(\up\tau)-\frac{\down w_{\0}(\lt)-\up w_{\0}(\lt)}{2} \\
              &\ge & A_0-\frac{\down w_{\0}(\lt)-\up w_{\0}(\lt)}{2} > \up w_{\0}(\lt)
\end{eqnarray*}
for $t \in [\up\tau,\up\tau+\delta)$. 
By Proposition~\ref{prop:two-apart-bounds}, we know $\arp_{t} \in \up {\bar{\mathbb D}}$ for $t \in [\up\tau,\up\tau+\delta)$. 
For $t \notin [\up\tau,\up\tau+\delta)$, $\arp_{t}\in \up {\bar{\mathbb D}}$ following the same argument as that in the proof of Proposition~\ref{prop:existence of one-side reflection mapping}. 
So $Y^{\uparrow''} \in \up \Pi(\arp,Y^{\downarrow'},\omega)$, a contradiction. 

\item $A_0<\frac{\down w_{\0}(\lt)+\up w_{\0}(\lt)}{2}$: Similarly, by finding the corresponding $\delta,\epsilon$ and letting
\begin{equation*}
 Y^{\downarrow''}(u)=\left\{ 
 \begin{array} {ll} 
  \left\{ 
   \begin{array} {ll}
   Y^{\downarrow'}(\up\tau)-\epsilon  
    & \textrm { if } Y^{\downarrow'}(\up\tau) > \down Y(\up\tau),\\
   Y^{\downarrow'}(\up\tau) 
    & \textrm { if } Y^{\downarrow'}(\up\tau)= \down Y(\up\tau),
   \end{array}
  \right.     & u \in   [\up\tau,\up\tau+\delta),\\
  Y^{\downarrow'}(u) &  \textrm {otherwise, }
  \end{array}
  \right.
\end{equation*}
we can show $Y^{\downarrow''} \in \down \Pi(\arp,Y^{\uparrow'},\omega)$, a contradiction. 
\end{itemize}
\end{enumerate}
\end{proof}

\subsection{The Optimality of the Two-sided Reflection Policy}
\label{sec:Optimality of the Two-Side Reflection Policy}
In this section, we show that the two-sided reflection mapping $\pi^*=(Y^{\uparrow *},Y^{\downarrow *})$ is  optimal and makes  the minimum amount
of adjustment to prevent the state $\arp_t$, $t \geq 0$, from falling into $ \up{\mathbb
D}$ and $\down{\mathbb D}$. 
Under the one-dimensional setting in \cite{HarrisonTaksar1983} and described
in Section $5$, $\up{\mathbb D}=\{y<b\}$ and $\down{\mathbb D}=\{y>a\}$,
 our two-sided reflection mapping reduces to the same closed-forms
\begin{eqnarray*}
 && R(t)=\sup\limits_{0 \le u \le t}{[a-\omega(u)+L(u)]},\ t \ge 0,\\
 && L(t)=\sup\limits_{0 \le u \le t}{[\omega(u)+R(u)-b]},\ t \ge 0
\end{eqnarray*} in their paper. 
This reflection mapping makes the minimum amount of adjustment to keep the controlled process in the region $\{a \le y \le b\}$.
\begin{theorem}
\label{th:optimal policy}
The policy $\pi^*=(Y^{\uparrow*},Y^{\downarrow*})$ is optimal, i.e.,
$ C(\arp,\pi^*)=C^*(\arp) \mbox{ for all } \arp \in \mathbb{D}$.
\end{theorem}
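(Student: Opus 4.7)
The plan is to prove the theorem via a verification argument based on the HJB characterization of $C^*$ and the complementarity of the reflection policy. Since $C^*(\arp)\le C(\arp,\pi^*)$ is immediate from $C^*(\arp)=\inf_{\pi}C(\arp,\pi)$, the only nontrivial direction is $C(\arp,\pi^*)\le C^*(\arp)$. To achieve this, I will show that under $\pi^*$ the process
\begin{equation*}
\Psi(t)\;=\;e^{-\gamma t}C^*(\arp_t^*)\;+\;\int_0^t e^{-\gamma s}h(\arp_s^*(0))\,ds\;+\;\int_0^t e^{-\gamma s}\bigl(\up k\,dY^{\uparrow *}(s)+\down k\,dY^{\downarrow *}(s)\bigr)
\end{equation*}
is a (local) martingale with $\Psi(0)=C^*(\arp)$; after a localization/truncation argument and sending $t\to\infty$, taking expectations then gives $C^*(\arp)=C(\arp,\pi^*)$, provided the transversality condition $E[e^{-\gamma t}C^*(\arp_t^*)]\to 0$ holds.

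The argument proceeds in three steps. \emph{Step 1: diffusive part.} By construction of the reflection mapping, $\arp_t^*\in\up{\bar{\mathbb D}}\cap\down{\bar{\mathbb D}}$ for all $t$, hence $(0,0)\in\Xi_{\arp_t^*}$ almost surely. Applying Corollary~\ref{cor:general-ine-pde} (or equivalently Theorem~\ref{thm:PDE} in Ito form) to the shifted value $V_{\arp_t^*}$ on each no-reflection interval yields
\begin{equation*}
d\bigl(e^{-\gamma t}C^*(\arp_t^*)\bigr)+e^{-\gamma t}h(\arp_t^*(0))\,dt\;=\;e^{-\gamma t}\tfrac{\partial V_{\arp_t^*}}{\partial w}(0,0)\,dW_t,
\end{equation*}
so $\Psi(t)$ has zero drift away from reflection instants. \emph{Step 2: reflection increments.} Unpacking the definition of $\tfrac{\partial C(\arp,0,0)}{\partial\up\xi}=\up k+\lim_{\epsilon\downarrow 0}\tfrac{1}{\epsilon}\bigl[C^*(\arp+\epsilon\mathbf 1_{\{u\ge\ell\}})-C^*(\arp)\bigr]$ together with the complementarity of Proposition~\ref{prop:characterization of the mapping} gives that, on the support of $dY^{\uparrow *}$, the directional derivative of $C^*$ in the upward-adjustment direction equals $-\up k$. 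Integrated against $dY^{\uparrow *}$ this yields $dC^*(\arp_t^*)=-\up k\,dY^{\uparrow *}(t)$ along upward reflections, and symmetrically $dC^*(\arp_t^*)=-\down k\,dY^{\downarrow *}(t)$ along downward ones; these changes exactly annihilate the corresponding cumulative-cost increments in $\Psi$, so the local martingale property persists across reflections. \emph{Step 3: closing the argument.} The Lipschitz bound of Proposition~\ref{prop:Lips-con of C^*} together with the feasibility condition \eqref{equ:feasiblecondition} and standard growth estimates on $\arp_t^*$ control $E[e^{-\gamma t}C^*(\arp_t^*)]\to 0$, after which dominated convergence delivers $C^*(\arp)=E[\Psi(\infty)]=C(\arp,\pi^*)$.

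The hard part will be Step 2: turning the measure-theoretic complementarity into the identity $dC^*(\arp_t^*)=-\up k\,dY^{\uparrow *}(t)$ when $Y^{\uparrow *}$ is continuous and possibly singular (the analogue of local time for a reflected Brownian motion). Two obstacles compound this. First, the directional derivative of $C^*$ on $\mathbb D$ is only one-sided, so one cannot invoke a functional Ito formula on $\mathbb D$ directly; I plan to circumvent this by working throughout with the finite-dimensional representation $V_{\arp_t^*}(w,s)$ at the running reference state $\arp_t^*$, so that classical Ito applies in the $(w,s)$ variables while reflection contributions are inserted via Proposition~\ref{prop:characterization of the mapping}. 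Second, one must verify the directional-derivative identity uniformly along the path rather than pointwise; here I plan to leverage the $L^\natural$-convexity of $C^*$ (Theorem~\ref{thm:L-natural}) and the continuity and monotonicity results of Lemmas~\ref{prop:mono-boundary-con} and~\ref{lemma:continuous} to pass from a pointwise statement to the integrated form along $dY^{\uparrow *}$ and $dY^{\downarrow *}$.
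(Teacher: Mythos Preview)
Your verification approach is the natural strategy, and Steps~1--3 are morally what the paper is after as well. However, Step~1 contains a genuine gap. You assert that $\arp_t^*\in\up{\bar{\mathbb D}}\cap\down{\bar{\mathbb D}}$ implies $(0,0)\in\Xi_{\arp_t^*}$, but $\Xi_{\arp}$ is defined by \emph{strict} inequalities (see \eqref{equ:no-control-region}): it is the open continuation region. Under the reflection policy, at every instant on the support of $dY^{\uparrow*}$ or $dY^{\downarrow*}$ the state sits exactly on $\partial\Xi$, so $(0,0)\notin\Xi_{\arp_t^*}$ there. Since Theorem~\ref{thm:PDE} establishes the PDE only on $\Xi_\arp$, and the paper provides no $C^{1,2}$ regularity of $V_\arp$ up to the boundary, you cannot apply It\^o's formula across reflection times; the martingale decomposition of $\Psi(t)$ breaks down precisely where the reflection contributions enter. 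Your Step~2 difficulty (turning complementarity into the integrated identity $dC^*(\arp_t^*)=-\up k\,dY^{\uparrow*}(t)$) is a symptom of the same missing regularity, not a separate issue that $L^\natural$-convexity alone can repair.

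The paper's proof is engineered to avoid needing the PDE at the boundary. It introduces a $\delta$-perturbed process $\arp_t^\delta=\arp_t^*\pm\delta$, with the sign flipping at stopping times $\tau_k^\delta$ chosen so that $\arp_t^\delta$ stays \emph{strictly} inside $\Xi$ for all $t$; Corollary~\ref{cor:general-ine-pde} can then be applied legitimately on each interval $[\tau_{k-1}^\delta,\tau_k^\delta]$. The reflection increments are handled not by a directional-derivative identity but by a further $\epsilon$-discretization of $(Y^{\uparrow*},Y^{\downarrow*})$ into finitely many small jumps, so that the change in $C^*$ across each jump is controlled by $\partial C/\partial\up\xi$ and $\partial C/\partial\down\xi$ evaluated at interior points. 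One sends $\epsilon\to 0$ first, then $\delta\to 0$, using Proposition~\ref{prop:characterization of the mapping} to kill the residual term, and finally $T\to\infty$. Your instinct to ``work with $V_{\arp_t^*}(w,s)$ and insert reflection contributions via Proposition~\ref{prop:characterization of the mapping}'' points in the right direction, but without the $\delta$-shift there is no interior point at which to evaluate the PDE during reflection, and the argument does not close.
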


We prove Theorem \ref{th:optimal policy} by considering a cost characterized by $\delta>0$ in \eqref{eq:C-mod} and showing that this cost approaches both $C(\arp,\pi^*)$ (Lemma \ref{lemma:close-1}) and $C^*(\arp)$ (Lemma \ref{lemma:close-2}) as $\delta \to 0$.  
Let $\arp_t$ be the state at $t$ under the two-sided reflection policy $\pi^*=(Y^{\uparrow*},Y^{\downarrow*})$ with initial profile $\arp$. 
For any small $\delta>0$, let $\down{\mathbb D}-\delta$ = $\{\arp'-\delta,:\forall\arp' \in \down{\mathbb D}\}$ and $\up{\mathbb D}+\delta:=\{\arp'+\delta:\forall\arp' \in \up{\mathbb D}\}$.
For a given sample path of the Brownian motion and associated control $\pi^*$, the state $\arp_t$ will enter $\up{\mathbb D}+\delta$ when $\frac{\partial C(\arp_t-\delta,0,0)}{\partial\up \xi}=0$ and $\down{\mathbb D}-\delta$ when $\frac{\partial C(\arp_t+\delta,0,0)}{\partial\down \xi}=0$ many times over time.    
Without loss of generality, we assume that $\arp_t$ first enters $\up{\mathbb D}+\delta$ and at 
\begin{equation*}
  \tau^{\delta}_1 = \inf\left\{t \geq 0: \frac{\partial C(\arp_t-\delta,0,0)}{\partial\up
\xi}=0 \right\}.
\end{equation*}
The process evolves and eventually enters $\down {\mathbb D}-\delta$ at 
\begin{equation*}
  \tau^{\delta}_2 = \inf\left\{t > \tau^{\delta}_1: \frac{\partial C(\arp_t+\delta,0,0)}{\partial\down
\xi}=0 \right\}.
\end{equation*}
For $j=1,2,\cdots$, define 
\begin{align*}
  \tau^{\delta}_{2j+1} 
  &= \inf\left\{t >\tau^{\delta}_{2j}: 
    \frac{\partial C(\arp_t-\delta,0,0)}{\partial\up \xi}=0 \right\},\\
  \tau^{\delta}_{2j+2} 
  &= \inf\left\{t  >\tau^{\delta}_{2j+1}:
    \frac{\partial C(\arp_t+\delta,0,0)}{\partial\down \xi}=0 \right\}.
\end{align*}
 $\tau^{\delta}_{2j+1}$ represents the first time $\arp_t$ enters $\up{\mathbb D}+\delta$ since $\tau^{\delta}_{2j}$, and $\tau^{\delta}_{2j+2}$ represents the first time $\arp_t$ enters $\down{\mathbb D}-\delta$ since $\tau^{\delta}_{2j+1}$.
Thus, $\tau^{\delta}_i$, $i=1,2, \cdots,$ form a series of stopping times. 
Let $N(t)=\max\{k:\tau^{\delta}_k \le t\}$ be the total number of such stopping times by $t$,
\begin{eqnarray*}
  \arp^{\delta}_t = \left\{
  \begin{array}{ll}
    \arp_t-\delta, & \textrm{ if }\ t< \tau^{\delta}_1,\\
    \arp_t+\delta, & \textrm{ if }\ \tau^{\delta}_{2j-1}\leq t <\tau^{\delta}_{2j},\\
    \arp_t-\delta, & \textrm{ if }\ \tau^{\delta}_{2j}\leq t <\tau^{\delta}_{2j+1},
  \end{array}
  \right.
\end{eqnarray*}
and 
\begin{equation}
  \label{eq:C-mod}
  C^{\delta}(\arp,\pi^*) 
  = \E\left[
    \int_0^{\infty} e^{-\gamma t}h(\arp^{\delta}_t(0))dt
    + \up k \int_0^{\infty} e^{-\gamma t} d Y^{\uparrow*}(t)
    + \down k\int_0^{\infty}e^{-\gamma t} d Y^{\downarrow*}(t)
  \right]
\end{equation}
be the cost associated with the process $\{\arp^\delta_t\}$ and policy $\pi^*$. 
$C^{\delta}(\arp,\pi^*)$ differs from $C(\arp,\pi^*)$ only by the holding cost term and the difference is bounded by $\int_0^{\infty} e^{-\gamma t} t dt=\frac{M}{\gamma}\delta$ as stated in the following lemma.

\begin{lemma}
  \label{lemma:close-1}
$|C(\arp,\pi^*)-C^{\delta}(\arp,\pi^*)| \leq \frac{M}{\gamma}\delta.$
\end{lemma}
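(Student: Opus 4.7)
The plan is essentially immediate from the construction, since $C$ and $C^\delta$ differ only in the holding cost terms. First I would observe that the two cost expressions \eqref{eq:trans_cost_function} and \eqref{eq:C-mod} share identical control cost components $\up k \int_0^\infty e^{-\gamma t} d Y^{\uparrow*}(t) + \down k \int_0^\infty e^{-\gamma t} d Y^{\downarrow*}(t)$, so subtracting them cancels these entirely and leaves only
\[
C(\arp,\pi^*)-C^{\delta}(\arp,\pi^*)
 = \E\left[\int_0^{\infty} e^{-\gamma t}\bigl[h(\arp_t(0))-h(\arp^{\delta}_t(0))\bigr]\,dt\right].
\]

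Next I would use the definition of $\arp_t^\delta$ given just before \eqref{eq:C-mod}: on each of the stopping-time intervals, $\arp_t^\delta$ equals $\arp_t$ shifted uniformly by either $+\delta$ or $-\delta$. In particular, $|\arp_t^\delta(0)-\arp_t(0)|=\delta$ for every $t\geq 0$ (almost surely), so by part~(3) of Assumption~\ref{assum:orig-h}, which gives $|h'|\leq M$,
\[
\bigl|h(\arp_t(0))-h(\arp^{\delta}_t(0))\bigr|\leq M\delta
\]
pointwise in $t$ and $\omega$.

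Finally I would pull the absolute value inside the expectation and integral, apply the pointwise bound, and compute the resulting deterministic integral:
\[
|C(\arp,\pi^*)-C^{\delta}(\arp,\pi^*)|
 \leq M\delta\int_0^{\infty} e^{-\gamma t}\,dt
 = \frac{M}{\gamma}\delta.
\]
There is no real obstacle here: the only thing one must verify carefully is that the union of the random intervals $[\tau^{\delta}_{2j-1},\tau^{\delta}_{2j})$ and $[\tau^{\delta}_{2j},\tau^{\delta}_{2j+1})$ together with $[0,\tau^{\delta}_1)$ exhausts $[0,\infty)$ on the event $N(t)\to\infty$, and to note that even if $N(\infty)<\infty$ on some paths, the shift $\pm\delta$ is still in force on the unbounded terminal interval, so the pointwise bound $|\arp_t^\delta(0)-\arp_t(0)|=\delta$ holds globally. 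This step is the one place to be slightly careful, but it is a bookkeeping check, not a real difficulty.
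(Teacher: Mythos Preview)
Your proposal is correct and follows exactly the same approach as the paper: the paper states (just before Lemma~\ref{lemma:close-1}) that $C^{\delta}(\arp,\pi^*)$ differs from $C(\arp,\pi^*)$ only in the holding cost term and that this difference is bounded by $\frac{M}{\gamma}\delta$, without writing out any further details. Your argument simply makes this explicit, using the cancellation of control costs, the pointwise identity $|\arp_t^{\delta}(0)-\arp_t(0)|=\delta$, and the Lipschitz bound $|h'|\le M$ from Assumption~\ref{assum:orig-h}(3).
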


Applying Proposition~\ref{prop:characterization of the mapping} and Theorem~\ref{thm:PDE}, we can show the following lemma.
\begin{lemma}
  \label{lemma:close-2}
For any fixed $T \ge 0$,
\begin{equation}
 C^{\delta}(\arp,\pi^*) \leq C^*(\arp)+(2\E N(T)+3)M\delta-R_1(\arp,\delta,T)+R_2(T),
\label{eq:delta-change-cost}
  \end{equation}
where $R_1(\arp,\delta,T) \to 0$ as $\delta \to 0$ for any fixed $T$ and $R_2(T) \to 0$ as $T \to \infty$.
\end{lemma}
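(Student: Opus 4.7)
The plan is to apply Ito's formula to $e^{-\gamma t}C^*(\arp_t^\delta)$ piecewise across the stopping times $\tau_k^\delta$, using the HJB equation of Theorem~\ref{thm:PDE} to collapse the continuous drift into $-h(\arp_t^\delta(0))$ and the complementarity of Proposition~\ref{prop:characterization of the mapping} to cancel the control-cost pieces of $C^\delta$ exactly. What remains is a telescoping sum of boundary values of $C^*$, whose errors are $O(\delta)$ per mode switch (via Lipschitz continuity, Proposition~\ref{prop:Lips-con of C^*}) plus a terminal residue and a tail past $T$.

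I first decompose $[0,\infty)=[0,T]\cup(T,\infty)$ and absorb the post-$T$ holding and control costs into $R_2(T)$, which vanishes as $T\to\infty$ by exponential discounting, feasibility of $\pi^*$, and the linear growth of $C^*$ implied by Proposition~\ref{prop:Lips-con of C^*}. On $[0,T]$ I subdivide into the pieces $[\tau_k^\delta,\tau_{k+1}^\delta)$. By the very definition of the mode-switch times, on each such piece $\arp_t^\delta$ sits at distance $\delta$ from whichever one-sided boundary of the no-adjustment region $\pi^*$ is reflecting against, so $\arp_t^\delta$ remains strictly inside $\Xi_{\arp_{\tau_k^\delta}^\delta}$ and Corollary~\ref{cor:general-ine-pde} together with the PDE in Theorem~\ref{thm:PDE} applies directly.

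Applying Ito to $e^{-\gamma t}C^*(\arp_t^\delta)$ on each piece and invoking the PDE, the continuous component of $d[e^{-\gamma t}C^*(\arp_t^\delta)]$ equals $-e^{-\gamma t}h(\arp_t^\delta(0))\,dt$ up to a martingale. Jumps come in two kinds. For the $\pi^*$-adjustments, Proposition~\ref{prop:characterization of the mapping} supports $dY^{\uparrow*}$ on the set where $\frac{\partial C(\arp_t,0,0)}{\partial \up\xi}=0$; parts~1--2 of Lemma~\ref{prop:mono-boundary-con} combined with Lemma~\ref{lemma:nonnegative} then sandwich $\frac{\partial C(\arp_t^\delta,0,0)}{\partial \up\xi}$ between $0$ and $0$, so the directional derivative of $C^*$ at $\arp_t^\delta$ along an upward adjustment equals exactly $-\up k$, and the jump of $C^*(\arp_t^\delta)$ at this instant is $-\up k\,dY^{\uparrow*}$, precisely cancelling $\up k\,dY^{\uparrow*}$ in $C^\delta$; part~3 of Lemma~\ref{prop:mono-boundary-con} yields the symmetric cancellation for $Y^{\downarrow*}$. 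For the $\pm 2\delta$ mode-switch jumps at each $\tau_k^\delta$, Lipschitz continuity bounds each jump of $C^*(\arp_t^\delta)$ by $2M\delta$, giving aggregate error at most $2M\delta\,\E N(T)$ on $[0,T]$; the constant $3$ absorbs the pre-$\tau_1^\delta$ initial shift $|C^*(\arp_0^\delta)-C^*(\arp)|$ and the truncation at $T$. Taking expectations, telescoping, and identifying as $R_1(\arp,\delta,T)\ge 0$ the nonnegative terminal term together with any lower-order Ito remainders that vanish as $\delta\to 0$ for fixed $T$ delivers the stated inequality.

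The step I expect to be the main obstacle is the cancellation just described: verifying that at each reflecting instant the directional derivative of $C^*$ at the shifted profile $\arp_t^\delta$ along the adjustment direction is exactly $-\up k$ (respectively $-\down k$), not merely approximately so. Without this exact cancellation an extra $O(\delta)$ would accumulate per adjustment, and since $\pi^*$ reflects over an uncountable set of times on $[0,T]$ the error could blow up. The rest of the argument is routine: piecewise Ito, careful bookkeeping of the $C^*$-values across the $\tau_k^\delta$, and Lipschitz estimates for the mode-switch and endpoint terms.
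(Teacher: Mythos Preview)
Your overall architecture---telescoping $e^{-\gamma t}C^*(\arp_t^\delta)$ across the $\tau_k^\delta$, using the PDE on each piece, and bounding the $\pm 2\delta$ mode-switch jumps by Lipschitz continuity---matches the paper's strategy. But the step you yourself flag as the main obstacle does not go through as you describe, and this is precisely where the paper's definition of $R_1$ enters.

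Your sandwich argument fails. On the intervals where $dY^{\uparrow*}$ is supported we have $\arp_t^\delta=\arp_t+\delta$ (not $\arp_t-\delta$): indeed, on $[\tau_{2j}^\delta,\tau_{2j+1}^\delta)$ one has $\frac{\partial C(\arp_t-\delta,0,0)}{\partial\up\xi}>0$, and monotonicity in the level forces $\frac{\partial C(\arp_t,0,0)}{\partial\up\xi}>0$, so $Y^{\uparrow*}$ cannot move there. At a genuine reflecting instant, complementarity gives $\frac{\partial C(\arp_t,0,0)}{\partial\up\xi}=0$, but the monotonicity of Lemma~\ref{prop:mono-boundary-con} (in the direction established by its proof and by its use in Proposition~\ref{prop:two-apart-bounds}) yields $\frac{\partial C(\arp_t+\delta,0,0)}{\partial\up\xi}\ge 0$, which combined with Lemma~\ref{lemma:nonnegative} gives only $\ge 0$, not a two-sided squeeze. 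The same happens for $Y^{\downarrow*}$ via part~3 of Lemma~\ref{prop:mono-boundary-con}. So the directional derivative of $C^*$ at $\arp_t^\delta$ is \emph{not} exactly $-\up k$ (resp.\ $-\down k$); there is a nonnegative residual $\frac{\partial C(\arp_t^\delta,0,0)}{\partial\up\xi}\,dY^{\uparrow*}+\frac{\partial C(\arp_t^\delta,0,0)}{\partial\down\xi}\,dY^{\downarrow*}$.

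The paper's resolution is not to eliminate this residual but to \emph{name} it: $R_1(\arp,\delta,T)$ is exactly $\E\int_0^T e^{-\gamma t}\big[\frac{\partial C(\arp_t^\delta,0,0)}{\partial\up\xi}\,dY^{\uparrow*}(t)+\frac{\partial C(\arp_t^\delta,0,0)}{\partial\down\xi}\,dY^{\downarrow*}(t)\big]$. It is nonnegative, so it enters the inequality with the correct sign, and it tends to zero as $\delta\to 0$ by dominated convergence together with complementarity at the \emph{unshifted} state $\arp_t$ (Proposition~\ref{prop:characterization of the mapping}) and the continuity in Lemma~\ref{lemma:continuous}. Your identification of $R_1$ as a ``terminal term together with lower-order Ito remainders'' is therefore not what the lemma intends.

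A secondary issue: you cannot apply It\^o's formula directly to $C^*(\arp_t^\delta)$, since $\arp_t^\delta$ lives in the function space $\mathbb D$ and $C^*$ is only known to be Lipschitz there. The PDE of Theorem~\ref{thm:PDE} is stated in $(w,s)$-coordinates for a \emph{fixed} starting profile with no control. The paper handles this by a second $\epsilon$-discretization of $\pi^*$ into step controls (the partition $\iota_{k,j}$), so that on each sub-interval the state evolves without adjustment and Corollary~\ref{cor:general-ine-pde} applies verbatim; the control increments are then treated as finite jumps, summed, and passed to the limit $\epsilon\to 0$. Your sketch elides this.
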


The proof is quite technical and can be found in the Appendix. 
We are now ready to prove the optimality of the two-sided reflection policy $\pi^*$.
\begin{proof}[Proof of Theorem~\ref{th:optimal policy}]
We first show that $\E N(t)$ is finite for any $t \geq 0$.
Consider a sequence of stopping times of the Brownian motion $W_t,$ 
\begin{eqnarray*}
&&U_1=\inf\left\{t>0,  |W_t|=\frac{\uub-\ulb}{4}\right\},\\
&&U_j=\inf\left\{t>U_{j-1}, |W_t-W_{U_{j-1}}|=\frac{\uub-\ulb}{4}\right\},\quad j=1,2,\cdots.
\end{eqnarray*}
and let $N'(t)=\max\{j:U_j\le t\}$ be the corresponding counting process.  

By the definitions of two consecutive stopping times $\tau^{\delta}_{2j-1}$ and $\tau^{\delta}_{2j}$, $\arp^{\delta}_t$ enters $\down{\mathbb D}-\delta$ at $\tau^{\delta}_{2j-1}$ and then enters $\up{\mathbb D}+\delta$ at $\tau^{\delta}_{2j}$. 
By the same argument leading to \eqref{eq:tech-fluc-W} in the proof of Proposition~\ref{prop:existence of two-side reflection mapping}, for a small enough $\delta$, there exist ${\tau^{\delta}_{2j-1}\le s_1<s_2\le\tau^{\delta}_{2j}}$ such that
\begin{equation*}
  W_{s_1}-W_{s_2}
  \ge \uub-\ulb-2\delta
  > \frac{\uub-\ulb}{2}.
\end{equation*}
Thus, there must exist $i_j$ such that $U_{i_j} \in [s_1,s_2] \subset [\tau_{2j-1}^{\delta},\tau_{2j}^{\delta}]$ for each $j=1,2,\cdots$. 
Hence $N(t)\leq 2N'(t)$ for any $t>0$ and $\E N(t)$ is finite.
Fixing the $T$ and letting $\delta \to 0$ in Lemma~\ref{lemma:close-1} and Lemma~\ref{lemma:close-2}, we have
\begin{equation}
  C(\arp,\pi^*) \leq C^*(\arp) + R_2(T)
 \label{eq:final}
\end{equation} 
for any  $T \ge 0$.
Note that $R_2(T) \to 0$ as $T \to \infty$, combining the above with the optimality of $C^*(\arp)$, we have $C(\arp,\pi^*)=C^*(\arp)$.
\end{proof}

\section{Conclusions and Discussion of the General Case with a Lead Time for Downward Adjustments}
\label{sec:discussion}
In this paper, we consider the optimal control of a storage system whose content is driven  by a Brownian motion absent  control.  
Because there is a positive lead time for upward adjustments, the state of the system is a function on a continuous interval and  such a problem is extremely challenging. 
We develop a novel four-step approach described in the Introduction to identify the structure of optimal control as a state-dependent two-sided reflection mapping that makes the minimum amount of upward or downward adjustment to prevent the state from entering into certain regions.  
To the best of our knowledge, this is the first paper to study instantaneous control of stochastic systems in a functional setting and the methodology developed in the paper may inspire ways to solve other control problems in various applications.  

We have assumed that downward adjustments are instantaneous. If they are not and there is  a positive lead time for downward adjustments, then by the time a promised downward adjustment is made there may not be enough content left due to the Brownian motion. 
The only way to avoid this situation completely is to add a constraint on downward adjustments and set aside enough inventory.
But then it will be too difficult to calculate the inventory cost. 

Now suppose that backlogging of downward adjustments after the lead time is allowed at the same penalty cost as that whenever the content is negative. 
Then, if the lead times for upward and downward adjustments are identical, the problem can be reduced to one with zero upward and downward adjustment lead times by Theorem~3.11 in \cite{OksendalSulem2009}.
Otherwise, our analysis can be extended by transforming the problem into one with a single lead time as follows. 

Since upward and downward adjustments are symmetric analytically when the latter can be backlogged, we only need to consider the case where $\up\lt \geq \down\lt>0$ and show that the system can be transformed into one with zero lead time for downward adjustments. 

Define ${\arp}^{i}_t(u)$ as the total outstanding movement $i$, $i \in \{\uparrow,\downarrow \}$, at time $t$ but {\it before} any adjustment at time $t$ that will be realized during $(t,t+u]$ and $\arp^{i}_t=\{\arp^{i}_t(u), u \geq 0\}$.
Then, $(\up{\arp}_t,\down{\arp}_t)$ is the profile of the outstanding movements at time $t$ with $\arp^{i}_t(0)=0$ and $\arp^{i}_t(u)=\arp^{i}_t(\lt^{i})$ for $u>\lt^{i}$, and $(H_t, \up{\arp}_t, \down{\arp}_t)$ describes the state of the system at time $t$.
Hence, for $t>0$, the dynamics of the system can be written as\begin{align}
  \label{eq:dyn-value}
  H_t&= H_0 + W_t + \up{\arp}_{0}(t) - \down{\arp}_0(t)
      + \up{Y}(t-\up{\lt}) - \down{Y}(t-\down{\lt}),\\ \label{eq:dyn-profile}
  {\arp}^i_{t}(u) &= \left\{
   \begin{array}{ll}
  {\arp}^i_0(t+u)-{\arp}^i_0(t)+{Y}^i(t+u-\lt^i)-{Y}^i(t-\lt^i), 
   & \mbox{if   } u \leq \lt^i,\\
  {\arp}^i_{t}(\lt^i), 
   & \mbox{else,}
    \end{array} 
    \right.
\end{align}
and the cost function for any initial state $(H_{0},\up{\arp}_{0},\down{\arp}_{0})$ and policy $\pi$ is  
\begin{equation}
 \label{eq:cost_function}
  \tilde{C}(H_{0},\up{\arp}_{0},\down{\arp}_{0},\pi)
  =\E\left[
    \int_0^{\infty}e^{-\gamma t}h(H_t) dt 
     + \int_0^{\infty}e^{-\gamma t} \up k d \up Y(t)
     + \int_0^{\infty}e^{-\gamma t} \down k d \down Y(t)
  \right].
\end{equation}


Now consider another system where there is no lead time for downward adjustments and the lead time for upward adjustments is $\lt=\up\lt-\down\lt$,  the initial state is $\arp_0(u)=H_0+\up{\arp}_0(u+\down\lt)-\down{\arp}_0(\down\lt),$ and the holding cost rate is $\tilde{h}(x)=e^{-\gamma \down\lt}E[h(x+\mathcal{N}_{\down\lt})]$.
The following proposition reveals that the difference between the cost functions of the single lead time system and the original system is a constant under the same policy. 
Thus, the problem reduces to one with zero lead time for downward adjustments.
\begin{proposition}
\label{prop:costequivalence}
For any fixed policy $\pi$,   
 \begin{equation*}
    C(\arp_{0},\pi) 
    = \tilde{C}(H_{0},\up{\arp}_{0},\down{\arp}_{0},\pi)
    -\E\left[\int_0^{\down{\lt}}e^{-\gamma t}{h}(H_t) dt\right], 
\end{equation*}
where $\arp_0(u)=H_0+\up{\arp}_0(u+\down\lt)-\down{\arp}_0(\down\lt)$ and  $\E \left[\int_0^{\down{\lt}}e^{-\gamma t}h(H_t) dt \right]$ is a constant  for given  $(H_{0},\up{\arp}_{0},\down{\arp}_{0})$.
\end{proposition}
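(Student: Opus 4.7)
The plan is to peel off the first $\down\lt$ units of time from the holding-cost integral, then use a time-shift together with the strong Markov/independent-increments property of $W$ to re-express the remaining integral in terms of the zero-downward-lead-time system with modified holding cost $\tilde h$.

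First I would argue that the integral $\E\bigl[\int_0^{\down\lt} e^{-\gamma t}h(H_t)\,dt\bigr]$ depends only on $(H_0,\up\arp_0,\down\arp_0)$ (not on $\pi$). Indeed, from \eqref{eq:dyn-value}, for $t<\down\lt\le\up\lt$ the terms $\up Y(t-\up\lt)$ and $\down Y(t-\down\lt)$ both vanish, so $H_t=H_0+W_t+\up\arp_0(t)-\down\arp_0(t)$ is determined by the initial data and the driving Brownian motion alone. This gives the ``constant'' claim.

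Next, I would split the original holding cost as
\[
\E\!\left[\int_0^{\infty}\!e^{-\gamma t}h(H_t)\,dt\right]
=\E\!\left[\int_0^{\down\lt}\!e^{-\gamma t}h(H_t)\,dt\right]
+\E\!\left[\int_{\down\lt}^{\infty}\!e^{-\gamma t}h(H_t)\,dt\right],
\]
and change variables $t=s+\down\lt$ in the second integral. From \eqref{eq:dyn-value}, using $\up\lt-\down\lt=\lt$ and the fact that $\down\arp_0(s+\down\lt)=\down\arp_0(\down\lt)$ for $s\ge0$, I would expand
\[
H_{s+\down\lt}=\mathcal{Z}_s+(W_{s+\down\lt}-W_s),
\]
where
\[
\mathcal{Z}_s:=H_0-\down\arp_0(\down\lt)+\up\arp_0(s+\down\lt)+W_s+\up Y(s-\lt)-\down Y(s)
\]
is $\mathscr F_s$-measurable. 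The key identification is that, with the prescribed initial profile $\arp_0(u)=H_0+\up\arp_0(u+\down\lt)-\down\arp_0(\down\lt)$, the dynamics \eqref{equ:dynamics} for the single-lead-time system yield exactly $\arp_s(0)=\mathcal{Z}_s$. Then, since $W_{s+\down\lt}-W_s$ is independent of $\mathscr F_s$ and distributed as $\mathcal{N}_{\down\lt}$, the tower property gives
\[
\E[h(H_{s+\down\lt})\mid\mathscr F_s]=\E[h(x+\mathcal{N}_{\down\lt})]\big|_{x=\arp_s(0)}=e^{\gamma\down\lt}\tilde h(\arp_s(0)),
\]
so that
\[
\E\!\left[\int_{\down\lt}^{\infty}\!e^{-\gamma t}h(H_t)\,dt\right]
=e^{-\gamma\down\lt}\!\int_0^{\infty}\!e^{-\gamma s}\E[h(\arp_s(0)+\mathcal{N}_{\down\lt})]\,ds
=\E\!\left[\int_0^{\infty}\!e^{-\gamma s}\tilde h(\arp_s(0))\,ds\right].
\]

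Finally, the control-cost portions of $\tilde C$ and $C$ are identical, because both use the same policy $\pi=(\up Y,\down Y)$ and the discounted control-cost integrals $\int_0^\infty e^{-\gamma t}(\up k\,d\up Y+\down k\,d\down Y)$ are insensitive to lead times. Rearranging then gives the stated identity. The only part requiring some care is the conditioning step: justifying that one may pull $\mathcal{Z}_s$ out by conditioning on $\mathscr F_s$ (which is clean since $W_{s+\down\lt}-W_s$ is independent of $\mathscr F_s$) and checking that Fubini applies, which follows from the boundedness of $h'$ in Assumption~\ref{assum:orig-h} together with the feasibility condition \eqref{equ:feasiblecondition}. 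The rest is bookkeeping on the outstanding-profile identities.
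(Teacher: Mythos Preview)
Your proposal is correct and follows essentially the same route as the paper's proof: split the holding-cost integral at $\down\lt$, shift time by $\down\lt$, expand $H_{s+\down\lt}$ via the dynamics, identify the $\mathscr F_s$-measurable part with $\arp_s(0)$ in the single-lead-time system, and then condition out the independent Brownian increment $W_{s+\down\lt}-W_s\sim\mathcal N_{\down\lt}$ to recover $\tilde h$. Your treatment is slightly more explicit about the conditioning $\sigma$-field and the Fubini justification, but the substance is identical.
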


\bibliography{pub}

\begin{thebibliography}{}

\bibitem[\protect\citeauthoryear{Bather}{Bather}{1966}]{Bather1966}
Bather, J.~A. (1966).
\newblock A continuous time inventory model.
\newblock {\em Journal of Applied Probability\/}~{\em 3\/}(2), 538--549.

\bibitem[\protect\citeauthoryear{Chen and Yao}{Chen and
  Yao}{2001}]{ChenYao2001}
Chen, H. and D.~D. Yao (2001).
\newblock {\em Fundamentals of queueing networks}, Volume~46 of {\em
  Applications of Mathematics (New York)}.
\newblock New York: Springer-Verlag.

\bibitem[\protect\citeauthoryear{Chen, Pang, and Pan}{Chen
  et~al.}{2014}]{CPP2014}
Chen, X., Z.~Pang, and L.~Pan (2014).
\newblock Coordinating inventory control and pricing strategies for perishable
  products.
\newblock {\em Oper. Res.\/}~{\em 62\/}(2), 284--300.

\bibitem[\protect\citeauthoryear{Constantinides and Richard}{Constantinides and
  Richard}{1978}]{ConstantinidesRichard1978}
Constantinides, G.~M. and S.~F. Richard (1978).
\newblock Existence of optimal simple policies for discounted-cost inventory
  and cash management in continuous time.
\newblock {\em Oper. Res.\/}~{\em 26\/}(4), 620--636.

\bibitem[\protect\citeauthoryear{Dai and Yao}{Dai and Yao}{2013a}]{DaiYao2013a}
Dai, J.~G. and D.~Yao (2013a).
\newblock Brownian inventory models with convex holding cost, part 1:
  Average-optimal controls.
\newblock {\em Stochastic Systems\/}~{\em 3\/}(0), 1--58.

\bibitem[\protect\citeauthoryear{Dai and Yao}{Dai and Yao}{2013b}]{DaiYao2013b}
Dai, J.~G. and D.~Yao (2013b).
\newblock Brownian inventory models with convex holding cost, part 2:
  Discount-optimal controls.
\newblock {\em Stochastic Systems\/}~{\em 3\/}(0), 1--74.

\bibitem[\protect\citeauthoryear{Davis}{Davis}{1993}]{Davis1993}
Davis, M. H.~A. (1993).
\newblock {\em Markov Models and Optimization.}
\newblock Chapman and Hall, London.

\bibitem[\protect\citeauthoryear{Dynkin}{Dynkin}{1956}]{Dynkin1956}
Dynkin, E.~B. (1956).
\newblock Infinitesimal operators of markov processes.
\newblock {\em Teor. Veroyatnost. i Primenen.\/}~{\em 1}, 38--60.

\bibitem[\protect\citeauthoryear{Harrison and Reiman}{Harrison and
  Reiman}{1981}]{HarrisonReiman1981}
Harrison, J.~M. and M.~I. Reiman (1981).
\newblock Reflected {B}rownian motion on an orthant.
\newblock {\em Ann. Probab.\/}~{\em 9\/}(2), 302--308.

\bibitem[\protect\citeauthoryear{Harrison, Sellke, and Taylor}{Harrison
  et~al.}{1983}]{HST1983}
Harrison, J.~M., T.~M. Sellke, and A.~J. Taylor (1983).
\newblock Impulse control of {B}rownian motion.
\newblock {\em Math. Oper. Res.\/}~{\em 8\/}(3), 454--466.

\bibitem[\protect\citeauthoryear{Harrison and Taksar}{Harrison and
  Taksar}{1978}]{HarrisonTaksar1978}
Harrison, J.~M. and M.~I. Taksar (1978).
\newblock Optimal control of {B}rownian motion storage system.
\newblock {\em Stochastic Processes and Their Applications\/}~{\em 6},
  179--194.

\bibitem[\protect\citeauthoryear{Harrison and Taksar}{Harrison and
  Taksar}{1983}]{HarrisonTaksar1983}
Harrison, J.~M. and M.~I. Taksar (1983).
\newblock Instantaneous control of {B}rownian motion.
\newblock {\em Math. Oper. Res.\/}~{\em 8\/}(3), 439--453.

\bibitem[\protect\citeauthoryear{Murota}{Murota}{2005}]{Murota2005}
Murota, K. (2005).
\newblock Note on multimodularity and {L}-convexity.
\newblock {\em Math. Oper. Res.\/}~{\em 30\/}(3), 658--661.

\bibitem[\protect\citeauthoryear{{\O}ksendal and Sulem}{{\O}ksendal and
  Sulem}{2009}]{OksendalSulem2009}
{\O}ksendal, B. and A.~Sulem (2009).
\newblock {\em Applied Stochastic Control of Jump Diffusions\/} (3rd ed.).
\newblock Springer-Verlag.

\bibitem[\protect\citeauthoryear{Ormeci, Dai, and Vande~Vate}{Ormeci
  et~al.}{2008}]{ODV2008}
Ormeci, M., J.~G. Dai, and J.~Vande~Vate (2008).
\newblock Impulse control of {B}rownian motion: The constrained average cost
  case.
\newblock {\em Oper. Res.\/}~{\em 56\/}(3), 618--629.

\bibitem[\protect\citeauthoryear{Pang, Chen, and Feng}{Pang
  et~al.}{2012}]{PCF2012}
Pang, Z., F.~Y. Chen, and Y.~Feng (2012).
\newblock Technical note-a note on the structure of joint inventory-pricing
  control with leadtimes.
\newblock {\em Oper. Res.\/}~{\em 60\/}(3), 581--587.

\bibitem[\protect\citeauthoryear{Puterman}{Puterman}{1994}]{Puterman1994}
Puterman, M.~L. (1994).
\newblock {\em Markov decision processes: discrete stochastic dynamic
  programming}.
\newblock Wiley Series in Probability and Mathematical Statistics: Applied
  Probability and Statistics. New York: John Wiley \& Sons Inc.

\bibitem[\protect\citeauthoryear{Reiman}{Reiman}{1984}]{Reiman1984}
Reiman, M.~I. (1984).
\newblock Open queueing networks in heavy traffic.
\newblock {\em Math. Oper. Res.\/}~{\em 9\/}(3), 441--458.

\bibitem[\protect\citeauthoryear{Skorokhod}{Skorokhod}{1961}]{Skorokhod1961}
Skorokhod, A. (1961).
\newblock Stochastic equations for diffusion processes in a bounded region 1.
\newblock {\em Teor. Veroyatnost. i Primenen.\/}~{\em 6}, 264--274.

\bibitem[\protect\citeauthoryear{Skorokhod}{Skorokhod}{1962}]{Skorokhod1962}
Skorokhod, A. (1962).
\newblock Stochastic equations for diffusion processes in a bounded region 2.
\newblock {\em Teor. Veroyatnost. i Primenen.\/}~{\em 7}, 3--23.

\bibitem[\protect\citeauthoryear{Taksar}{Taksar}{1985}]{Taksar1985}
Taksar, M.~I. (1985).
\newblock Average optimal singular control and a related stopping problem.
\newblock {\em Math. Oper. Res.\/}~{\em 10\/}(1), 63--81.

\bibitem[\protect\citeauthoryear{Whitt}{Whitt}{2002}]{Whitt2002}
Whitt, W. (2002).
\newblock {\em Stochastic-process limits}.
\newblock Springer Series in Operations Research. New York: Springer-Verlag.
\newblock An introduction to stochastic-process limits and their application to
  queues.

\bibitem[\protect\citeauthoryear{Zipkin}{Zipkin}{2008}]{Zipkin2008}
Zipkin, P. (2008).
\newblock On the structure of lost-sales inventory models.
\newblock {\em Oper. Res.\/}~{\em 56\/}(4), 937--944.

\end{thebibliography}

\newpage

\section*{Appendix}

\begin{proof}[Proof of Proposition~\ref{prop:Lips-con of C^*}]
By the definition of $C^*(\arp)$, for any $\epsilon>0$, we can find a policy $\pi$ such that
$C(\arp,\pi) \leq C^*(\arp)+\epsilon$.
We apply the same policy $\pi$ to the state $\arp'$ and denote $\arp_t$ and $\arp'_t$ to be the states under $\pi$ with initial state $\arp$ and $\arp'$, respectively.
\begin{eqnarray*}
 C^*(\arp')-C^*(\arp)-\epsilon & \leq &  C(\arp',\pi)-C(\arp,\pi) 
   = \E\left[
    \int_0^{\infty}e^{-\gamma t}[h(\arp'_t(0))-h(\arp_t(0))]dt
       \right]\\
    &\leq &M\int_0^{\infty}e^{-\gamma t}  [(\arp'(t)-\arp(t)]dt=Md(\arp,\arp').
    \end{eqnarray*}
By symmetry, we also have $C^*(\arp)-C^*(\arp')-\epsilon \leq M d(\arp',\arp)$.
Letting $\epsilon \to 0$, we have that $C^*(\arp)$ is Lipschitz continuous.
\end{proof}

\begin{proof}[Proof of Proposition~\ref{prop:midstep-convex}]
For any given state ${\mathbf x}=(x_0,x_1,\cdots,x_{n-1})$, if we let $x_n=x_{n-1}+\up y$, we can rewrite
\begin{eqnarray*}
c^{T,n}_t(\mathbf{x},\up y_,\down y)&=& \up k x_{n} +\down k \down y-\up k x_{n-1} \\
&&+\alpha \E\left[C^{T,n}_{t+1}((x_{1},x_{2},\cdots,x_{n-1},x_{n})-\down y\e+w_t \e)+h^n(x_{0}-\down y)\right] 
\end{eqnarray*} and view $c^{T,n}_t(\mathbf{x},\up y_,\down y)$ as a function of $(\mathbf{x}_,x_{n},\down y)$. 
We next show by induction that $c^{T,n}_t(\mathbf{x}_,\up y_,\down y)$ is $L^\natural$-convex in $(\mathbf{x}_,x_{n},\down y)$ and $C^{T,n}_t(\mathbf{x})$ is $L^\natural$-convex in $\mathbf{x}$ simultaneously.  

Since $h^n(x)$ is convex, $C^{T,n}_T(\mathbf{x})$ is $0$ and hence $L^\natural$-convex in $\mathbf{x}$.
Assuming that $C^{T,n}_{t+1}(\mathbf{x})$ is  $L^\natural$-convex in $\mathbf{x}$.
Since $h^n(\cdot)$ is convex and $C^{T,n}_{t+1}((x_{1},x_{2},\cdots,x_{n-1},x_{n})-\down y \e+w_t \e)$ is $L^\natural$-convex in $(x_1,\cdots,x_n,\down y)$ for a given $w_t$, by Lemma~1 in \cite{Zipkin2008}, $c^{T,n}_{t}(\mathbf{x},\up y,\down y)$ is $L^\natural$-convex in $(\mathbf{x},x_n,\down y)$ as $L^\natural$-convexity is preserved by expectation. 
Thus,
\begin{equation*}
C^{T,n}_t(\mathbf{x})
 =\min_{x_n \geq x_{n-1},\down y \ge 0} \left\{c^{T,n}_{t}(\mathbf{x},\up y,\down y)\right\}
 =\min_{x_n \geq x_{n-1}} 
  \left\{\min_{\down y \ge 0}
  \left\{c^{T,n}_{t}(\mathbf{x},\up y,\down y)\right\}
  \right\}
\end{equation*}
is $L^\natural$-convex in $\mathbf{x}$ by Lemma~2 in \cite{Zipkin2008} as minimization over a
sublattice preserves $L^\natural$-convexity. 
\end{proof}

\begin{proof}[Proof of Lemma~\ref{prop:mono-boundary-con}] \
\begin{enumerate}
\item Since $\phi(\up\xi, \down\xi)$ is a linear function of $(\up \xi,\down \xi)$, we only need to show the monotonicity of $\frac{\partial C^*(\Phi_{\up\xi,\down\xi}(\arp))}{\partial \up \xi}$.
For any $\epsilon >0$ and  $\arp_1 \preceq \arp_2$ where $\arp_1(\lt)=\arp_2(\lt)$, $\Phi_{\up\xi+\epsilon,\down\xi}(\arp_1)\vee \Phi_{\up\xi,\down\xi}(\arp_2)=\Phi_{\up\xi+\epsilon,\down\xi}(\arp_2)$ and $\Phi_{\up\xi+\epsilon,\down\xi}(\arp_1)\wedge \Phi_{\up\xi,\down\xi}(\arp_2)=\Phi_{\up\xi,\down\xi}(\arp_1)$.
Since $C^*(\arp)$ is $L^\natural$-convex, letting $\xi_1=\xi_2=0$ and $F=C^*$ in Definition~\ref{def:L-natural-convexity}, we have
  \begin{equation*}
    C^*(\Phi_{\up\xi+\epsilon,\down\xi}(\arp_1))+C^*(\Phi_{\up\xi,\down\xi}(\arp_2))
    \geq 
    C^*({\Phi_{\up\xi+\epsilon,\down\xi}(\arp_2)})+C^*({\Phi_{\up\xi,\down\xi}(\arp_1)}),
  \end{equation*}
 or
  \begin{equation*}
     C^*(\Phi_{\up\xi+\epsilon,\down\xi}(\arp_1))-C^*(\Phi_{\up\xi,\down\xi}(\arp_1))
     \ge 
     C^*({\Phi_{\up\xi+\epsilon,\down\xi}(\arp_2)})-C^*({\Phi_{\up\xi,\down\xi}(\arp_2)}),
  \end{equation*}
  which implies the monotonicity of  $\frac{\partial C^*(\Phi_{\up\xi,\down\xi}(\arp))}{\partial \up \xi}$.

\item
For any $\epsilon,a >0$, letting $F=C^*$, $\arp_1=\Phi_{\up\xi,0}(\arp)$, $\arp_2=\Phi_{\up\xi+\epsilon,0}(\arp)$ and $(\xi_1,\xi_2)=(0,-a)$ in Definition~\ref{def:L-natural-convexity}, we have
 \begin{equation*}
    C^*(\Phi_{\up\xi,\down \xi}(\arp)-0)+C^*(\Phi_{\up\xi+\epsilon,\down \xi}(\arp)-(-a))\geq 
    C^*(\Phi_{\up\xi,\down \xi}(\arp)-(-a))+C^*(\Phi_{\up\xi+\epsilon,\down \xi}(\arp)-0),
  \end{equation*}
which implies $\frac{\partial C^*(\Phi_{\up\xi,\down\xi}(\arp)+a)}{\partial \up \xi} \ge  
\frac{\partial C^*(\Phi_{\up\xi,\down\xi}(\arp))}{\partial \up \xi}$and  the result holds.

\item
For any $\epsilon >0$ and $\arp_1 \succeq \arp_2$, letting $F=C^*$ and $(\xi_1,\xi_2)=(\xi,\xi+\epsilon)$ in Definition~\ref{def:L-natural-convexity}, we have
 \begin{equation*}
    C^*(\arp_1-\xi)+C^*(\arp_2-(\xi+\epsilon))\geq 
    C^*(\arp_2-\xi)+C^*(\arp_1-(\xi+\epsilon)),
  \end{equation*}
which implies  $\frac{\partial C^*(\Phi_{0,\down\xi}(\arp_1))}{\partial \down \xi} \ge  
\frac{\partial C^*(\Phi_{0,\down\xi}(\arp_2))}{\partial \down \xi}$.
Replacing $\arp_1$ and $\arp_2$ by $\Phi_{\up \xi,0}(\arp_1)$ and $\Phi_{\up \xi,0}(\arp_2)$, we have that $\frac{\partial C(\arp,\up \xi,\down \xi)}{\partial\down \xi}$ is increasing in $\arp$.
\end{enumerate}
\end{proof}

\begin{proof}[Proof of equations \eqref{equ:A happens} and \eqref{equ:A^c happens}]
Note that, under the periodic policy $\pi^n$, adjustments can only be  made at $T^n_i=\frac{i\lt}{n}$ and at the amounts $(\xi_i^{n\uparrow},\xi_i^{n\downarrow})$ for $i=0,1,2,\cdots$.
For convenience, we use $T_i$ to represent $T^n_i$ and $(\xi_i^{\uparrow},\xi_i^{\downarrow})$ to represent $(\xi_i^{n\uparrow},\xi_i^{n\downarrow})$ for $i=0,1,2,\cdots$ in this proof.
\begin{itemize}
\item 
On the event $A$, rewrite  $C(\arp',\pi^n)$ as \begin{equation*}
    \E\bigg[
      \int_0^{T_{N(\epsilon)}} e^{-\gamma t}h(\arp'_t(0)) dt 
      +\sum_{i=1}^{N(\epsilon)}e^{-\gamma T_i}\phi(\xi^{\uparrow}_i,\xi^{\downarrow}_i)
    \bigg] 
    +  \E\bigg[
      \int_{T_{N(\epsilon)}}^{\infty} e^{-\gamma t}h(\arp'_t(0)) dt 
      +\sum_{N(\epsilon)+1}^{\infty}e^{-\gamma T_i}\phi(\xi^{\uparrow}_i,\xi^{\downarrow}_i)
    \bigg],
  \end{equation*}
where the second item is  the discounted control cost given initial state $\arp'_{T_{N(\epsilon)}}$ and is thus always larger than or equal to the lower bound $\E\left[e^{-\gamma T_{N(\epsilon)}} C^* \left(\arp'_{T_{N(\epsilon)}}\right)\right]$.
Hence, we have
\begin{align}
   C(\arp',\pi^n)  \geq  \E\bigg[
      \int_0^{T_{N(\epsilon)}} e^{-\gamma t}h(\arp'_t(0)) dt 
      +\sum_{i=1}^{N(\epsilon)}e^{-\gamma T_i}\phi(\xi^{\uparrow}_i,\xi^{\downarrow}_i)
    \bigg] 
     + \E\left[e^{-\gamma T_{N(\epsilon)}} C^*(\arp'_{T_{N(\epsilon)}})\right].
\label{equ:two-part-C}
\end{align}
Note that the optimal cost $C^*(\arp')$  can be written as
\begin{equation}
\label{equ:h-C}
\begin{split}
  C^*(\arp')
  &=  \E\bigg[
      \sum_{i=1}^{N(\epsilon)}[e^{-\gamma T_{i-1}}C^*(\arp'_{T_{i-1}})-e^{-\gamma T_i}C^*(\arp'_{T_i-})]
      \bigg] \\
  &\quad +\E\bigg[
      \sum_{i=0}^{N(\epsilon)}e^{-\gamma T_i}[C^*(\arp'_{T_{i}-})-C^*(\arp'_{T_i})]
      \bigg]
      +\E\left[e^{-\gamma T_{N(\epsilon)}} C^*(\arp'_{T_{N(\epsilon)}})\right].
\end{split}      
\end{equation}
By the optimality condition \eqref{optimality2}, the first term in \eqref{equ:h-C}
is smaller than
\begin{equation}
  \label{eq:C*-h}
  \sum\limits_{i=1}^{N(\epsilon)}\E\left[\int_{T_{i-1}}^{T_{i}} e^{-\gamma t}h(\arp_t'(0)+W_t)
dt\right]
  = \E\left[\int_0^{T_{N(\epsilon)}} e^{-\gamma t}h(\arp_t'(0)+W_t) dt\right].
\end{equation}
By the dynamics \eqref{equ:dynamics} and the definition of $C(\arp,\up \xi,\down
\xi)$, we have $C^*(\arp'_{T_i})=C^*(\Phi_{\xi^{\uparrow}_i,\xi^{\downarrow}_i}(
\arp'_{T_i-}))=C(\arp'_{T_i-},\xi^{\uparrow}_i,\xi^{\downarrow}_i)-
\phi(\xi^{\uparrow}_i,\xi^{\downarrow}_i)$ and the second term in 
\eqref{equ:h-C} can be written as
\begin{equation}
  \label{eq:tech-510-2}
  \E\left[
    \sum_{i=0}^{N(\epsilon)}e^{-\gamma T_i}[C(\arp'_{T_{i}-},0,0)-C(\arp'_{T_i-},\xi^{\uparrow}_i,\xi^{\downarrow}_i)]\right]
  +\E\left[
    \sum_{i=0}^{N(\epsilon)}e^{-\gamma T_i}\phi(\xi^{\uparrow}_i,\xi^{\downarrow}_i)
  \right].
\end{equation}
Let $A$ denote the event where $\{T_{N(\epsilon)} \leq \tau'\}$ and $A^c$ its complement, and $\E_A[X]=\E[X\id{A}]$ for any random variable $X$. 
Then \eqref{eq:tech-510-2} can be bounded from above by 
\begin{equation}
   \E_A\left[
    \sum_{i=0}^{N(\epsilon)}e^{-\gamma T_i}[C(\arp'_{T_{i}-},0,0)-C(\arp'_{T_i-},\xi^{\uparrow}_i,\xi^{\downarrow}_i)]\right] 
  +\E\left[
    \sum_{i=0}^{N(\epsilon)}e^{-\gamma T_i}\phi(\xi^{\uparrow}_i,\xi^{\downarrow}_i)
  \right], 
  \label{4}
\end{equation} after dropping the term $\E_{A^c}[\cdot]$.
Since $C(\arp,0,0)-C(\arp,\up \xi,\down \xi)$ is always non-positive for any $(\arp,\up \xi,\down \xi)$ by Proposition~\ref{prop:mono-boundary-con} and $C(\arp,\up \xi,\down \xi)$ is convex in $\up \xi $ and $\down \xi$, respectively, 
\begin{eqnarray}
 & & C(\arp'_{T_{i}-},0,0)-C(\arp'_{T_i-},\xi^{\uparrow}_i,\xi^{\downarrow}_i)\nonumber\\
  &=&\left[C(\arp'_{T_i-},0,0)-C(\arp'_{T_i-},\xi^{\uparrow}_i,0)\right]
+\left[C(\arp'_{T_{i}-},\xi^{\uparrow}_i,0)-C(\arp'_{T_i-},\xi^{\uparrow}_i,\xi^{\downarrow}_i)\right]\nonumber\\
  &\leq& -\frac{\partial C(\arp'_{T_i-},\xi^{\uparrow}_i,0)}{\partial \up\ja} \xi^{\uparrow}_i
         -\frac{\partial C(\arp'_{T_i-},\xi^{\uparrow}_i,\xi^{\downarrow}_i)}{\partial \down\ja} \xi^{\downarrow}_i\nonumber\\
  &=& -\frac{\partial C(\Phi_{\xi^{\uparrow}_i,0}(\arp'_{T_i-}),0,0)}{\partial \up\ja} \xi^{\uparrow}_i
-\frac{\partial C(\arp'_{T_i},0,0)}{\partial \down\ja} \xi^{\downarrow}_i.
\label{eq:tech-derivative-k0}
\end{eqnarray}

On the event $A$, for any $k \leq N(\epsilon)$, $(W_{T_i},T_i)$ is in the set $(\hat{w}-w,\hat{s}-s)+B(\delta)$.  
Consequently, $(w+W_{T_i},s+T_i)$ is in $(\hat{w},\hat{s})+B(\delta)$. 
Moreover, the cumulative amount of upward  and downward adjustments at time $T_i$ is less than $\delta$, which means $\sum\limits_{i \leq k} \xi^{\uparrow}_i +\sum\limits_{i \leq k}\xi^{\downarrow}_i\le \epsilon \le \delta$. 
By \eqref{equ:dynamics},
{\small  
\begin{align*}
  d(\ts_{\hat{s}}(\arp)+\hat{w},\arp'_{T_i})
  &\leq d(\ts_{\hat{s}}(\arp)+\hat{w},\ts_{(s+T_i)}(\arp)+w+W_t) + \sum\limits_{i
\leq k}\xi^{\uparrow}_i +\sum\limits_{i \leq k}\xi^{\downarrow}_i\\
  &\leq d(\ts_{\hat{s}}(\arp)+\hat{w},\ts_{(s+T_i)}(\arp)+\hat{w}) +    d(\ts_{(s+T_i)}(\arp)+\hat{w},\ts_{(s+T_i)}(\arp)+w+W_t)
+ \delta\\
  &\leq (s+T_i-\hat{s}) \gamma \arp(\lt) +2\delta \leq 3\delta.
\end{align*}}
Similarly, we have $d(\ts_{\hat{s}}(\arp)+\hat{w},\Phi_{\xi^{\uparrow}_i,0}(\arp'_{T_i-}))\le 3\delta$.
Thus, by \eqref{eq:subregion}, we have 
$\frac{\partial C(\Phi_{\xi^{\uparrow}_i,0}(\arp'_{T_i-}),0,0)}{\partial \up\ja} \geq k_0$ and $\frac{\partial C(\arp'_{T_i},0,0)}{\partial \down\ja} \geq k_0$.  
That it, \eqref{eq:tech-derivative-k0} is bounded by $-k_0(\xi^{\uparrow}_i+\xi^{\downarrow}_i)$ on the event $A$.
Consequently, the first term in \eqref{4} is bounded from above by
\begin{equation}
  \label{eq:tail}
    \E_A\bigg[
    \sum_{i=0}^{N(\epsilon)}-e^{-\gamma \delta}k_0 (\up \xi_i+\down \xi_i)
  \bigg]
  \leq  -\prob(A) e^{-\gamma \delta} k_0 \epsilon.
\end{equation}
Plugging \eqref{eq:C*-h}, \eqref{4} and \eqref{eq:tail} into \eqref{equ:h-C}, we have 
\begin{equation*}
   C^*(\arp')  \leq  \E\bigg[
      \int_0^{T_{N(\epsilon)}} e^{-\gamma t}h(\arp'_t(0)) dt 
      +\sum_{i=1}^{N(\epsilon)}e^{-\gamma T_i}\phi(\xi^{\uparrow}_i,\xi^{\downarrow}_i)
      +e^{-\gamma T_{N(\epsilon)}} C^*(\arp'_{T_{N(\epsilon)}})
    \bigg] 
    -\prob(A) e^{-\gamma \delta} k_0 \epsilon.
\end{equation*}
Comparing it with \eqref{equ:two-part-C}, we have
\begin{equation*}
  C(\arp',\pi^n) \geq C^*(\arp') +\prob(A) e^{-\gamma \delta} k_0 \epsilon
              \geq V_{\arp'}(0,0)+ \prob(A) e^{-\gamma \delta} k_0 \epsilon.
\end{equation*}

\item On the event $A^c$, rewrite $C(\arp',\pi^n)$ as
  \begin{equation*}
     \E\bigg[
      \int_0^{\tau'} e^{-\gamma t}h(\arp'_t(0)) dt 
      +\sum_{T_i\leq \tau'}e^{-\gamma T_i}\phi(\xi^{\uparrow}_i,\xi^{\downarrow}_i)
    \bigg] 
    + \E\bigg[
      \int_{\tau'}^{\infty} e^{-\gamma t}h(\arp'_t(0)) dt 
      +\sum_{T_i>\tau'} e^{-\gamma T_i}\phi(\xi^{\uparrow}_i,\xi^{\downarrow}_i)
    \bigg].
  \end{equation*}
Similar to the argument in \eqref{equ:two-part-C}, the second term is greater than $\E\left[e^{-\gamma \tau'} C^*(\arp'_{\tau'})\right]$ . 
Dropping the non-negative item $\E_{A}[\cdot]$ in the expectations, we have
{\small
\begin{align}
  C(\arp',\pi) 
  &\geq  \E_{A^c}\bigg[
          \int_0^{\tau'} e^{-\gamma t}h(\arp'_t(0)) dt 
          +\sum_{T_i\leq \tau'}e^{-\gamma T_i}\phi(\xi^{\uparrow}_i,\xi^{\downarrow}_i)
          \bigg] 
    + \E_{A^c}\left[e^{-\gamma \tau'} C^*(\arp'_{\tau'})\right] \nonumber\\
  &\geq \E_{A^c}\bigg[
      \int_0^{\tau'} e^{-\gamma t}h(\ts_t(\arp')(0)+W_t) dt 
    \bigg] 
  + \E_{A^c}\left[e^{-\gamma \tau'} C^*(\ts_{\tau'}(\arp')+W_{\tau'})\right]- \frac{M \epsilon}{\gamma}.  
  \label{equ:A^C}
\end{align}}The second inequality follows because, on the event $A^c$, the cumulative amount of upward and downward adjustments by the stopping time $\tau'$ is less than $\epsilon$.
Thus, by \eqref{equ:dynamics}, the distance $d(\ts_s(\arp')+W_s,\arp'_s)<\epsilon$ for any $0 \leq s \leq \tau'$.  
By Assumption~\ref{assum:orig-h}, $|h(\arp'_s(0))-h(\ts_s(\arp')(0)+W_s)|
\leq M \epsilon$ and by Proposition~\ref{prop:Lips-con of C^*}, $|C^*(\ts_s(\arp')+W_s)-C^*(\arp'_s)|
\leq \frac{M}{\gamma} d(\ts_s(\arp')+W_s,\arp'_s) < \frac{M}{\gamma} \epsilon$ for any $0 \leq s \leq \tau'$.
For each of the expectation $\E_{A^c}[\cdot]$ in (\ref{equ:A^C}), we can write it as the difference $\E[\cdot]-\E_A[\cdot]$.
Since the process $(W_t,t)$ doesn't go out of $(\hat{w}-w,\hat{s}-s)+B(\delta) $ before the stopping time $\tau'$, the shifted process $(w+W_t,s+t)$ is always in $(\hat{w},\hat{s})+B(\delta)$ for all $0 \le t \le \tau'$.
Then, for the $\E_A[\cdot]$ terms, we have the following bound
\begin{eqnarray}
&&  \E_{A}\left[
      \int_0^{\tau'} e^{-\gamma t}h(\ts_t(\arp')(0)+W_t) dt 
    \right] 
 +\E_{A}\left[e^{-\gamma \tau'} C^*(\ts_{\tau'}(\arp')+W_{\tau'})\right]\nonumber\\
&=& \E_{A}\left[
      \int_0^{\tau'} e^{-\gamma t}h(\ts_{s+t}(\arp)(0)+w+W_t) dt 
    \right] 
 +\E_{A}\left[e^{-\gamma \tau'} V_{\arp}(w+W_{\tau'},s+\tau')\right]\nonumber\\
&\leq& \prob(A) \int_0^{\delta} \bar{h} dt +\prob(A) \bar{V} =\prob(A) (\delta\bar{h}+
\bar{V}),\label{equ:P-A^c-term}
\end{eqnarray}
where $\bar{h}=\sup\limits_{(w,s)\in (\hat{w},\hat{s})+B(\delta)}\{h(\ts_{s}(\arp)(0)+w)\}<\infty$ and $\bar{V}=\sup\limits_{(w,s)\in (\hat{w},\hat{s})+B(\delta)}\{V_{\arp}(w,s)\}<\infty$, all independent of $\tau'$.
This means that $C^*(\ts_{\tau'}(\arp')+W_{\tau'})=V_{\arp}(w+W_{\tau'},s+\tau')
\leq \bar{V}$ and $h(\ts_t(\arp')(0)+W_t)=h(\ts_{s+t}(\arp')(0)+w+W_t)
\leq \bar{h}$ for all $t \leq \tau'$.
Plugging  \eqref{equ:P-A^c-term} into \eqref{equ:A^C}, we have
\begin{eqnarray*}
C(\arp',\pi) &\geq&  \E\left[
      \int_0^{\tau'} e^{-\gamma t}h(\ts_t(\arp')(0)+W_t) dt 
    \right] + \E\left[e^{-\gamma \tau'} C^*(\ts_{\tau'}(\arp')+W_{\tau'})\right]\\
   &&-\frac{M\epsilon}{\gamma}-\prob(A) (\delta\bar{h}+ \bar{V}).
\end{eqnarray*}
Comparing the above with \eqref{eq:o-inequ-small-neighbour}, we have
\begin{eqnarray*}
  C(\arp',\pi) 
  \geq
  V_{\arp'}(0,0) +c_0-\frac{M \epsilon}{\gamma}-\prob(A) (\delta\bar{h}+
\bar{V}).
\end{eqnarray*}
\end{itemize}
\end{proof}

\begin{proof}[Proof of Proposition~\ref{prop:PDE-positive}]
  Since $C^*(\arp)$ is $L^\natural$-convex, the partial derivatives $\frac{\partial V_{\arp}(w,s)}{\partial w}$ and $\frac{\partial^2 V_{\arp}(w,s)}{\partial w^2}$ exist almost everywhere.
  Moreover, $\frac{\partial V_{\arp}(w,s)}{\partial w}=\frac{\partial C*(\ts_s(\arp)+w)}{\partial \down \xi}$. 
  By part 3 of Lemma~\ref{prop:mono-boundary-con}, $\frac{\partial V_{\arp}(w,s)}{\partial w}$ monotone in $s$.
  So the partial derivatives $\frac{\partial^2 V_{\arp}(w,s)}{\partial s \partial w}$ exists  almost everywhere and hence $\frac{\partial V_{\arp}(w,s)}{\partial s}$ exists almost everywhere.

Then by the optimality condition we have
\begin{equation*}
    V_{\arp}(w,s)
    \le \E\left[\int_0^{\spt'} e^{-\gamma t}h(\arp(s+t)+w+W_t) dt\right]
    +\E\big[e^{-\gamma {\spt'}}V_{\arp}(w+W_{\spt'},s+\spt')\big].
  \end{equation*} 
for any stopping time $\tau'$. 
Combining with the existence of above three the partial derivatives, we immediately derive that  \begin{equation*}
    \frac{\partial V_{\arp}(w,s)}{\partial s}
    +\frac{\sigma^2}{2}\frac{\partial^2 V_{\arp}(w,s)}{\partial w^2}
    +\mu\frac{\partial V_{\arp}(w,s)}{\partial w}-\gamma V_{\arp}(w,s)
    +h(\arp(s)+w) \ge 0
  \end{equation*}
holds for almost every $(w,s) \in \R \times \R_+$.
\end{proof}

\begin{proof}[Proof of Proposition~\ref{prop:characterization of the mapping}]
We only prove the result for $\up \psi(\arp,\down Y,\omega)$.
Suppose the above equation does not hold, i.e., there exists $t$ such that $\frac{\partial C(\arp_t,0,0)}{\partial\up \xi}>0$ and $\up \psi$ increases at $t$.

If $\up \psi(t) > \up \psi(t-)$, then there must exist $\epsilon,\delta>0$ such that $\up \psi(t)-\up \psi(t-)>\epsilon$ and, for any $\arp' \in \mathbb D$ that satisfies $\dist_{\gamma}(\arp',\arp_t)<\epsilon+\delta
\frac{\arp_t(\lt)}{\gamma}$ and $\frac{\partial C(\arp',0,0)}{\partial\up \xi}>0$. 
Hence, the following upward adjustment
\begin{equation*}
  Y^{\uparrow'}(u)=\left\{ 
 \begin{array} {ll}
  \up \psi(u)-\epsilon, & u \in [t,t+\delta),\\
  \up \psi(u), & \textrm{ otherwise}
  \end{array}
  \right.
\end{equation*} 
is strictly less than $\up \psi$.
Following a similar argument as in the proof of Proposition~\ref{prop:order and continuous on one-side}, we can show that $Y^{\uparrow'} \in \up \Pi(\arp,Y^{\downarrow},\omega)$, which implies that $\up \psi$ cannot be the infimum, a contradiction.

If $\up \psi(t)=\up \psi(t-)$, there must exist $\epsilon,\delta>0$ such that $\up \psi(s)-\up \psi(s-)>\epsilon$ and $\frac{\partial C(\arp_s,0,0)}{\partial\up \xi}>0$ for $t\leq s \leq t+\delta$.
Then, the following upward adjustment
\begin{equation*}
  Y^{\uparrow'}(u)=\left\{ 
 \begin{array} {ll}
  \up \psi(t), & u \in [t,t+\delta),\\
  \up \psi(u), & \textrm{ otherwise}
  \end{array}
  \right.
\end{equation*} 
is strictly less than $\up \psi$.
Similarly, we can show that $Y^{\uparrow'} \in \up \Pi(\arp,Y^{\downarrow},\omega)$ which implies that $\up \psi$ cannot be the infimum, again a contradiction. 
Thus, the proposition holds.
\end{proof}

\begin{proof}[Proof of Lemma~\ref{lemma:close-2}]

The proof is quite complicated, thus we give a road map. 
Essentially, we prove that, for any fixed $T>0$,
\begin{eqnarray}
  && C^{\delta}(\arp,\pi^*)
     -C^*(\arp^{\delta}_0)+\E\left[e^{-\gamma T}C^{*}(\arp^{\delta}_T)\right]-(2\E N(T)+2)M\delta \nonumber \\
  & &+\E\left[\int_0^{T} e^{-\gamma t}
      \frac{\partial C(\arp_t^{\delta},0,0)}{\partial \up \xi} d Y^{\uparrow*}(t)
      +\int_0^{T}e^{-\gamma t} 
      \frac{\partial C(\arp_t^{\delta},0,0)}{\partial \down \xi} d Y^{\downarrow*}(t)\right]\nonumber \\
  &\leq& \E\left[
         \int_{T}^{\infty} e^{-\gamma t}h(\arp^{\delta}_t(0))dt
         + \up k \int_T^{\infty} e^{-\gamma t} d Y^{\uparrow*}(t)
         + \down k\int_T^{\infty}e^{-\gamma t} d Y^{\downarrow*}(t)
         \right].\label{eq:delta-change-cost-T}
\end{eqnarray}
Once this is proven, let $R_1(\arp,\delta,T)=\E\left[\int_0^{T} e^{-\gamma t}
    \frac{\partial C(\arp_t^{\delta},0,0)}{\partial \up \xi} d Y^{\uparrow*}(t)
   +\int_0^{T}e^{-\gamma t} 
   \frac{\partial C(\arp_t^{\delta},0,0)}{\partial \down \xi} d Y^{\downarrow*}(t)\right]$
and 
    $R_2(T)=\E\left[
    \int_{T}^{\infty} e^{-\gamma t}h(\arp^{\delta}_t(0))dt
    + \up k \int_T^{\infty} e^{-\gamma t} d Y^{\uparrow*}(t)
    + \down k\int_T^{\infty}e^{-\gamma t} d Y^{\downarrow*}(t)
  \right]-\E[e^{-\gamma T}C^*(\arp^{\delta}_T)]$.
Then, \eqref{eq:delta-change-cost-T} becomes
\begin{equation}
\begin{split}
 C^{\delta}(\arp,\pi^*) \leq C^*(\arp^\delta_0)+(2\E N(T)+2)M\delta -R_1(\arp,\delta,T)+R_2(T).
\end{split}
\label{eq:final-delta}
\end{equation}
By \eqref{equ:feasiblecondition} and the Lipschitz continuity of $C^*(\arp)$, we immediately get that $R_2(T) \to 0$ as $T \to \infty$.
For $R_1(\arp,\delta,T)$, it is easy to see that $\arp_t^{\delta} \to \arp_t$ as $\delta \to 0$, so $\frac{\partial C(\arp_t^{\delta},0,0)}{\partial \up \xi}$ converges to $\frac{\partial C(\arp_t,0,0)}{\partial \up \xi}$ by part~1 of Lemma~\ref{prop:mono-boundary-con}.
By the Lebesgue's Dominated Convergence Theorem, the upward adjustment cost
$\E\left[ \int_0^{T}e^{-\gamma t}\frac{\partial C(\arp_t^{\delta},0,0)}{\partial \up \xi}dY^{\uparrow*}(t)\right]$ converges to 
$\E\left[\int_0^{T} e^{-\gamma t} \frac{\partial C(\arp_t,0,0)}{\partial \up \xi} d Y^{\uparrow*}(t) \right]$, which equals to $0$ by Proposition~\ref{prop:characterization of the mapping}.
Similarly, for the downward adjustment cost, we have $\E\left[ \int_0^{T}e^{-\gamma t}\frac{\partial C(\arp_t^{\delta},0,0)}{\partial \down \xi}dY^{\downarrow*}(t)\right]$ converges to 0. 
Thus, $R_1(\arp,\delta,T) \to 0$ as $\delta\to 0$. 
Finally, since $|C^*(\arp)-C^*(\arp_0^{\delta})|\leq M\delta$, the lemma holds.

The remaining of this proof is devote to showing \eqref{eq:delta-change-cost-T}.
To this end, we apply the following double telescoping to  
$C^*(\arp_0^{\delta})-\E\left[ e^{-\gamma T}C^*(\arp^{\delta}_T)\right]$ in order to approximate $C^{\delta}(\arp,\pi^*)$.

\begin{enumerate}
\item 
In the first telescoping, we write $C^*(\arp_0^{\delta})-\E\left[ e^{-\gamma T}C^*(\arp^{\delta}_T)\right]$ according to the partition of the interval $[0,T]$ by  $0=\tau^{\delta}_{0}<\tau^{\delta}_{1}<\ldots<\tau^{\delta}_{N(T)} \le T$. 
\begin{eqnarray}
&& C^*(\arp_0^{\delta})-\E\left[ e^{-\gamma T}C^*(\arp^{\delta}_T)\right]
     \nonumber\\
&=& \E\sum_{k=1}^{N(T)} 
     \left[
     e^{-\gamma {\tau^{\delta}_{k-1}}}C^{*}\left(\arp^{\delta}_{\tau^{\delta}_{k-1}}\right)
     -e^{-\gamma {\tau^{\delta}_{k}}}C^{*}\left(\arp^{\delta}_{\tau^{\delta}_{k}}\right)
     \right]
     + \E\left[
     e^{-\gamma {\tau^{\delta}_{N(T)}}}C^{*}\left(\arp^{\delta}_{\tau^{\delta}_{N(T)}}\right)
     -e^{-\gamma T}C^{*}\left(\arp^{\delta}_{T}\right)
     \right]\nonumber\\ 
&=&\E\sum_{k=1}^{N(T)} \left[
   \E\left[ 
   e^{-\gamma {\tau^{\delta}_{k-1}}}C^{*}\left(\arp^{\delta}_{\tau^{\delta}_{k-1}}\right)
   -e^{-\gamma \tau^{\delta}_{k}}C^{*}\left(\arp^{\delta}_{\tau^{\delta}_{k}}\right)
   \left. \right|\mathscr{F}_{\tau^{\delta}_{k-1}}\right]
   \right] \label{eq:telescope-1}\\
  && +\E\left[
     \E\left[ \left.
     e^{-\gamma {\tau^{\delta}_{N(T)}}}C^{*}\left(\arp^{\delta}_{\tau^{\delta}_{N(T)}}\right)
    -e^{-\gamma T}C^{*}\left(\arp^{\delta}_{T}\right)
    \right| \mathscr{F}_{\tau^{\delta}_{N(T)}}\right] 
    \right]. \label{eq:telescope-tail}
\end{eqnarray}

\item
Next, we examine all the terms in \eqref{eq:telescope-1} and \eqref{eq:telescope-tail} and apply a sub-telescoping on each of them. 
We construct a partition of the interval $\left[\tau^{\delta}_{k-1},\tau^{\delta}_{k}\right]$ by  $\tau^{\delta}_{k-1}=\iota_{k,0}<\iota_{k,1}<\ldots<\iota_{k,j_k}=\tau^{\delta}_{k}$ for any $0<\epsilon<\delta$ where
\begin{eqnarray*}
   \hat{\iota}_{k,j} &=&
   \inf\left\{u:u > \iota_{k,j-1},
 (Y^{\uparrow*}(u)-Y^{\uparrow*}(\iota_{k,j-1})) \vee(Y^{\downarrow*}(u)-Y^{\downarrow*}(\iota_{k,j-1}))
   \geq \frac{\epsilon}{2} \right\},\\
  \iota_{k,j} &=& \hat{\iota}_{k,j} \wedge  (\iota_{k,j-1}+\epsilon)\wedge \tau^{\delta}_{k+1}, 
\end{eqnarray*} 
for $j=1,2,\cdots,j_k$. 
It's obvious that $j_k$ is almost surely finite. 
We define $Y^{\uparrow \epsilon}$ and $Y^{\downarrow \epsilon}$ piece-wisely on the interval $\left[\tau^{\delta}_{k-1},\tau^{\delta}_k \right]$ as
$Y^{\uparrow \epsilon}(u)=Y^{\uparrow*}(\iota_{k,j})$ and $Y^{\downarrow \epsilon}(u)=Y^{\downarrow*}(\iota_{k,j})$ for $\iota_{k,j} \leq u < \iota_{k,j+1}$.
It is obvious that they are step functions with jump sizes bounded by $\frac{\epsilon}{2}$. 
Let $\arp_t^{\epsilon}$ be the state at time $t$ under policy $(Y^{\uparrow\epsilon},Y^{\downarrow\epsilon})$ with the initial profile $\arp$ and define
\begin{eqnarray*}
  \arp^{\delta,\epsilon}_t = \left\{
  \begin{array}{ll}
    \arp^{\epsilon}_t-\delta, & \textrm{ if }\ t< \tau^{\delta}_1,\\
    \arp^{\epsilon}_t+\delta, & \textrm{ if }\ \tau^{\delta}_{2j-1}\leq t <\tau^{\delta}_{2j},\\
    \arp^{\epsilon}_t-\delta, & \textrm{ if }\ \tau^{\delta}_{2j}\leq t <\tau^{\delta}_{2j+1}.
  \end{array}
  \right.
\end{eqnarray*}
\end{enumerate}

For $k=1,2,\cdots,N(t)$, based on the second step of telescoping, we estimate \eqref{eq:telescope-1} as 
\begin{align}
 & \E\left[\E \left[ \left.
   e^{-\gamma \tau^{\delta}_{k-1}}C^{*}\left(\arp^{\delta}_{\tau^{\delta}_{k-1}}\right)
  -e^{-\gamma \tau^{\delta}_{k}}C^{*}\left(\arp^{\delta}_{\tau^{\delta}_{k}}\right)
    \right|\mathscr{F}_{\tau^{\delta}_{k-1}}\right]\right] \nonumber\\
=& \E\left[e^{-\gamma \tau^{\delta}_{k-1}}
   \left[ C^{*}\left(\arp^{\delta}_{\tau^{\delta}_{k-1}}\right)-C^{*}\left(\arp^{\delta,\epsilon}_{\tau^{\delta}_{k-1}}\right)\right]\right]
    -\E\left[ \left.
   e^{-\gamma \tau^{\delta}_{k}}         
\left[C^{*}\left(\arp^{\delta}_{\tau^{\delta}_{k}}\right)-C^{*}\left(\arp^{\delta,\epsilon}_{\tau^{\delta}_{k}}\right)\right]
   \right|\mathscr{F}_{\tau^{\delta}_{k-1}}\right] \nonumber\\
 & +\E\left[\E\left[ \left.
   e^{-\gamma \tau^{\delta}_{k-1}}C^{*}\left(\arp^{\delta,\epsilon}_{\tau^{\delta}_{k-1}}\right)
  -e^{-\gamma \tau^{\delta}_{k}}C^{*}\left(\arp^{\delta,\epsilon}_{\tau^{\delta}_{k}}\right)
  \right|\mathscr{F}_{\tau^{\delta}_{k-1}}\right]\right] \nonumber\\
\geq& -2M\epsilon +\E\left[\E\left[ \left.
   e^{-\gamma \tau^{\delta}_{k-1}}C^{*}\left(\arp^{\delta,\epsilon}_{\tau^{\delta}_{k-1}}\right)
  -e^{-\gamma \tau^{\delta}_{k}}C^{*}\left(\arp^{\delta,\epsilon}_{\tau^{\delta}_{k}}\right)
  \right|\mathscr{F}_{\tau^{\delta}_{k-1}}\right]\right] \nonumber\\
\begin{split} 
= &-2M \epsilon + \E\left[\E\left[ \left.
  \sum_{j=1}^{j_k}
  e^{-\gamma \iota_{k,j-1}}C^{*}\left(\arp^{\delta,\epsilon}_{\iota_{k,j-1}}\right)
 -e^{-\gamma \iota_{k,j}}C^{*}\left(\arp^{\delta,\epsilon}_{\iota_{k,j}-}\right)
\right|\mathscr{F}_{\tau^{\delta}_{k-1}}\right]\right] \\
  &+\E \left[ \sum_{j=1}^{j_k}
  e^{-\gamma \iota_{k,j}}C^{*}\left(\arp^{\delta,\epsilon}_{\iota_{k,j}}\right)
 -e^{-\gamma \iota_{k,j}}C^{*}\left(\arp^{\delta,\epsilon}_{\iota_{k,j}-}\right)
   \right].
\end{split}\label{eq:telescope-2}
\end{align} 
The last equality follows as a result of telescoping on the partition $\tau^{\delta}_{k-1}=\iota_{k,0}<\iota_{k,1}<\ldots<\iota_{k,j_k}=\tau^{\delta}_{k}$. 
Since there is no upward or downward adjustment during $[\iota_{k,j-1},\iota_{k,j})$, the second term in \eqref{eq:telescope-2} becomes 
\begin{align*}
&  \E\left[\E\left[ \left.\sum_{j=1}^{j_k}
     e^{-\gamma \iota_{k,j-1}} 
    \E\left[ \left.
     C^{*}\left(\arp^{\delta,\epsilon}_{\iota_{k,j-1}}\right)
    -e^{-\gamma (\iota_{k,j}-\iota_{k,j-1})}C^{*}\left(\arp^{\delta,\epsilon}_{\iota_{k,j}-}\right)
    \right|\mathscr{F}_{\iota_{k,j-1}}\right]
    \right|\mathscr{F}_{\tau^{\delta}_{k-1}}\right]\right]\\
=&\E\left[\E\left[ \left. \sum_{j=1}^{j_k}
     e^{-\gamma \iota_{k,j-1}} 
    \E\left[ \left.
    V_{\arp^{\delta\epsilon}_{\iota_{k,j-1}}}(0,0)-e^{-\gamma (\iota_{k,j}-\iota_{k,j-1})}
   V_{\arp^{\delta\epsilon}_{\iota_{k,j-1}}}(W_{\iota_{k,j}}-W_{\iota_{k,j-1}},\iota_{k,j}-\iota_{k,j-1})
    \right|\mathscr{F}_{\iota_{k,j-1}}\right]
    \right|\mathscr{F}_{\tau^{\delta}_{k-1}}\right]\right] \\
=& \E\left[\E\left[ \left. \sum_{j=1}^{j_k}
     e^{-\gamma \iota_{k,j-1}}
   \E\left[ \left.
    \int_0^{\iota_{k,j}-\iota_{k,j-1}} 
   e^{-\gamma u}h\left(\arp^{\delta,\epsilon}_{\iota_{k,j-1}}(u)+W_u\right)du
    \right|\mathscr{F}_{\iota_{k,j-1}}\right]
    \right|\mathscr{F}_{\tau^{\delta}_{k-1}}\right]\right]\\
=& \E\left[\E\left[ \left. \sum_{j=1}^{j_k}
    \int_{\iota_{k,j-1}}^{\iota_{k,j}} 
   e^{-\gamma u}h\left(\arp^{\delta,\epsilon}_{u}(0) \right) du
    \right|\mathscr{F}_{\tau^{\delta}_{k-1}}\right]\right]\\
=& \E\left[
    \int_{\tau^{\delta}_{k-1}}^{\tau^{\delta}_{k}} 
   e^{-\gamma u}h\left(\arp^{\delta,\epsilon}_{u}(0)\right) du
    \right]
   \to 
   \E\left[
   \int_{\tau^{\delta}_{k-1}}^{\tau^{\delta}_{k}} e^{-\gamma u}h(\arp^{\delta}_u(0))
    du \right] \quad \mbox{as $\epsilon \to 0$}.  
\end{align*}
By the definition of $\arp^{\delta}_t$ and $\arp^{\delta,\epsilon}_t$, we have $\arp^{\delta,\epsilon}_t\in\Xi$ for any $t \ge 0$, which allows us to apply Theorem~\ref{thm:PDE} and Corollary~\ref{cor:general-ine-pde} to the second equality with $\arp'=\arp^{\delta,\epsilon}_{\iota_{k,j-1}}$ as an initial state. 
Since $\arp^{\delta,\epsilon}_{u}(0) \to \arp^{\delta}_{u}(0)$ and $h\left(\arp^{\delta,\epsilon}_{u}(0)\right)$ is dominated by $h(\arp^{\delta}_{t}(0))+M\delta$
as $\epsilon \to 0$, convergence is established by the Lebesgue's Dominated Convergence Theorem.

Denote $\up \Delta_{k,j}=Y^{\uparrow\epsilon}(\iota_{k,j})-Y^{\uparrow\epsilon}(\iota_{k,j}-)$ and $\down \Delta_{k,j}=Y^{\downarrow\epsilon}(\iota_{k,j})-Y^{\downarrow\epsilon}(\iota_{k,j}-)$. Then, the third term in \eqref{eq:telescope-2} can be written as 
\begin{align}
\begin{split}
& \E\left[ \sum_{j=1}^{j_k}
     e^{-\gamma \iota_{k,j}}C\left(\arp^{\delta,\epsilon}_{\iota_{k,j}-},\up \Delta_{k,j},\down \Delta_{k,j}\right)
    -e^{-\gamma \iota_{k,j}}C\left(\arp^{\delta,\epsilon}_{\iota_{k,j}-},0,0\right)
    \right]\\
& +\E\left[ \sum_{j=1}^{j_k} e^{-\gamma \iota_{k,j}}
    \phi\left(\up \Delta_{k,j},\down \Delta_{k,j}\right)      
     \right]
  +\E\left[e^{-\gamma \iota_{k,j}}
    \left[C^{*}\left(\arp^{\delta,\epsilon}_{\iota_{k,j_k}}\right)
    -C^{*}\left(\arp^{\delta,\epsilon}_{\iota_{k,j_k}}+(-1)^{k}2\delta\right)\right]
    \right]
\end{split} \label{eq:controlcost}\\
\begin{split}
\geq &\E\left[ \sum_{j=1}^{j_k}
     e^{-\gamma \iota_{k,j}}C\left(\arp^{\delta,\epsilon}_{\iota_{k,j}-},\up \Delta_{k,j},\down
\Delta_{k,j}\right)
    -e^{-\gamma \iota_{k,j}}C\left(\arp^{\delta,\epsilon}_{\iota_{k,j}-},0,0\right)
    \right] \\
  &+\E\left[ \sum_{j=1}^{j_k} e^{-\gamma \iota_{k,j}}
    \phi\left(\up \Delta_{k,j},\down \Delta_{k,j}\right)      
     \right]-2M \delta
\end{split}
    \label{eq:telescope-2-tail}
\end{align}
where the last term in \eqref{eq:controlcost} is due to the fact that, in addition to the jumps $(\up \Delta_{k,j_k},\down \Delta_{k,j_k})$, $\arp^{\delta,\epsilon}_t$ also includes the jump caused by $\delta$ at $\iota_{k,j_k}=\tau^{\delta}_k$. 
The second term in \eqref{eq:telescope-2-tail} is the total discounted ordering cost under policy $(Y^{\uparrow\epsilon},Y^{\downarrow\epsilon})$ and will converge to 
$\E\left[\up k  \int_{\tau^{\delta}_{k-1}}^{\tau^{\delta}_{k}} e^{-\gamma
t} d Y^{\uparrow*}(t)\right]
+\E\left[\down k  \int_{\tau^{\delta}_{k-1}}^{\tau^{\delta}_{k}}e^{-\gamma
t} d Y^{\downarrow*}(t)\right]$. 
The first term in \eqref{eq:telescope-2-tail} can be written as follows for some $(u_1(\omega),u_2(\omega)) \in [0,\frac{\epsilon}{2}] \times [0,\frac{\epsilon}{2}]$, which is also a discrete Riemann sum of an integral
\begin{align*}
 &  \E\left[ \sum_{j=1}^{j_k} e^{-\gamma \iota_{k,j}}
   \left(\frac{\partial C\left(\arp^{\delta,\epsilon}_{\iota_{k,j}-},u_1(\omega),u_2(\omega)\right)}{\partial \up\ja} \up \Delta_{k,j}
  +\frac{\partial C\left(\arp^{\delta,\epsilon}_{\iota_{k,j}-},u_1(\omega),u_2(\omega)\right)}{\partial \down \ja} 
   \down \Delta_{k,j}\right)
   \right] \\
& \to
   \E\left[\int_{\tau^{\delta}_{k-1}}^{\tau^{\delta}_{k}} e^{-\gamma t} 
     \frac{\partial C(\arp_t^{\delta},0,0)}{\partial \up \xi} d Y^{\uparrow*}(t)
     +\int_{\tau^{\delta}_{k-1}}^{\tau^{\delta}_{k}} e^{-\gamma t}
     \frac{\partial C(\arp_t^{\delta},0,0)}{\partial \down \xi} d Y^{\downarrow*}(t)\right]
\end{align*}
because $\max\limits_{j=1,2,\cdots,j_k} \Delta_{k,j} \to 0$ as  $\epsilon \to 0$.
Letting $\epsilon \to 0$, each term in \eqref{eq:telescope-1} is greater than
\begin{equation}
\label{eq:telescope-1-all}
\begin{split}
 & -2M\delta
   +\E\left[\int_{\tau^{\delta}_{k-1}}^{\tau^{\delta}_{k}} e^{-\gamma t} 
     \frac{\partial C(\arp_t^{\delta},0,0)}{\partial \up \xi} 
   d Y^{\uparrow*}(t)
     +\int_{\tau^{\delta}_{k-1}}^{\tau^{\delta}_{k}} e^{-\gamma t}
     \frac{\partial C(\arp_t^{\delta},0,0)}{\partial \down \xi} 
   d Y^{\downarrow*}(t)\right]\\
 & +\E\left[
   \int_{\tau^{\delta}_{k-1}}^{\tau^{\delta}_{k}} e^{-\gamma u}h(\arp^{\delta}_u(0)) du
   \right]
  +\E\left[\up k  \int_{\tau^{\delta}_{k-1}}^{\tau^{\delta}_{k}} e^{-\gamma t} 
    d Y^{\uparrow*}(t)\right]
  +\E\left[\down k  \int_{\tau^{\delta}_{k-1}}^{\tau^{\delta}_{k}}e^{-\gamma t} 
   d Y^{\downarrow*}(t)\right].
\end{split}
\end{equation}
Following the same argument, \eqref{eq:telescope-tail} is greater than
\begin{equation}
\label{eq:telescope-1-all-tail}
\begin{split}
 & -2M\delta
   +\E\left[\int_{\tau^{\delta}_{N(T)}}^{T} e^{-\gamma t} 
     \frac{\partial C(\arp_t^{\delta},0,0)}{\partial \up \xi} 
     d Y^{\uparrow*}(t)
     +\int_{\tau^{\delta}_{N(T)}}^{T} e^{-\gamma t}
     \frac{\partial C(\arp_t^{\delta},0,0)}{\partial \down \xi} d Y^{\downarrow*}(t)\right]\\
 & +\E\left[
   \int_{\tau^{\delta}_{N(T)}}^{T} e^{-\gamma u}h(\arp^{\delta}_u(0)) du
   \right]
  +\E\left[\up k  \int_{\tau^{\delta}_{N(T)}}^{T} e^{-\gamma t} 
    d Y^{\uparrow*}(t)\right]
  +\E\left[\down k  \int_{\tau^{\delta}_{N(T)}}^{T}e^{-\gamma t} 
   d Y^{\downarrow*}(t)\right].
\end{split}
\end{equation}
Plugging \eqref{eq:telescope-1-all} and \eqref{eq:telescope-1-all-tail} into \eqref{eq:telescope-1} and \eqref{eq:telescope-tail}, we have that
\begin{align*} 
& C^*(\arp^{\delta}_0)-\E\left[e^{-\gamma T}C^{*}(\arp^{\delta}_T)\right]-\E[2N(T)+2]M\delta\\
\geq& \E\left[
   \int_{0}^{T} e^{-\gamma u}h(\arp^{\delta}_u(0)) du
   \right]
   +\up k \E\left[\int_0^{T} e^{-\gamma t} d Y^{\uparrow*}(t)\right]
   +\down k \E\left[\int_0^{T}e^{-\gamma t} d Y^{\downarrow*}(t)\right]\\
  &+\E\left[\int_0^{T} e^{-\gamma t} 
    \frac{\partial C(\arp_t^{\delta},0,0)}{\partial \up \xi} d Y^{\uparrow*}(t)
   +\int_0^{T} e^{-\gamma t}
    \frac{\partial C(\arp_t^{\delta},0,0)}{\partial \down \xi} d Y^{\downarrow*}(t)\right].
\end{align*}
Combining the above with the cost function $C^{\delta}(\arp,\pi^*)$ defined in \eqref{eq:C-mod}, we have  \eqref{eq:delta-change-cost-T}.
\end{proof}

\begin{proof}[Proof of Proposition~\ref{prop:costequivalence}]
It follows as
\begin{eqnarray*}
   && \E \left[\int_{\down\lt}^{\infty}e^{-\gamma t} h(H_t)dt \right]=\E\left[\int_{0}^{\infty}e^{-\gamma (t+\down\lt)} h(H_{t+\down\lt}) dt\right]\\
    &=& \E\left[\int_{0}^{\infty}e^{-\gamma (t+\down\lt)} h(H_{0}+W_{t+\down\lt}+\up{\arp}_{0}(t+\down\lt)-\down{\arp}_{0}(t+\down\lt)+\up{Y}(t+\down\lt-\up\lt)-\down{Y}(t)) dt\right]\\
    &=& \E\left[\int_{0}^{\infty}e^{-\gamma (t+\down\lt)} h(W_{t+\down\lt}-W_{t}+W_{t}+H_{0}+\up{\arp}_{0}(t+\down\lt)-\down{\arp}_{0}(\down\lt)+\up{Y}(t-\lt)-\down{Y}(t)) dt\right]\\
    &=& \E\left[\int_{0}^{\infty}e^{-\gamma (t+\down\lt)} h(W_{t+\down\lt}-W_{t}+\arp_0(t)+\up Y(t-\lt)-\down Y (t))dt\right]\\
    &=& \E\left[\int_{0}^{\infty}e^{-\gamma (t+\down\lt)}
     \E\left[\left. h(W_{t+\down\lt}-W_{t}+\arp_t(0)) \right|\arp_t(0) dt
     \right] \right]\\
    &=& \E\left[\int_0^{\infty}e^{-\gamma t} 
   \E\left[e^{-\gamma \down{\lt}}h(\arp_t(0)+\mathcal{N}_{\down\lt})dt
   \right]\right]\\
    &=& \E\left[\int_{0}^{\infty}e^{-\gamma t} \tilde{h}(\arp_t(0)) dt\right].
  \end{eqnarray*}
The cost difference $\E \left[\int_0^{\down{\lt}}e^{-\gamma t}{h}(H_t)
dt \right]$ is a constant because, for $t \leq \down\lt$,
  \begin{align*}
    H_t =H_0 + W_t + \up{\arp}_0(t) - \down{\arp}_0(t) + \up Y(t-\down{\lt}) + \down Y(t-\up{\lt})
    =H_0 + W_t + \up{\arp}_0(t) - \down{\arp}_0(t).
  \end{align*}
\end{proof}
\end{document}